\numberwithin{equation}{section}
\theoremstyle{plain}
\newtheorem{thm}{Theorem}[section]
\newtheorem{theorem}[thm]{Theorem}
\newtheorem{lemma}[thm]{Lemma}
\newtheorem{prop}[thm]{Proposition}
\newtheorem{defn}[thm]{Definition}
\newtheorem{remark}[thm]{Remark}
\theoremstyle{remark}
\def \be{\begin{equs}}
\def \ee{\end{equs}}
\def \I{\mathcal{I}}
\def \E{\mathbb{E}_0}
\def \tn {\sqrt{2\log{n}}}
\def \ihat {\hat{I}}
\def \ihatk {\ihat(k_0,k_*)}
\def \Ik {I(k_0,k_*)}
\def \k0 {k_0}
\def \kstar {k_*}
\def \k1 {k_1}
\def \k2 {k_2}
\def \k3 {k_3}
\def \jhat {\hat{j}}
\def \Pf {\mathbb{P}_f}
\def \Ef {\mathbb{E}_f}
\def \psilk {\psi_{l,k}}
\def \psilprimek {\psi_{l^{'},k}}
\def \alphalk {\alpha_{l,k}}
\def \alphalprimek {\alpha_{l^{'},k}}
\def \sumlk {\sum_{l=1}^{k}}
\def \sumllprime {\sum_{l \neq l^{'}}}
\def \psimax {\|\psi_0\|_{\infty}}
\def \lprime {l^{'}}
\def \fmax {\|f\|_{\infty}}
\def \const {C_{\psi_0}}
\def \constf {C(\psi_0,\fmax)}
\def \sumin {\sum_{i=1}^n}
\def \sumjn {\sum_{j=1}^n}
\def \sumij {\sum_{i \neq j}}
\def \sumijs {\sum_{i \neq j\neq s}}
\def \Ei {\mathbb{E}_{f_{X_i}}}
\def \Ej {\mathbb{E}_{f_{X_j}}}
\def \Ejs {\mathbb{E}_{f_{X_j},f_{X_s}}}
\def \n2 {\frac{1}{n(n-1)}}
\def \diag {\chi_{n,m} \times \chi_{n,m}}
\def \diagthree {\chi_{n,m} \times \chi_{n,m}\times 
	\chi_{n,m}}
\def \unionm {\bigcup_{m=1}^{M_n}}
\def \pnm {p_{n,m}}
\def \chinm {\chi_{n,m}}
\def \xij {(X_i,X_j)}
\def \xijs {(X_i,X_j,X_s)}
\def \In {\mathbf{I}_n}
\def \vmone {V_{n,m}^{(1)}}
\def \vmtwo {V_{n,m}^{(2)}}
\def \vmone {V_{n,m}^{(1)}}
\def \vmthree {V_{n,m}^{(3)}}
\def \vmcond {\Ef(V_{n,m}|\In)}
\def \summ {\sum_{m=1}^{M_n}}
\def \Nm {N_{n,m}}
\def \fmin {f_{\mathrm{min}}}
\let\tilde\widetilde
\def \nthree {\frac{1}{n(n-1)(n-2)}}
\def \ktilde {\tilde{K}}
\numberwithin{equation}{section}
\begin{document}
\begin{frontmatter}
\title{Lepski's Method and Adaptive Estimation of Nonlinear Integral Functionals of Density}
\begin{aug}
	\author{\fnms{Rajarshi} \snm{Mukherjee}\thanksref{t1,m1}\ead[label=e1]{rmukherj@stanford.edu}}
	\author{\fnms{Eric} \snm{Tchetgen Tchetgen}\thanksref{t2,m2}\ead[label=e2]{etchetge@hsph.harvard.edu}}
	\and
	\author{\fnms{James} \snm{Robins}\thanksref{t3,m1}\ead[label=e3]{robins@hsph.harvard.edu}}
	\thankstext{t1}{Stein Fellow, Department of Statistics, Stanford University}
	\thankstext{t2}{Professor, Department of Biostatistics, Harvard University}
	\thankstext{t3}{Professor, Department of Epidemiology, Harvard University}

\end{aug}
\begin{abstract}We study the adaptive minimax estimation of non-linear integral functionals of a density and extend the results obtained for linear and quadratic functionals to general functionals. The typical rate optimal non-adaptive minimax estimators of "smooth" non-linear functionals are higher order U-statistics. Since Lepski's method requires tight control of tails of such estimators, we bypass such calculations by a modification of Lepski's method which is applicable in such situations. As a necessary ingredient, we also provide a method to control higher order moments of minimax estimator of cubic integral functionals. Following a standard constrained risk inequality method, we also show the optimality of our adaptation rates.
\end{abstract}

\begin{keyword}[class=AMS]
\kwd[Primary ]{62G10}
\kwd{62G20}
\kwd{62C20}
\end{keyword}
\begin{keyword}
\kwd{Adaptive Minimax Estimation}
\kwd{Lepski's Method}
\kwd{Higher Order U-Statistics}
\end{keyword}

\end{frontmatter}

\section*{Introduction}
Estimation of statistical functionals in nonparametric problems has received considerable attention over the last few decades. Of specific interest have been both linear and non-linear integral functionals of an underlying density. For example, a large body of work has focused on estimating the entropy of an underlying distribution. \cite{beirlant1997nonparametric} provides an overview of results and related techniques. More recent works include estimation of R\'{e}nyi and Tsallis entropies \citep{leonenko2010statistical,pal2010estimation}. For more references and examples one can refer to \cite{kandasamy2014influence}. We consider a framework for estimating such integral functionals of a density. In particular, suppose $X_1,\ldots,X_n$ are i.i.d on $[0,1]$ with density $f(x)$ with respect to Lebesgue measure. 
We take $f \in H(\beta,C)$ where $H(\beta,C)$ is a H\"{o}lder ball of smoothness $\beta$ and radius $C>0$. We are interested in estimation of $\phi(f)$ where $\phi: \mathcal{F}: \rightarrow \mathbb{R}$ is a non-linear functional of density and $\mathcal{F}$ refers to the class of all densities on $[0,1]$. It is well known \citep{birge1995estimation} that if $\phi(f)=\int T(f(x))d\mu(x)$, and $T$ is sufficiently smooth, then the minimax rate of estimation over $f \in H(\beta,C)$ in squared error norm is  $n^{-\frac{8\beta}{1+4\beta}}$ when $\beta < \frac{1}{4}$ and $n^{-1}$ for $\beta \geq \frac{1}{4}$. 

There exits extensive literature addressing minimax estimation of linear and quadratic functionals in density, white noise or nonparametric regression models. Although definitely not exhaustive, a comprehensive snapshot of  this immense body of work can be found in \cite{ hall1987estimation, bickel1988estimating, donoho1990minimax1, donoho1990minimax2, fan1991estimation, kerkyacharian1996estimating, laurent1996efficient, cai2003note, cai2004minimax, cai2005nonquadratic} and other references therein. Most estimators proposed in the literature above, which attain the minimax rate of convergence over certain smoothness classes of the underlying function, depend explicitly on the knowledge of the smoothness index of the class. In particular, a standard technique in estimation of these functionals is expanding the infinite dimensional function of interest in an suitable orthonormal basis of $L_2[0,1]$ and estimate an approximate functional created by truncating the basis expansion at certain point. The point of truncation decides the approximation error of the truncated functional as an surrogate for the actual functional and depends on the smoothness of the function of interest and approximation properties of the orthonormal basis used. This point of truncation is then delicately balanced with the bias and variance of the resulting estimator and therefore directly depends on the smoothness of the function. Thus, it becomes of interest to understand the question of adaptive estimation i.e. the construction and analysis of estimators without prior knowledge of the smoothness.

The question of adaptation of linear and quadratic functionals has been studied in detail as well \citep{low1992renormalization, efromovich1994adaptive, efromovich1996optimal,efromovich2000adaptive, klemela2001sharp, laurent2000adaptive, cai2005adaptive1,cai2005adaptive2,cai2006adaptation,cai2006optimal,cai2008adaptive, gine2008simple} and references therein. However, adaptive estimation of general non-linear functionals has not been addressed in complete generality. In this paper we address the problem of adaptive estimation of general smooth non-linear functionals of a density. 

\cite{robins2008higher} have developed a concrete theory for addressing minimax estimation of a class of non-linear functionals in certain non-parametric and semi-parametric problems under low regularity smoothness conditions. Following similar logic of construction \cite{tchetgen2008minimax} constructs a minimax estimator of $\int f^3 d\mu$ for $\beta <\frac{1}{4}$. The specific minimax estimator is a third order U-statistics and the construction of the kernel depends explicitly on the knowledge of the underlying smoothness. Based on the technique of \cite{birge1995estimation} one can show that the minimax estimator of a general non-linear functional $\phi(f)=\int T(f)d\mu$ for smooth $T$ can be constructed by using ideas from linear, quadratic and cubic functionals of the density and appealing to a standard Taylor expansion argument.  While producing adaptive estimators of non-linear functionals as well, a similar strategy can be followed and it becomes crucial to understand the adaptive estimation of $\int f^3 d\mu$.

In our search for answers regarding $\int f^3 d\mu$, we start with looking at the general idea driving estimation of quadratic functionals of the density i.e. $\phi(f)=\int f^2 d\mu$. \cite{gine2008simple} provide adaptive estimators of quadratic functional $\phi(f)$ where the estimators are based on certain types of second order U-statistics with specific kernels. Our results provide a richer class of estimators based on a compactly supported wavelet basis and is in line with the theory established by \cite{gine2008simple, efromovich1996optimal}. 

Unlike a quadratic functional, the minimax estimator for $\phi(f)=\int f^r  d\mu$ is in general a $r^{\mathrm{th}}$ order U-statistic. In order to apply regular Lepski's Method we will need to obtain suitable exponential deviation inequalities for higher order U-statistics. Although, such moment inequalities do exist \citep{adamczak2006moment}, the bounds include complicated quantities which needs to be controlled in a problem specific manner. To bypass such complications, we employ a modification of Lepski's method where we test for the smoothness using our previously obtained second order U-statistics and use the selected smoothness to estimate the required functional. 

In this paper we focus on adaptation over the non$-\sqrt{n}$ regime i.e. $\beta<\frac{1}{4}$; although our proofs carries over easily for $-\sqrt{n}$ range i.e. $\beta \geq \frac{1}{4}$ as well where adaptation is possible without paying a price and it is possible to achieve asymptotic efficiency for $\beta>\frac{1}{4}$. Moreover, the case of higher dimensions of $X$ i.e. $d>1$ can also be achieved by arguments similar to those in this paper. A brief discussion of these possible extensions is given in Section 8.

The main contributions of this paper are as followed. This work extends previous results obtained for linear and quadratic functionals to new adaptation theory for non-linear integral functionals of the density. In order to do so, we develop a suitable variant of Lepski's method which bypasses establishing exponential tail bounds for estimators of general non-linear integral functionals. In applying this modified Lepski's method, the main challenge lies in obtaining suitable bounds on higher order moments of suitable estimators of general non-linear integral functionals. This requires control of moments of higher order U-statistics based on orthogonal projection kernels. We crucially use the structure of the projection kernels based on compactly supported wavelet basis. Following ideas from \cite{rltv2015}, we implement a binning argument to keep track of membership of sampled observations in partitions of the sample space created by a particular resolution level of the wavelet expansion, and this turns out to be crucial in controlling the higher order moments of our estimator at the right level.

We would like to mention that our proofs for analyzing adaptive estimator of general nonlinear integral functional of the density uses projection kernels based on Haar Basis expansion. We provide proof for general compactly supported wavelet based procedure when analyzing the quadratic functional. However, for analysis regarding general nonlinear integral functional of the density, the requirement of Haar Basis arises at one specific instance in the proof. We expect that similar results continue to hold for more general compactly supported wavelets using arguments developed herein. However we do not further consider other wavelet bases here. 

The paper is organized as follows. In Section 1, we discuss Lepski's method the main idea of the paper. In Section 2,  we introduce basic notations, terminology and some theory about compactly supported wavelets we will need in the sequel. In Section 3, we study the estimation of the quadratic functional and provide a general class of adaptive estimators when the kernels of the U-statistics are based on a class of compactly supported wavelet bases. Section 4 is devoted to the development of a modified Lepski's method suitable for analyzing higher order moments of the density. We then use this results to develop an adaptive estimator of $\int f^3 d\mu$ in section 5. Section 6 is used to understand how construction of an adaptive estimator of a general non-linear functional $\int T(f)d\mu$ for smooth $T$ can be derived from our analyses in Section 5. A lower bound on the required price of adaptation is provided in Section 7. Section 8 contains some discussions and future work. Finally all proofs are collected in the Appendix.

\section{Lepski's Method and Heuristics of the Main Idea}\hspace*{\fill} \\
The purpose of this section is to heuristically explain the main idea behind modifying Lepski's Method suitably in our context. We do this at level of abstraction and do not provide formal results. Sections 4 and 5 are devoted to making the heuristics of this section more precise. We begin with a discussion of Lepski's Method as a recipe for producing adaptive estimators from a sequence of candidate estimators. Our discussion is inspired by the wonderful article of \cite{birge2001alternative}. 
As astutely observed by \cite{birge2001alternative}, Lepski's Method can be succinctly described in a relatively abstract set up as follows.

Using notations in essence similar to \cite{birge2001alternative}, consider a family of experiment spaces $\{(\chi,\mathcal{B}(\chi),P),P \in S_{\theta}\}_{\theta \in \Theta}$ for measurable space $\chi$, sigma field $\mathcal{B}(\chi)$ and probability measure $P$ in one of the parameter sets $\{S_{\theta}\}_{\theta \in \Theta}$. 
With an i.i.d sample of size $n$ from such an experiment, we consider uniform rates of convergence for estimation of certain objects of interest over these parameter sets. In particular, suppose $s: P \rightarrow \mathcal{Y}$ is an object of interest for the experiment of interest with $\mathcal{Y}$ being a pseudo-metric space equipped with pseudo-metric $d(\cdot,\cdot)$. For a given estimator $\hat{s}$ based on sample of size $n$ of a required object of interest $s$, define its rate of convergence over $S_{\theta}$ as $r(\theta,\hat{s})=\sup_{s \in S_{\theta}}\E\left(d^q(s,\hat{s})\right)$, where $q \geq 1$. An estimator $\hat{s}$ is often called (minimax) rate optimal over $S_{\theta}$ if  $r(\theta,\hat{s})\asymp r(\theta):=\inf_{\tilde{s}}r(\theta,\tilde{s})$.

Now, starting from a family of rate optimal estimators $\{\hat{s}_{\theta}\}_{\theta \in {\Theta}}$, Lepski's Method provides a strategy for building a new estimator $\hat{\theta}$ which has ``good" performance simultaneously over all sets $S_{\theta},\theta \in \Theta$. Working under the regime of estimating a whole function in the context of nonparametric regression, \cite{lepskii1991problem, lepskii1992asymptotically, lepskii1993asymptotically} develop such a strategy. Later these methods have been further used by \cite{efromovich1994adaptive, efromovich1996optimal,klemela2001sharp,gine2008simple} and others for studying adaptation theory of linear and quadratic functionals in nonparametric problems. The method can be summarized roughly in brief as follows.

Suppose that $\Theta \subset \mathbb{R}$ is a bounded subset such that $S_{\theta}$ is non-decreasing with respect to $\theta$, the risks and minimax rates $r(\theta,\hat{s}_{\theta}), r(\theta)$ are continuous with respect to $\theta$, and for each $\theta \in \Theta$, $\exists$ a rate optimal estimator $\hat{s}_{\theta}$ where for large enough $n$, $d^q(s,\hat{s}_{\theta})$ is suitably concentrated around its expectation. One then chooses, for each $n$, a suitable fine discretization $\theta_1<\ldots<\theta_{K(n)}$ of $\Theta$ and finally for some large enough constant $C$ define the candidate estimator to be $\hat{s}_{\theta_{\jhat}}$ where
$$\jhat:=\inf \left\{j \leq K(n): d^q(\hat{s}_{\theta_j},\hat{s}_{\theta_l})\leq C r(\theta_l,\hat{s}_{\theta_l}), \ \forall l \in (j,K(n))\right\}.$$

Let us try to intuitively elaborate on the method described above. In its heart, the definition of $\jhat$ is devoted to choosing the ``best" $\theta$ from a a point of view of the risk of the estimator of interest. Often, the ``best" $\theta$ is the one corresponding to the unknown data generating mechanism and if $\jhat$ selects a value ``close enough" to this $\theta$ with high probability, then owing to the continuity property of the risk, one obtains desired adaptive performance of the final estimator $\hat{s}_{\theta_{\jhat}}$.  The required high probability selection of the desired $\theta$ is driven by the  concentration of $d^q(s,\hat{s}_{\theta})$ around its expectation. 

Consider now a situation where suitable concentration of $d^q(s,\hat{s}_{\theta})$ around its expectation is not easily achieved. This is often the case when the sequence of estimators $\{\hat{s}_{\theta}\}_{\theta \in \Theta}$ is not sufficiently ``nice" for application of standard concentration inequalities. In such cases there can be two possible ways out. The first entails obtaining a sufficiently sharp concentration inequality for the sequence of estimators $\{\hat{s}_{\theta}\}_{\theta \in \Theta}$ at hand. This is often an important and difficult question in its own right, even outside the context of the problem of adaptive estimation at hand. In certain cases however, an alternative strategy might be available. This paper  pursues the second approach in the context of estimating a non-linear integral functional of a density.

At a high level of abstraction, our method can be described as follows. As mentioned earlier, the main challenge in applying Lepski's Method is obtaining a suitably sharp  concentration of $d^q(s,\hat{s}_{\theta})$ around its expectation. However, suppose that there exists another sequence of estimators of a different object of interest $\tilde{s}: P \rightarrow \tilde{\mathcal{Y}}$ with $\tilde{\mathcal{Y}}$ being a pseudo-metric space equipped with pseudo-metric $\tilde{d}(\cdot,\cdot)$ and corresponding sequence of estimators $\hat{\tilde{s}}_{\theta}$ with concentration of $\tilde{d}^q(\tilde{s},\hat{\tilde{s}}_{\theta})$ around its expectation and a further property that $\tilde{d}^q(\hat{\tilde{s}}_{\theta},\hat{\tilde{s}}_{\theta'})$ and $\tilde{r}(\theta,\hat{\tilde{s}})$ equals ${d}^q(\hat{{s}}_{\theta},\hat{{s}}_{\theta'})$ and $r(\theta,\hat{s})$ up to multiplicative constants uniformly in $(\theta,\theta')\in \Theta \times \Theta$. Our idea is to define 

$$\tilde{\jhat}:=\inf \left\{j \leq K(n): \tilde{d}^q(\hat{\tilde{{s}}}_{\theta_j},\hat{\tilde{{s}}}_{\theta_l})\leq C \tilde{r}(\theta_l,\hat{\tilde{{s}}}_{\theta_l}), \ \forall l \in (j,K(n))\right\},$$
and using $\hat{s}_{\theta_{\jhat}}$ as our candidate estimator. Under some conditions on the actual sequence of estimators $\hat{s}_{\theta}$ one then obtains desired adaptation properties of the final estimator $\hat{s}_{\theta_{\jhat}}$. These conditions on $\hat{s}_{\theta}$ needs to be less demanding than sharp concentration of $d^q(s,\hat{s}_{\theta})$ around its expectation, and this turns out to be the case in our context.

To fix ideas, for our case the experiment space will be $(\chi,\mathcal{B}(\chi),P: P \in S_{\theta})_{\theta}=([0,1],\mathcal{B}([0,1]),P=fd\mu: f \in H(\beta,C))_{\beta \in (0,\frac{1}{4})}$; i.e. parameter spaces of interest are $H(\beta,C)$ where $\beta$ is identified with $\theta$ in the discussion above and the object of interest is $s=\int T(f) d\mu$ based on i.i.d. observations $X_1,\ldots,X_n$ from density $f\in H(\beta,C)$. As discussed earlier, the case of quadratic functionals., i.e. $T(x)=x^2$ has been studied in detail in the literature. In particular, one has suitable concentration of $\tilde{d}^2(\int f^2 d\mu, \tilde{\hat{s}}_{\beta})$ around its expectation, where $\tilde{d}(z_1,z_2)=|z_1-z_2|$ for $z_1,z_2 \in \mathbb{R}$ and $\tilde{\hat{s}}_{\beta}$ is rate optimal estimator of the quadratic functional. Such an estimator is often a second order U-statistic, for which the desired concentration type results can be obtained in reference to \cite{gine2000exponential,houdre2003exponential}. However, for estimating $s=\int f^3 d\mu$, which is crucial for understanding the general smooth non-linear functional problem, the estimator sequence $\{s_{\beta}\}$ is a third order U-statistic. In general,  for estimating $s=\int f^r d\mu$, the estimator sequence $\{s_{\beta}\}$ is a $r^{\mathrm{th}}$ order U-statistic. In order to apply the standard Lepski's Method we will need to obtain suitable exponential deviation inequalities for higher order U-statistics. Although, such moment inequalities exist \citep{adamczak2006moment}, the bounds include complicated quantities that must be controlled in a problem specific manner. However, owing to \cite{tchetgen2008minimax}, we observe that the biases of the estimator sequence $\{s_{\beta}\}$ are within multiplicative constants of that of the estimator for quadratic functional i.e. $\{\tilde{s}_{\beta}\}$ and their variances remain of same order as well. Since we are interested in estimation in squared error norm, this motivates us to use $\tilde{\hat{j}}$ constructed using the estimator for quadratic functional and analyzing $s_{\beta_{\tilde{\hat{j}}}}$ as our final estimator.

In Sections 3 and 4 below, we elaborate on the ideas laid down above. In particular, we provide a concrete example of the application of standard Lepski's Method while estimating quadratic functional of the density. Subsequently, we employ the modification of Lepski's method as discussed above to study adaptive estimation of more general non-linear integral functionals of the density. 
\section{Notation}
 It is well known that without imposing restrictions on function classes, consistent inference in non-parametric problems is not feasible. Our results are stated in terms of certain regularity conditions on the class of densities. We will place the following kind of bounds on their roughness or complexity. 

\begin{defn}
	A function $h(\cdot )$ with domain $\left[ 0,1\right] ^{d}$ is said to
	belong to a H\"{o}lder ball $H(\beta ,C),$ with H\"{o}lder exponent $\beta
	>0 $ and radius $C>0,$ if and only if $h\left( \cdot \right) $ is uniformly
	bounded by $C$, all partial derivatives of $h(\cdot )$ up to order $%
	\left\lfloor \beta \right\rfloor $ exist and are bounded, and all partial
	derivatives $\nabla ^{\left\lfloor \beta \right\rfloor }$ of order $%
	\left\lfloor \beta \right\rfloor $ satisfy 
	\begin{equation*}
		\sup_{x,x+\delta x\in \left[ 0,1\right] ^{d}}\left\vert \nabla
		^{\left\lfloor \beta \right\rfloor }h(x+\delta x)-\nabla ^{\left\lfloor
			\beta \right\rfloor }h(x)\right\vert \leq C||\delta x||^{\beta -\left\lfloor
			\beta \right\rfloor }.
	\end{equation*}
\end{defn}
With some abuse of notation we will denote by $H(\beta)$ the class of functions in $H(\beta,C)$ such that $\fmax \leq M^*$ and $\fmin \geq M_*$ (where $f_{\min}=\inf_{x\in [0,1]}f(x)$) for known constants $M^*>M_*>0$. Note that, indeed knowledge of $C$ gives us an bound on $M^*$. Our results will be based on known  $C,M^*,M_*$ only to the extent that these quantities will determine multiplicative constants of convergence rates obtained throughout. Adaptation w.r.t. these parameters may be of interest, however beyond the scope of this paper.

A crucial ingredient for constructing our estimators is the use of orthogonal projection kernels onto increasing finite dimensional subspaces of $L_2[0,1]$. The construction of such kernels is based on a suitable choice of an orthonormal basis of $L_2[0,1]$. With this in mind we provide a brief discussion on orthogonal projection kernels and compactly supported wavelet bases.

The kernels (possibly dependent on $n$) are assumed to be measurable maps $K_n: [0,1]\times [0,1] \rightarrow \mathbb{R}$ that are symmetric in their arguments and satisfy $\int \int K_n^2(x_1,x_2)dx_1 dx_2<\infty$ for all $n$. The corresponding kernel operator (which we denote by the same symbol with an abuse of notation) 
$$K_n h(x)=\int h(v)K_n(x,v)dv$$
are continuous linear operators $K_n: L_2[0,1]\rightarrow L_2[0,1]$. Throughout we will work with kernels whose operator norms $\|K_n\|=\sup\{\|K_n f\|_2: \|f\|_2\leq 1\}$ are uniformly bounded, i.e., $\sup_n \|K_n\|<\infty$. 
 The operator norm $\|K_n\|$ is however typically much smaller than the $L_2[0,1]\times L_2[0,1]$ norm of the kernel. In our case this will typically be of the order of $k$ given by the dimension of the projected space.  

Kernels $K_n$ most commonly used in statistical applications are usually projection kernels. A projection kernel operator satisfies $K_n h=h$ for all $h$ in its range space; that is for any function $f$ in the range of the kernel,  $f(x)=\int f(v)K_n(x,v)dv$ for $a.e. \ x$. For a given orthonormal basis $e_1,e_2,\ldots$ of $L_2[0,1]$, the orthogonal projection onto $lin(e_1,\ldots,e_k)$ is the map $K_k: f \rightarrow \sum_{j=1}^k \langle f, e_j \rangle e_j$. It is given by the kernel operator $K_k: L_2[0,1] \rightarrow L_2[0,1]$ with kernel
$$K_k(x_1,x_2)=\sum_{j=1}^k e_j(x_1)e_j(x_2).$$
It is easy to show by orthonormality properties that $\|K_k\|=1$ and $\int \int K_k^2 =k$. 

We now provide examples of orthogonal projection kernels that will be used extensively throughout. 

\subsection{Haar Basis}
For ``father" and ``mother" functions $\phi(x)=\I(0,1](x)$ and $\psi(x)=\I(0,1/2](x)-\I(1/2,1](x)$, the Haar basis is the set of functions $\{\phi,\psi_{i,j}:i=0,1,\ldots,j=0,1,\ldots, 2^i-1\}$, for $\psi_{i,j}(x)=2^{i/2}\psi(2^i x-j)$, $i=0,1,\ldots,j=0,1,\ldots, 2^i-1$. The Haar basis is a complete orthonormal basis of $L_2[0,1]$. The linear span of the first $k=2^I$ basis elements $\{\phi,\psi_{i,j}: i=0,\ldots,I-1, j=0,\ldots,2^i-1\}$ is equal to the linear span of the scaled and shifted father functions $\{\psi_{I,j}: j=0,\ldots, 2^I-1\}$ given by the kernel
\be
\ & K_k(x_1,x_2)=\sum_{j=0}^{2^I-1}\phi_{I,j}(x_1)\phi_{I,j}(x_2)\\&=k\sum_{j=1}^{k}I\left(x_1\in \left(\frac{j-1}{k},\frac{j}{k}\right]\right)I\left(x_2\in \left(\frac{j-1}{k},\frac{j}{k}\right]\right).
\ee

\subsection{Wavelets}
Consider expansions of functions $h \in L_2(\mathbb{R})$ on an orthonormal basis of compactly supported bounded wavelets of the form 

\be
h(x)&=\sum_{j \in \mathbb{Z}^d}\sum_{v \in \{0,1\}^d}\langle h, \psi_{0,j}^v\rangle \psi_{0,j}^v(x)+ \sum_{i=0}^{\infty}\sum_{j \in \mathbb{Z}^d}\sum_{v \in \{0,1\}^d-\{0\}}\langle h, \psi_{i,j}^v\rangle \psi_{i,j}^v(x) ,\\ \label{eqn:wavelet_expansion}
\ee
where the base functions $\psi_{i,j}^v$ are orthogonal for different indices $(i,j,v)$ and are scaled and translated versions of the $2^d$ base functions $\psi_{0,0}^v$, i.e., $\psi_{i,j}^v(x)=2^{id/2}\psi_{0,0}^v(2^i x-j)$. Such a wavelet basis can be obtained as tensor products $\psi_{0,0}^v=\phi^{v_1}\times \ldots \times \phi^{v_d}$ of a given father wavelet $\phi^0$ and mother wavelet $\phi^1$ in one dimension. 

We are interested in functions $f$ with support $[0,1]$. In view of the compact support of the wavelets, for each resolution level $i$ and index $v$, only $2^i$ base elements $\psi_{i,j}^v$ are non-zero on $[0,1]$; let us denote the corresponding set of indices $j$ by $j_i$. Truncating the expansion at resolution level $i=I$ then gives an orthogonal projection on a subspace of dimension $k$ of order $2^{I}$. Let 
$$K_k(x_1,x_2)=\sum_{j \in J_0}\sum_{v \in \{0,1\}}\psi_{0,j}^v(x_1)\psi_{0,j}^v(x_2)+\sum_{i=0}^{I}\sum_{j \in J_i}\psi_{i,j}^1(x_1)\psi_{i,j}^1(x_2).$$ It is worth noting that we can re-express the wavelet expansion \eqref{eqn:wavelet_expansion} to start from a level $I$ as 

\be
h(x)&=\sum_{j \in \mathbb{Z}}\sum_{v \in \{0,1\}}\langle h, \psi_{I,j}^v\rangle \psi_{I,j}^v(x)+ \sum_{i=I+1}^{\infty}\sum_{j \in \mathbb{Z}}\langle h, \psi_{i,j}^1\rangle \psi_{i,j}^1(x) .
\ee
The projection kernel $K_k$ sets the coefficients in the second sum equal to zero and hence can also be expressed as 
 
 $$K_k(x_1,x_2)=\sum_{j \in J_I}\sum_{v\in \{0,1\}}\psi_{I,j}^v(x_1)\psi_{I,j}^v(x_2).$$

Owing to the discussion above, with some abuse of notation, we will work with the definition of projection kernel as 
$$K_k(x_1,x_2)=\sum_{j =1}^{k}\psi_{k,j}(x_1)\psi_{k,j}(x_2),$$
which will correspond to the orthogonal projection operator onto the first $O(\log_2 k)$ wavelet basis. Owing to compactness of the support of wavelet bases only a fixed $O(k)$ many $\psi_{k,j}$'s will intersect  $[0,1]$. We have then conveniently renamed them as $1,\ldots,k$. This simplifies notation without loss of generality. In particular, this is exact for Haar kernel since as previously noted, we then have that 
$$K_k(x,y)=\sum_{l=1}^k\psi_{k,j}(x)\psi_{k,j}(y)$$
where $\psi_{k,j}(x)=\sqrt{k}\I(x \in (\frac{j-1}{k},\frac{j}{k})],\ j=1,\ldots,k$ is the $\log_2{k}$ level Haar basis of $L_2[0,1]$. Also, for larger expressions involved in some proofs, we often write $\psi_{k,l}=\psi_l^k$ for convenience.

With these conventions, let us now discuss some approximation properties of these projection kernels. Letting $\overline{\psi}:=lin\{\psi_{k,j}: j=1,\ldots,k\}$, define
$$f_k(x):=\Pi_{\mathrm{Lebesgue}}\left(f|\overline{\psi}\right):=\int f(y)K_k(x,y)dy.$$ 
For convenience of notation, define
$$\alpha_{k,j}=\int{\psi_{k,j}f}.$$

It is well known that \citep{hardle1998wavelets}, choosing $\psi_{k,l}$'s to be the $\log_2 k$ level compactly supported wavelet basis with suitable vanishing moment conditions on the mother wavelet, one has that $$\sup\limits_{h \in H(\beta,C)}\|h-h_k\|_{2} \lesssim k^{-\beta}.$$

The results in this paper are mostly asymptotic in nature and thus requires some standard asymptotic  notations. If $a_n$ and $b_n$ are two sequences of real numbers then $a_n \gg b_n$ (and $a_n \ll b_n$) implies that ${a_n}/{b_n} \rightarrow \infty$ (and ${a_n}/{b_n} \rightarrow 0$) as $n \rightarrow \infty$, respectively. Similarly $a_n \gtrsim b_n$ (and $a_n \lesssim b_n$) implies that $\liminf{{a_n}/{b_n}} = C$ for some $C \in (0,\infty]$ (and $\limsup{{a_n}/{b_n}} =C$ for some $C \in [0,\infty)$). Alternatively, $a_n=o(b_n)$ will also imply $a_n \ll b_n$ and $a_n=O(b_n)$ will imply that $\limsup{{a_n}/{b_n}} =C$ for some $C \in [0,\infty)$).
Finally we comment briefly on the various constants appearing throughout the text and proofs. Given that our primary results concern convergence rates of various estimators, we will not emphasize the role of constants throughout and rely on fairly generic notation throughout for such constants.
Some conventions we follow whenever required is a follows. Throughout the paper we denote by $\constf$ a non-negative constant that depends on a fixed $\psi_0$ (a function that can be taken as a majorant of both father and mother wavelets in absolute value) and $\fmax$. Often, $\const$ will denote a number which depends only on $\psi_0$. Hence if $\fmax$ is known to us, such constants are deterministic and eventually can all be replaced by an universal constant by taking suitable care of all possible constants appearing in this paper. Finally, we will also use $C(\psi_0,\fmax, \fmin)$ to denote numbers which depends on $\psi_0,\fmax, \fmin$ only.

 \section{Lepski's Method and Minimax Adaptive Estimation of $\phi(f)=\int f^2 d\mu$}\hspace*{\fill} \\

By way of introduction, this section we will provide a concrete example where Lepski's Method applies in its standard form. For the sake of simplicity, we will restrict ourselves to the adaptation over two parameter spaces indexed by two smoothness classes $\beta_1<\beta_0<\frac{1}{4}$. The more general case can be addressed from our results in Section 3. However, before going into further details, we need additional  notation and a basic result which drives the main idea behind Lepski's Method. For our candidate sequence of estimators, define
$$U_n^{(k)}=
\frac{1}{n(n-1)}\sum\limits_{i_1\neq i_2}K_k(X_{i_1},X_{i_2}),$$
$$k(\beta)=\lceil n^{\frac{2}{1+4\beta}}\rceil,$$
$$U_{n,j}:=U_n^{(k_j)},\ k_j=k(\beta_j):=\lceil n^{\frac{2}{1+4\beta_j}}\rceil,$$
$$U_n^{(k_j^*)},\ k_j^*=k^*(\beta_j)=\lceil \left(\frac{n^2}{\log{n}}\right)^{\frac{1}{1+4\beta_j}}\rceil.$$

It is easy to show that, knowing the exact smoothness $\beta_j$ one can readily use $U_{n,j}$ as a minimax rate optimal estimator of $\phi(f)$. 
Asymptotic normality of the estimators, as stated by Lemma \ref{lemma_asymp_norm}, drives Lepski's Method as described in Section 2. 
Let,
$$U_{n,0}:=U_n^{(k_0)},\ k_0=\lceil n^{\frac{2}{1+4\beta_0}}\rceil,$$
$$U_{n,1}:=U_n^{(k_1)},\ k_1=\lceil n^{\frac{2}{1+4\beta_1}}\rceil,$$
$$U_{n,*}:=U_n^{(k_*)},\ k_*=\lceil \left(\frac{n^2}{\log{n}}\right)^{\frac{1}{1+4\beta_1}}\rceil.$$

 The method we now describe, is along the lines of \cite{efromovich1996optimal} suitably adapted to deal with our situation. Our goal is to choose a data dependent $\hat{k} \in \{k_0,k_1\}$ so that if we base our estimation on $U_n^{(\hat{k})}$, the mean squared error of estimation will be suitably controlled. In particular, 
\begin{eqnarray*}
	&&\sup_{f}E\left[ U_{n,\widehat{j}}-\psi \left( f\right) \right] ^{2} \\
	&\leq &\sum\limits_{k=0}^{1}\sup_{f}E\left\{ \I\left\{ \widehat{j}%
	=k\right\} \left[ U_{n,\widehat{j}}-\psi \left( f\right) \right] ^{2}\right\}
	\\
	&\equiv & \sup_f R_{0}(\beta_f)+\sup_f R_{1}(\beta_f)
\end{eqnarray*}
We divide our study into two cases having two sub-cases each.

First, consider the case $\beta=\beta_0$. Then, $R_0(\beta_0)$ achieves the minimax rate since the truth is $\beta_0$ and we are choosing $\beta_0$. Next we need to bound $R_1(\beta_0)$ by $n^{-\frac{8\beta_0}{1+4\beta_0}}$ up to a potentially logarithmic factor. It is worth noting, that along the lines of \cite{efromovich1996optimal}, since $\beta_0$ corresponds to higher smoothness regime, we do not need to pay the logarithmic price. The term $R_1(\beta_0)$ corresponds to the scenario where the truth is $\beta_0$ but the procedure wrongly chooses $\beta_1$ as the underlying smoothness. If this happens too often, then the mean squared error will be sub-optimal. Therefore, our testing procedure should guarantee that type of error, i.e., choosing a lower smoothness when the truth is a higher smoothness, does not happen too often. Since, the probability of making such an error must reduce the mean squared error $n^{-\frac{8\beta_1}{1+4\beta_1}}$ down to $n^{-\frac{8\beta_0}{1+4\beta_0}}$ for any $\beta_1$, it must be of $O(\frac{1}{n})$. So the problem boils down to designing a selection procedure so that probability of selecting the lower smoothness is $O(\frac{1}{n})$. If one can find a sequence of statistics $T_n$ such that $T_n$ converges weakly to N(0,1) "fast enough"  under $\beta_0$ and $|T_n|$ diverges polynomially in $n$ under $\beta_1$, then a simple test for selecting the lower smoothness with required error rate is given by $\I(T_n>\tn)$. To see this, note that, the probability of selecting the lower smoothness under $\beta_0$ is approximately $Pr(|N(0,1)|>\tn)=O(\frac{1}{n})$. Provided the approximation in the CLT is also of the order of $\frac{1}{n}$ we have achieved our goal for this first case.

Considering the complementary case of $\beta=\beta_1$, we will need the error probability of selecting the wrong smoothness $Pr(|T_n|\leq \tn)$ to converge to $0$ ``polynomially fast". As we establish below, this can be achieved with the right choice of test statistic. Finally, by Lemma \ref{lemma_asymp_norm}, a candidate $T_n$ which satisfies the above mentioned properties is given by  $$\frac{U_{n,*}-U_{n,0}}{\sqrt{Var_f(U_{n,*})}}.$$

 We now provide the exact technical details that justify the argument laid down so far. 

We start with a few further notations. Define
$$I(k_1,k_2):=\phi(f_{k_2})-\phi(f_{k_1})\ k_1\leq k_2,$$
corresponding to the difference in truncation bias at $k_1$ versus $k_2$.
With this notation, it is easy to see that $I(k_0,k_*)=O(k_0^{-2\beta_f})$ when the truth is $\beta_f$. A suitable estimator of $I(k_0,k_*)$ is indeed $$\hat{I}(k_0,k_*)=U_{n,*}-U_{n,0}.$$ 
  We then have the following theorem. 
\begin{theorem}\label{theorem_two_point_quadratic}
For a deterministic constant $C$ depending on the parent wavelets, we have the following result. Let
$$\hat{j}=\I\left(\ihatk>C\sqrt{\log{n}}\frac{\sqrt{\kstar}}{n}\right)$$
and let $$\hat{\phi}=U_{n,\hat{j}}.$$ Then 
$$\sup_{f \in H(\beta_0)}\Ef\left[\hat{\phi}-\phi(f)\right]^2\lesssim n^{-\frac{8\beta_0}{1+4\beta_0}},$$
$$\sup_{f \in H(\beta_1)}\Ef\left[\hat{\phi}-\phi(f)\right]^2\lesssim (\log{n})^{\frac{4\beta_1}{1+4\beta_1}}n^{-\frac{8\beta_1}{1+4\beta_1}}.$$ 
\end{theorem}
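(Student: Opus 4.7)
I would start from the oracle decomposition
\begin{equation*}
\Ef[\hat{\phi}-\phi(f)]^2
= \Ef\bigl[\I(\hat{j}=0)(U_{n,0}-\phi(f))^2\bigr]+\Ef\bigl[\I(\hat{j}=1)(U_{n,1}-\phi(f))^2\bigr]=:R_0+R_1,
\end{equation*}
and treat the two smoothness regimes separately. The common workhorse will be the quantitative asymptotic normality of $\ihatk=U_{n,*}-U_{n,0}$ furnished by Lemma~\ref{lemma_asymp_norm}, which I will use to convert each event $\{\hat{j}=\mathrm{wrong}\}$ into a polynomial-in-$n$ tail bound.

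\textbf{Case $f\in H(\beta_0)$.} The term $R_0$ is handled by simply dropping the indicator: the U-statistic $U_{n,0}$ with bandwidth $\k0\asymp n^{2/(1+4\beta_0)}$ attains the minimax rate $O(n^{-8\beta_0/(1+4\beta_0)})$ via the standard bias--variance balance. For $R_1$ I apply Cauchy--Schwarz,
\begin{equation*}
R_1 \le \sqrt{\Pf(\hat{j}=1)}\,\sqrt{\Ef\bigl[(U_{n,1}-\phi(f))^4\bigr]},
\end{equation*}
and bound the fourth central moment of $U_{n,1}$ by a fixed polynomial in $n$ by expanding the second-order U-statistic via the projection kernel and using $\fmax<\infty$. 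Under $\beta_0$ the true bias $|\Ik|=O(\k0^{-2\beta_0})$ is much smaller than the threshold $C\sqrt{\log n}\sqrt{\kstar}/n$, so $\{\hat{j}=1\}$ requires $\ihatk$ to deviate from its mean by at least a constant times $\sqrt{\log n}$ standard deviations; Lemma~\ref{lemma_asymp_norm} then gives $\Pf(\hat{j}=1)\le n^{-c}$ with $c$ as large as needed by choosing $C$ large, which absorbs the polynomial fourth moment.

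\textbf{Case $f\in H(\beta_1)$.} Here $R_1\le\Ef(U_{n,1}-\phi(f))^2=O(n^{-8\beta_1/(1+4\beta_1)})$ by dropping the indicator, and the logarithmic penalty must come from $R_0$. I would split $R_0$ according to the size of the true bias $\Ik$. If $|\Ik|\le 2C\sqrt{\log n}\sqrt{\kstar}/n$, the bias of $U_{n,0}$ itself is controlled by the triangle inequality as $|\phi(f)-\phi(f_{\k0})|\le|\phi(f)-\phi(f_{\kstar})|+|\Ik|\lesssim\kstar^{-2\beta_1}+\sqrt{\log n}\sqrt{\kstar}/n$, and a direct computation with $\kstar=(n^2/\log n)^{1/(1+4\beta_1)}$ shows both summands are of order $n^{-4\beta_1/(1+4\beta_1)}(\log n)^{2\beta_1/(1+4\beta_1)}$; squaring and adding the strictly smaller variance $\k0/n^2=O(n^{-8\beta_0/(1+4\beta_0)})$ gives $R_0\le\Ef(U_{n,0}-\phi(f))^2\lesssim(\log n)^{4\beta_1/(1+4\beta_1)}n^{-8\beta_1/(1+4\beta_1)}$, which is exactly the announced rate. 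If instead $|\Ik|>2C\sqrt{\log n}\sqrt{\kstar}/n$, the event $\{\hat{j}=0\}$ forces $\ihatk$ to differ from its mean by at least $|\Ik|/2$, which is a strict polynomial multiple of its standard deviation $\sqrt{\kstar}/n$; Lemma~\ref{lemma_asymp_norm} then yields $\Pf(\hat{j}=0)\le n^{-c}$, and Cauchy--Schwarz against the polynomially bounded fourth moment of $U_{n,0}$ makes this contribution negligible.

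\textbf{Main obstacle.} The technical crux is the polynomial tail control on $\hat{j}$: a crude Chebyshev-only bound would give only $\Pf(\hat{j}=\mathrm{wrong})=O(1/\log n)$, which is far too weak to dominate the polynomially large fourth moment of the misspecified U-statistic. Closing this gap is precisely what a quantitative CLT for $\ihatk$ is designed for, and it is exactly the structure that breaks for higher-order U-statistics in later sections, motivating the modification of Lepski's method developed in the rest of the paper. A secondary check is the polynomial-in-$n$ fourth moment bound for $U_{n,0},U_{n,1}$, which is routine via the projection-kernel expansion and orthonormality of the wavelet basis; matching $\sqrt{\log n}$ in the threshold with $(\log n)^{4\beta_1/(1+4\beta_1)}$ in the rate is purely arithmetic via the parametrization of $\kstar$.
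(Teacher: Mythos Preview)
Your overall decomposition into $R_0+R_1$ and the case split by smoothness mirror the paper exactly, but the tool you invoke at the crux is wrong. Lemma~\ref{lemma_asymp_norm} is a bare weak-convergence statement; it carries no rate and therefore cannot yield $\Pf(\hat j=\text{wrong})\le n^{-c}$ for any $c>0$, let alone for $c$ ``as large as needed.'' A Berry--Esseen type refinement would only give an $O(n^{-1/2})$ approximation error, which still swamps the Gaussian tail at $\sqrt{\log n}$. What the paper actually uses is the exponential inequality for degenerate second-order U-statistics of Houdr\'e--Reynaud-Bouret (Lemma~\ref{lemma_exponential_inequality_u_statistics} together with Lemma~\ref{lemma_evaluation_A1_to_A4}), which directly gives $\Pf(\hat j=1)\lesssim 1/n$ for a \emph{fixed} constant $C$ depending only on the wavelets. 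Pairing this $1/n$ bound with Cauchy--Schwarz and a crude polynomial fourth moment would not close the gap either: you would need $C$ to depend on $\beta_0,\beta_1$, contradicting the theorem. The paper instead uses H\"older with a general conjugate pair $(p,q)$ and the sharp moment bound $\{\Ef[(U_{n,1}-\phi(f))^{2q}]\}^{1/q}\lesssim k_1/n^2$ of Lemma~\ref{lemma_error_lqnorm}, so that $n^{-1/p}\cdot k_1/n^2$ beats the $\beta_0$ rate once $p$ is taken close enough to $1$.

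Your treatment of the $\beta_1$ large-bias subcase has the same defect plus a separate slip: when $|\Ik|$ sits just above the threshold $2C\sqrt{\log n}\,\sqrt{\kstar}/n$, the required deviation $|\ihatk-\Ik|$ is only of order $\sqrt{\log n}$ standard deviations, not a ``strict polynomial multiple,'' so you again cannot extract $n^{-c}$ for arbitrary $c$. The paper does not use a tail bound here at all. Instead it inserts a small cushion $\nu_n=(\log n)^{-1/(8\beta_1+2)}$ in the threshold, replaces the indicator by the Markov surrogate $\I(I-\hat I>c\,\nu_n I)\le (c\,\nu_n I)^{-2}(I-\hat I)^2$, and then applies Cauchy--Schwarz to $\Ef[(I-\hat I)^2(U_{n,0}-\phi)^2]$ with sharp fourth-moment control on both factors; the $\nu_n^{-2}$ is exactly absorbed. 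Your ``small bias'' subcase, by contrast, is essentially the paper's Cases~2--3 and is correct as sketched.
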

The proof can be found in the appendix. However, it is worth mentioning that the two crucial ingredients of the proof is control of the error of testing at a suitably vanishing level and bounding higher order central moments of the non-adaptive minimax estimators dependent on non-random truncation level $k$. These properties are both derived from exponential tail inequalities for second order U-statistics using results of \cite{gine2000exponential,houdre2003exponential}. In particular, this is a typical structure for most applications of Lepski's method i.e. the treatment crucially relies on some exponential inequalities for the deviations of the sequence of non-adaptive minimax estimators \citep{birge2001alternative}. For estimation of general non-linear functionals this poses a challenge. The next section is devoted in understanding a possible way out.


\section{A General Result by a Modification of Lepski's Method}\hspace*{\fill} \\
The results of the previous section strictly hold only for the quadratic functional $\phi(f)=\int f^2 d\mu$ so far. This is because, we have crucially used a deviation inequality which only holds for degenerate second order U-statistics. The minimax estimator for $\phi(f)=\int f^r  d\mu$ is in general a $r^{\mathrm{th}}$ order U-statistic. In order to apply standard Lepski's Method we will need to obtain suitable exponential deviation inequalities for higher order U-statistics. Although, such moment inequalities do exist \citep{adamczak2006moment}, the bounds include complicated quantities which needs to be controlled in a problem specific manner. However, owing to \cite{tchetgen2008minimax}, we observe that the biases of candidate estimators are within multiplicative constants of that of the estimator for quadratic functional and the variances remain of same order as well. Since we are interested in estimation in squared error norm, this motivates us to use the idea discussed in Section 2 to construct our final estimator. We now explain this below.

For a given choice of $d>1$, let $N$ be the largest integer such that $d^{N-1}\leq n^{1-\frac{2}{\log{\log{n}}}}$. Set $k_{j-1}=d^{j-1} n $ for $j=1,\ldots, N$. This holds if and only if, $(N-1)\leq \frac{\log{n}}{\log{d}}\left(1-\frac{2}{\log{\log{n}}}\right)$ and also that $\frac{k_N}{n^2}=o(\frac{1}{n})$. Now, for $j=0,\ldots,N-1$, define $\beta_j$ to be the solution of $k_j=n^{\frac{2}{1+4\beta_j}}$. This implies that $1+4\beta_j=\frac{2\log{n}}{(j-1)\log{d}+\log{n}},\ j=0,\ldots,N-1$. Also, note that by construction, $k_0\leq k_1 \leq \ldots \leq k_{N-1}$ and therefore, $\beta_0\geq \beta_1 \geq \ldots \geq \beta_{N-1}$. Also, a simple calculation shows that, $\beta_{N-1}\geq \frac{1}{4\log{\log{n}}-1}$. Therefore, $\frac{1}{4}\geq\beta_0\geq \beta_1 \geq \ldots \geq \beta_{N-1}\geq \frac{1}{4\log{\log{n}}-1}$. Further, note that for all $0\leq l_1\leq l_2 \leq N-1$, there exists constants $c_1,c_2$ such that $\beta_{l_1}-\beta_{l_2}\in \left[c_1\frac{l_2-l_1}{\log{n}}, c_2\frac{l_2-l_1}{\log{n}}\right]$. Further, define $k_j^*=\frac{k_j}{\log{n}^{\frac{1}{1+4\beta_j}}}=\left(\frac{n^2}{\log{n}}\right)^{\frac{1}{1+4\beta_j}}$ and $R(k_j^*)=\frac{k_j^*}{n^2}$. This definition implies that for $l_1 >l_2$, $\frac{k_{l_1}^*}{k_{l_2}^*}=\left(\frac{n^2}{\log{n}}\right)^{\frac{4\beta_{l_2}-4\beta_{l_1}}{(1+4\beta_{l_1})(1+4\beta_{l_2})}}\rightarrow \infty$. However, note that $\frac{k_{l_1}^*}{k_{l_2}}$ might not enjoy similar properties for certain ranges of $l_1>l_2$. Especially, for $\beta$ values within $\frac{1}{\log{n}}$ rate of each other, the corresponding $k$ values do not enjoy the above mentioned property. Let $s^*=s^*(n)$ be the smallest integer such that, $k(\beta_{s^*})^* \geq n$. Finally, define 
\be
\jhat&:=\min\left\{j: \hat{I}^2(k_j^*,k_l^*)\leq C_{\mathrm{opt}}^2 \log{n} R(k_l^*) \ \forall \ l\geq j, s^*\leq j \leq N-1 \right\}.
\ee
where $C_{\mathrm{opt}}$ is a deterministic constant to be specified later and similar to Section 3, $\hat{I}(k_j,k_l)=U_{k_l}-U_{k_j}$ for $k_j<k_l$.
\begin{theorem}\label{theorem_modified_lepski}
Suppose an estimator $U_{n,k}$ of $\phi(f)$ satisfies the following properties.
\begin{enumerate}
	\item For sufficiently large $k,n $ and fixed choice of $q\geq 1$,
	$$\sup_{f \in H(\beta)}\Ef^{\frac{1}{q}}\left[\left(U_{n,k}-\Ef(U_{n,k})\right)^{2q}\right]\leq C_q \frac{k}{n^2}$$  
	\item There exists a constant $C$ such that for any $f \in H(\beta,C)$, 
	$$|I_c(k_1,k_2)|\leq C I(k_1,k_2),$$ 
	for sufficiently large $k_1<k_2$ where $I_c(k_1,k_2)=\Ef(U_{n,k_2})-\Ef(U_{n,k_1})$ and by our previous convention $I(k_1,k_2)=\int f_{k_2}^2-\int f_{k_1}^2$.
\end{enumerate}	
Then for any $\beta \in (0,\frac{1}{4})$ and $\epsilon>0$, we have that 
$$\sup_{f \in H(\beta,C)} \Ef\left[\left(U_{n,k_{\jhat}^*}-\phi(f)\right)^2\right]\lesssim n^{-\frac{8\beta}{1+4\beta}}\log{n}^{\frac{4(\beta+\epsilon)}{1+4(\beta+\epsilon)}}.$$
\end{theorem}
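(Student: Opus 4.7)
The proof will adapt the oracle-style analysis of standard Lepski's method, but since assumption 1 only provides polynomial moment bounds (not exponential concentration), the standard tail estimates must be replaced by Markov-type inequalities applied at the $2q$-th moment level, and the resulting polynomial-in-$\log n$ tails will be absorbed into the $\epsilon$-slack present in the target rate. First, using the grid structure, I would pick a deterministic oracle index $j^\star \in \{s^*,\ldots,N-1\}$ with $\beta_{j^\star}$ as close to $\beta$ as the grid permits; since consecutive $\beta_j$ differ by $\Theta(1/\log n)$, we obtain $k_{j^\star}^* \asymp k^*(\beta)$ up to constants. Combining the wavelet approximation bound $\|f-f_k\|_2 \lesssim k^{-\beta}$ with assumption 2 yields
\[
|I_c(k_{j^\star}^*,k_l^*)| \;\leq\; C\,I(k_{j^\star}^*,k_l^*) \;\lesssim\; (k_{j^\star}^*)^{-2\beta},
\qquad \text{so } I_c^2(k_{j^\star}^*,k_l^*) \;\lesssim\; \log n\cdot R(k_l^*) \quad \forall\, l\geq j^\star.
\]

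Next, decompose $\mathbb{E}_f[(U_{n,k_{\hat j}^*}-\phi(f))^2]$ according to $\{\hat j\leq j^\star\}$ and $\{\hat j>j^\star\}$. On the good event $\{\hat j\leq j^\star\}$, the selection rule immediately gives $\hat I^2(k_{\hat j}^*,k_{j^\star}^*)\leq C_{\text{opt}}^2\log n\cdot R(k_{j^\star}^*)$, so by the triangle inequality
\[
(U_{n,k_{\hat j}^*}-\phi(f))^2 \;\leq\; 2C_{\text{opt}}^2\log n\cdot R(k_{j^\star}^*) + 2(U_{n,k_{j^\star}^*}-\phi(f))^2.
\]
The oracle risk on the right is handled by assumption 1 with $q=1$ (for variance) and assumption 2 together with the approximation bound (for bias); both parts contribute at most a constant multiple of $n^{-8\beta/(1+4\beta)}(\log n)^{4\beta/(1+4\beta)}$, which is below the target rate.

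For the bad event $\{\hat j>j^\star\}$, I would apply H\"older's inequality with exponent $q$ coming from assumption 1:
\[
\mathbb{E}_f\!\bigl[(U_{n,k_{\hat j}^*}-\phi(f))^2\mathbb{1}(\hat j>j^\star)\bigr] \;\leq\; \bigl(\mathbb{E}_f[(U_{n,k_{\hat j}^*}-\phi(f))^{2q}]\bigr)^{1/q}\cdot \mathbb{P}_f(\hat j>j^\star)^{1-1/q}.
\]
The $L^{2q}$ factor is controlled by expanding over $\hat j = j$ for $j>j^\star$, applying assumption 1 and the global bias bound from assumption 2 to each $U_{n,k_j^*}$; the worst-case contribution is of order $R(k_{N-1}^*)$, which is $o(1)$ by the construction of $N$. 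For the probability, the event $\{\hat j>j^\star\}$ forces $\hat I^2(k_{j^\star}^*,k_l^*) > C_{\text{opt}}^2\log n\cdot R(k_l^*)$ for some $l\geq j^\star$; decomposing $\hat I = I_c + (\hat I - I_c)$, choosing $C_{\text{opt}}$ large enough that the bias from Step 1 occupies strictly less than half the threshold, and applying Markov's inequality at level $2q$ to $\hat I - I_c$ (whose $2q$-norm is controlled via Minkowski and assumption 1 by $\sqrt{R(k_l^*)}$) gives, after a union bound over the $O(\log n)$ admissible $l$,
$
\mathbb{P}_f(\hat j>j^\star) \;\lesssim\; (\log n)^{1-q}.
$

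The main obstacle is that, in stark contrast to Section 3, we do not have exponential deviation inequalities, so $\mathbb{P}_f(\hat j>j^\star)$ decays only polynomially in $\log n$ rather than in $n$. Accommodating this is exactly where the $\epsilon>0$ in the statement is used: the target log exponent $4(\beta+\epsilon)/(1+4(\beta+\epsilon))$ strictly exceeds the oracle exponent $4\beta/(1+4\beta)$, and this positive slack permits the product of the $L^{2q}$ bound with $(\log n)^{(1-q)(1-1/q)}$ to be absorbed provided $q=q(\beta,\epsilon)$ is chosen large enough (but still fixed in $n$). Carefully reconciling the slow decay of $R(k_{N-1}^*)$ appearing in the $L^{2q}$ factor against the $(\log n)^{-\Omega(q)}$ tail, and selecting $q$ so that the net bound is at most $n^{-8\beta/(1+4\beta)}(\log n)^{4(\beta+\epsilon)/(1+4(\beta+\epsilon))}$, constitutes the technical crux of the proof.
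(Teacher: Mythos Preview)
Your proposal rests on a misreading of the selection rule. The index $\hat j$ is \emph{not} defined through the estimator $U_{n,k}$ of the theorem; it is defined (just above the theorem statement) through the \emph{quadratic} functional estimator of Section~3, i.e.\ $\hat I(k_j,k_l)=U_{k_l}^{(\mathrm{quad})}-U_{k_j}^{(\mathrm{quad})}$. This is precisely the ``modification'' of Lepski's method announced in Section~1: one borrows an auxiliary, well-concentrated sequence to select the tuning parameter and then plugs the selected index into the target estimator $U_{n,k}$. Two consequences follow.

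First, your good-event step is invalid as written. On $\{\hat j\le j^\star\}$ the selection constraint bounds $(U^{(\mathrm{quad})}_{n,k_{j^\star}^*}-U^{(\mathrm{quad})}_{n,k_{\hat j}^*})^2$, not $(U_{n,k_{\hat j}^*}-U_{n,k_{j^\star}^*})^2$, so the triangle inequality you use does not connect the selection rule to the target estimator. The paper circumvents this by never using the selection constraint pointwise on $U_{n,k}$; instead it introduces a further threshold $l_c(C^*)$ (the largest $l$ with $I^2(k_l^*,k_j^*)>C^*\log n\,R(k_j^*)$) and, on the middle range $l_c<l\le j+1$, controls the risk via assumption~1 (moments of $U_{n,k}$), assumption~2 (the bias comparison $|I_c|\le C\,I$), and the very definition of $l_c$ (which caps $I^2$). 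The $\epsilon$ in the conclusion arises here, from bounding $\sum_{l}P_f(\hat j=l)^{1/p}\le (\log n)^{1-1/p}$ over $O(\log n)$ middle-range indices.

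Second, your bad-event analysis is both unnecessary and insufficient. Because $\hat I$ is a second-order $U$-statistic, the Houdr\'e--Reynaud exponential inequality (Lemmas~\ref{lemma_exponential_inequality_u_statistics}--\ref{lemma_evaluation_A1_to_A4}) yields $P_f(\hat j=l)\lesssim 1/n$ for every $l\le l_c$ and every $l\ge j+2$; this is what the paper proves in Lemmas~\ref{lemma_l_less_than_lc} and~\ref{lemma_l_bigger_than_jplusone}. Hence the extreme ranges contribute at most $(\log n)\,(1/n)^{1/p}$ times a bounded factor, which is negligible. By contrast, your Markov-only route gives $P_f(\hat j>j^\star)\lesssim(\log n)^{1-q}$, while the $L^{2q}$ factor you pair it with is of order $R(k_{N-1}^*)\asymp n^{-2/\log\log n}$, only sub-polynomially small. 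No fixed choice of $q$ can make $(\log n)^{-(q-1)^2/q}\cdot n^{-2/\log\log n}$ smaller than $n^{-8\beta/(1+4\beta)}$; the ``reconciliation'' you flag as the technical crux is therefore impossible, not merely delicate.
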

Some remarks are in order about the implications of the theorem above. It says that if a sequence of estimators, having common index with the minimax estimator of the quadratic functional, has bias and higher order moments of the same order of magnitude as the quadratic functional estimator, then following the discussion in Section 1, the new sequence of estimators also shares the same rate of convergence (up to almost matching logarithmic factor) towards its corresponding functional simultaneously over the functional spaces which dictates the adaptation of the quadratic functional. The $\beta+\epsilon$ appearing in the logarithmic factor, is due using a different test statistic than $U_{n,k}$ as dictated by Lepski's Method. Establishing an equivalence in the order of the bias for these functionals requires fairly standard arguments \citep{tchetgen2008minimax}. The main challenge lies in obtaining suitable bounds on higher order moments of these estimators. This requires control of moments of higher order U-statistics based on orthogonal projection kernels. We crucially use the structure of the projection kernels based on compactly supported wavelet basis. Following ideas from \cite{rltv2015}, we implement a binning argument to keep track of membership of sampled observations in partitions of the sample space created by a particular resolution level of the wavelet expansion, and this turns out to be crucial in controlling the higher order moments of our estimator at the right level.

\section{Minimax Adaptive Estimation of $\int f^3 d\mu$}\hspace*{\fill} \\
As mentioned earlier, based on the technique of \cite{birge1995estimation} one can show that the minimax estimator of a general non-linear functional $\phi(f)=\int T(f)d\mu$ for smooth $T$ can be constructed by using ideas from linear, quadratic and cubic functionals of the density and appealing to a standard Taylor expansion argument.  While producing adaptive estimators of non-linear functionals as well, a similar strategy can be followed and it becomes crucial to understand the adaptive estimation of $\int f^3 d\mu$. In particular, a non-adaptive minimax rate optimal estimator of $\phi \left( f\right)=\int f^3 d\mu $ is given by
\be
	\ & \widehat{\phi }_{k_{1} ,k_{2}
		,k_{3}} =\mathbb{V}_{n}\left[ \widetilde{K}%
	_{k_{1} ,k_{2} ,k_{3} }\left( X_{i_{1}},X_{i_{2}},X_{i_{3}}\right) \right] \\
	&=\mathbb{V}_{n}\left[ \int K_{k_{1}}\left( x,X_{i_{1}}\right)
	K_{k_{1}}\left( x,X_{i_{2}}\right) K_{k_{1}}\left( x,X_{i_{3}}\right) dx%
	\right] \\
	&+3\mathbb{V}_{n}\left[ \int K_{k_{1}}\left( x,X_{i_{1}}\right) \left(
	K_{k_{3}}\left( x,X_{i_{2}}\right) -K_{k_{1}}\left( x,X_{i_{2}}\right)
	\right) K_{k_{1}}\left( x,X_{i_{3}}\right) dx\right] \\
	&+3\mathbb{V}_{n}\left[ \int K_{k_{1}}\left( x,X_{i_{1}}\right) \left(
	K_{k_{3}}\left( x,X_{i_{2}}\right) -K_{k_{1}}\left( x,X_{i_{2}}\right)
	\right) \left( K_{k_{3}}\left( x,X_{i_{3}}\right) -K_{k_{1}}\left(
	x,X_{i_{3}}\right) \right) dx\right] \\
	&+3\mathbb{V}_{n}\left[ \int \left( 
	\begin{array}{c}
		\left( K_{k_{2}}\left( x,X_{i_{1}}\right) -K_{k_{1}}\left(
		x,X_{i_{1}}\right) \right) \left( K_{k_{2}}\left( x,X_{i_{2}}\right)
		-K_{k_{1}}\left( x,X_{i_{2}}\right) \right) \\ 
		\left( K_{k_{3}}\left( x,X_{i_{3}}\right) -K_{k_{2}}\left(
		x,X_{i_{3}}\right) \right)%
	\end{array}%
	\right) dx\right] \\
	&+\mathbb{V}_{n}\left[ \int \left( 
	\begin{array}{c}
		\left( K_{k_{2}}\left( x,X_{i_{1}}\right) -K_{k_{1}}\left(
		x,X_{i_{1}}\right) \right) \left( K_{k_{2}}\left( x,X_{i_{2}}\right)
		-K_{k_{1}}\left( x,X_{i_{2}}\right) \right) \\ 
		\left( K_{k_{2}}\left( x,X_{i_{3}}\right) -K_{k_{1}}\left(
		x,X_{i_{3}}\right) \right)%
	\end{array}%
	\right) dx\right],
	\ee

where $k_{1}=k_{1}\left( \beta\right) \sim n,n^{\left( 3/2-2\beta \right) /\left(
	1+4\beta \right) }\leq k_{2}=k_{2}\left( \beta\right) \leq n^{\left( 3/2+2\beta
	\right) /\left( 1+4\beta \right) }$ and $k_{3}=k_{3}\left( \beta\right) \sim
n^{2/\left( 1+4\beta \right) }$ and $\mathbb{V}_n\left(h(X_{i_1},X_{i_2},X_{i_3})\right)$ corresponds to to the U-statistic based on $h$ i.e. $\mathbb{V}_n\left(h(X_{i_1},X_{i_2},X_{i_3})\right)={\nthree}\sum \limits_{i_1 \neq i_2 \neq i_3}{h(X_{i_1},X_{i_2},X_{i_3})}$; see \cite{tchetgen2008minimax} for more details. By \cite{tchetgen2008minimax}, the bias of this 
estimator is given by 

\be
\ & \Ef\left(\widehat{\phi }_{k_{1}\left( \beta \right) ,k_{2}\left( \beta \right)
		,k_{3}\left( \beta \right) }\right)-\phi(f)\\&=\int \left( f_{k_{3}}-f_{k_{2}}\right) ^{3}+3\int
\left( f_{k_{3}}-f_{k_{2}}\right) ^{2}\left( f_{k_{2}}-f_{k_{1}}\right)
+\int \left( f^{3}-f_{k_{3}}^{3}\right) \\
\label{eqn:bias_third_order}
\ee 
which is dominated by $$\int \left(
f^{3}-f_{k_{3}}^{3}\right) \sim k_{3}^{-2\beta }\leq C\frac{k_3}{n^2}.$$ 
Now note that, 

\be
\sup_{f \in H(\beta,C)}|\Ef(\widehat{\phi }_{k^{(2)}_{1} ,k^{(2)}_{2}
				,k^{(2)}_{3}}-\Ef(\widehat{\phi }_{k^{(1)}_{1} ,k^{(1)}_{2}
		,k^{(1)}_{3}}|&\sim \left(k_3^{(1)}\right)^{-2\beta}, \label{eqn:comparative_bias_third_order}
		\ee whenever $k^{(1)}=\left(k_1^{(1)},k_2^{(1)},k_3^{(1)}\right)\leq k^{(2)}=\left(k_1^{(2)},k_2^{(2)},k_3^{(2)}\right)$ where we say $v_1 \leq v_2$ for two vectors $v_1,v_2 \in \mathbb{R}_+^s$ whenever $\max v_1 \leq \max v_2$. 
As shown by \cite{tchetgen2008minimax}, the variance is also dominated by $\frac{k_3}{n^2}$. Therefore, similar to the discussion in Section 2, the bias and variance of the candidate estimator is similar to that of the estimator of the quadratic functional of the density. We will use this fact to produce an adaptive estimator. The next result is the main result of this section and helps us apply Theorem \ref{theorem_modified_lepski} to construct an adaptive estimator of $\phi(f)=\int f^3 d\mu$.
\begin{theorem}\label{theorem_fcube_moment}
For any $k_1\leq k_2 \leq k_3$ as above, one has for all sufficiently large $n$,
$$\Ef\left(\widehat{\phi }_{k_{1} ,k_{2},k_{3}}-\Ef\left(\widehat{\phi }_{k_{1} ,k_{2},k_{3}}\right)\right)^{2q} \leq C(\psimax,\fmax,\fmin)\left(\frac{k_3}{n^2}\right)^q,$$
whenever the projection kernels in the construction are based on the Haar Basis.
\end{theorem}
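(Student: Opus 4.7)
The plan is to bound the $2q$-th central moment of each of the five symmetric third-order U-statistics that make up $\widehat{\phi}_{k_{1},k_{2},k_{3}}$ separately and then sum. For each such summand I will apply the Hoeffding decomposition to write it as a sum of three completely degenerate U-statistics of orders $1$, $2$, and $3$. A Rosenthal/Hoeffding-type moment inequality for degenerate U-statistics then reduces the task to controlling certain $L_{2}$- and $L_{\infty}$-type norms of the kernel and of its successive projections. The target is to show that every contribution is at most of order $(k_{3}/n^{2})^{q}$, with the dominant terms being those involving the highest resolution level $k_{3}$, in parallel with the bias analysis \eqref{eqn:bias_third_order}--\eqref{eqn:comparative_bias_third_order}.

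The critical use of the Haar basis comes from the identity $K_{k}(x,y)=k\sum_{j=1}^{k}\mathbb{I}(x,y\in B_{j}^{(k)})$ with $B_{j}^{(k)}=((j-1)/k,j/k]$, so that quantities such as $\int K_{k_{1}}(x,X_{i_{1}})K_{k_{1}}(x,X_{i_{2}})K_{k_{1}}(x,X_{i_{3}})\,dx$ collapse to $k_{1}^{2}\sum_{j}\mathbb{I}(X_{i_{1}},X_{i_{2}},X_{i_{3}}\in B_{j}^{(k_{1})})$. Because the level-$k_{l}$ dyadic partition refines the level-$k_{l-1}$ partition whenever $k_{l-1}\mid k_{l}$, the differences $K_{k_{l}}-K_{k_{l-1}}$ admit the same type of indicator representation, and every moment integral reduces to a combinatorial sum over bin-membership patterns. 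This yields $\iint K_{k}^{2}\sim k$ and analogous sharp bounds for the higher-order integrals $\iiint \widetilde{K}_{k_{1},k_{2},k_{3}}^{\,2}\,dP_{f}^{\otimes 3}$, from which the announced $(k_{3}/n^{2})^{q}$ scaling can be read off for the variance parts.

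To promote these pointwise kernel identities into $2q$-th moment bounds I will implement the binning argument of \cite{rltv2015}: condition on the vector $\In$ of counts of $X_{1},\dots,X_{n}$ in the level-$k_{1}$ bins and, when required, iterate the conditioning over the refined level-$k_{2}$ and level-$k_{3}$ partitions. Conditional on $\In$, the observations within each bin are i.i.d.\ from $f$ renormalized to that bin; the assumptions $\fmin\leq f\leq \fmax$ guarantee uniformly bounded conditional densities, and the standard multinomial concentration of $\In$ controls the bin counts. A degenerate-U-statistic moment inequality applied within each bin, combined with summation over bins and Jensen's inequality on the outer conditional expectation, then produces the claim for each of the five summands.

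The main obstacle, and the step where compact support of the wavelets is used most sharply, is the treatment of the mixed-resolution terms such as $\int K_{k_{1}}(x,X_{i_{1}})(K_{k_{3}}-K_{k_{1}})(x,X_{i_{2}})K_{k_{1}}(x,X_{i_{3}})\,dx$: here one must combine the level-$k_{1}$ bin structure of the outer factors with the level-$k_{3}$ fluctuations of the middle factor and exploit the fact that, conditionally on the level-$k_{1}$ bin of $X_{i_{2}}$, the function $(K_{k_{3}}-K_{k_{1}})(x,\cdot)$ is mean-zero on that bin. The nested dyadic bin structure of the Haar basis is exactly what makes this conditional centering clean and what permits the projection-by-projection moment estimates to telescope; extending the argument to general compactly supported wavelets would require a separate treatment of these cross-terms, which is precisely the step the paper leaves open.
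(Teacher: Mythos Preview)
Your proposal identifies the right building blocks (Hoeffding decomposition, the binning construction from \cite{rltv2015}, conditioning on $\In$, exploiting the Haar indicator structure), but the order in which you assemble them differs from the paper in a way that matters.

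You write that you will first apply a global Hoeffding decomposition to each of the five summands, obtain degenerate U-statistics of orders $1,2,3$, and then invoke a Rosenthal/Hoeffding-type moment inequality for degenerate U-statistics. Only afterwards do you bring in the binning. The paper proceeds in the opposite order: it first restricts to the diagonal blocks $\chi_{n,m}^{3}$ at resolution $M_{n}\asymp n\asymp k_{1}$ (for Haar this loses nothing since the whole kernel is supported there), writes $V_{n,1}=\sum_{m}V_{n,m}$, and conditions on $\In$. The essential point is that, given $\In$, the centered pieces $V_{n,m}-\Ef(V_{n,m}\mid\In)$ are \emph{conditionally independent across $m$}, which is what makes a conditional Rosenthal inequality across bins legitimate. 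This is what produces the two terms $\summ\Ef|\cdot|^{2q}$ and $\bigl(\summ\Ef|\cdot|^{2}\bigr)^{q}$ that are then controlled separately. Your phrase ``summation over bins and Jensen's inequality on the outer conditional expectation'' understates this: Jensen alone would give the wrong power of $M_{n}$; conditional independence plus Rosenthal is what keeps the rate at $(k_{3}/n^{2})^{q}$.

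Second, the Hoeffding decomposition in the paper is the \emph{conditional} one, performed within each bin with respect to the law $f\mathbb{I}_{\chi_{n,m}}/\pnm$, not the global one. The degenerate third-order piece $\vmthree$ then lives on $N_{n,m}$ observations, and the paper controls its $2q$-th moment via a dedicated lemma (Proposition~2.4 of \cite{gine2000exponential} plus a crude $\max\leq\sum$ step) yielding $\Ef|\text{3rd order U-stat}|^{q}\leq C_{q}m^{-q}\Ef|K|^{q}$. This is the place where one needs a concrete bound for third-order degenerate U-statistics, and you gloss over it as ``a Rosenthal/Hoeffding-type moment inequality''; the paper isolates it as a separate lemma precisely because the Adamczak-type bounds you allude to are hard to evaluate here.

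Finally, the paper does \emph{not} iterate the conditioning over the finer level-$k_{2}$ and level-$k_{3}$ partitions; a single conditioning at resolution $M_{n}\asymp k_{1}$ suffices. The mixed-resolution integrals are handled by direct counting of how many level-$k_{3}$ subintervals meet a fixed level-$k_{1}$ bin, not by a second layer of conditioning. Your last paragraph correctly locates the delicate step, but the resolution is combinatorial rather than probabilistic.
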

Now combining \eqref{eqn:bias_third_order}, \eqref{eqn:comparative_bias_third_order}, Theorem \ref{theorem_fcube_moment} with Theorem \ref{theorem_modified_lepski}, we can construct an adaptive estimator of cubic integral functional as follows. \\

 We use the ideas laid down heuristically in Section 2 and detailed in Section 4. In particular, fix a $d>1$. Let $N$ be the largest integer such that $d^{N-1}\leq n^{1-\frac{2}{\log{\log{n}}}}$. Set $l_{j-1}=d^{j-1} n $ for $j=1,\ldots, N$. Now, for $j=0,\ldots,N-1$, define $\beta_j$ to be the solution of $l_j=n^{\frac{2}{1+4\beta_j}}$. Further, define $l_j^*=\frac{l_j}{\log{n}^{\frac{1}{1+4\beta_j}}}=\left(\frac{n^2}{\log{n}}\right)^{\frac{1}{1+4\beta_j}}$ and $R(l_j^*)=\frac{l_j^*}{n^2}$. Let $s^*=s^*(n)$ be the smallest integer such that, $l(\beta_{s^*})^* \geq n$. Finally, define 
 \be
 \jhat&:=\min\left\{j: \hat{I}^2(l_j^*,l_m^*)\leq C_{\mathrm{opt}}^2 \log{n} R(k_m^*) \ \forall \ m\geq j, s^*\geq j \leq N-1 \right\}.
 \ee
 where $C_{\mathrm{opt}}$ will be a deterministic constant and similar to Section 3, $\hat{I}(l_j,l_m)=U_{l_m}-U_{l_j}$ for $l_j<l_m$. Now let
 $$\hat{\phi}=\hat{\phi}_{\hat{k_1},\hat{k_2},\hat{k_3}},$$
 where $\hat{k_1} \sim n$, $\hat{k_2} \sim n^{\frac{\frac{3}{2}+2\beta_{\jhat}}{1+4\beta_{\jhat}}}$ and $\hat{k_3} \sim l_{\jhat}^*$.
\begin{theorem}\label{theorem_adptation_cubic}
Consider projection kernels based on Haar wavelets. The estimator $\hat{\phi}$ of $\phi(f)=\int f^3$ satisfies for any $\beta \in (0,\frac{1}{4})$ and $\epsilon>0$, 
$$\sup_{f \in H(\beta,C)} \Ef\left[\left(\hat{\phi}-\phi(f)\right)^2\right]\lesssim n^{-\frac{8\beta}{1+4\beta}}\log{n}^{\frac{4(\beta+\epsilon)}{1+4(\beta+\epsilon)}},$$
for some deterministic $C_{\mathrm{opt}}$ (depending only on parent wavelets and $\fmax$).
\end{theorem}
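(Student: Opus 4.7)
The plan is to verify that the cubic estimator sequence $\{\hat{\phi}_{k_1(\beta),k_2(\beta),k_3(\beta)}\}$ satisfies the two hypotheses of Theorem \ref{theorem_modified_lepski}, and then invoke that theorem directly. Identifying the Lepski index with the largest resolution $k_3(\beta)\sim n^{2/(1+4\beta)}$ (and letting $k_1\sim n$, $k_2\sim n^{(3/2+2\beta)/(1+4\beta)}$ be determined by $\beta$), the construction of $\hat{\phi}$ in the statement matches exactly the form $U_{n,k_{\jhat}^*}$ appearing in the conclusion of Theorem \ref{theorem_modified_lepski}, because $\hat{k}_3\sim l_{\jhat}^*$ is the Lepski-selected resolution while $\hat{k}_1,\hat{k}_2$ are the prescribed companions at the selected smoothness $\beta_{\jhat}$.

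For condition (1), the moment bound, I would take $q$-th roots in Theorem \ref{theorem_fcube_moment} to obtain
\[
\Ef^{1/q}\bigl[(\hat{\phi}_{k_1,k_2,k_3} - \Ef\hat{\phi}_{k_1,k_2,k_3})^{2q}\bigr] \leq C(\psimax,\fmax,\fmin)^{1/q}\cdot\frac{k_3}{n^2},
\]
which matches the requirement with resolution $k=k_3$ and $C_q = C(\psimax,\fmax,\fmin)^{1/q}$. The restriction of the theorem statement to Haar kernels is precisely inherited from the hypothesis of Theorem \ref{theorem_fcube_moment}.

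For condition (2), the comparative bias, I would combine \eqref{eqn:bias_third_order} with \eqref{eqn:comparative_bias_third_order}. The former shows that $\Ef\hat{\phi}_{k_1,k_2,k_3}-\phi(f)$ is dominated by $\int(f^3-f_{k_3}^3)\sim k_3^{-2\beta}$, while the latter gives, for any two ordered resolution triples $k^{(1)}\leq k^{(2)}$, the sharp comparison $|\Ef\hat{\phi}_{k^{(2)}}-\Ef\hat{\phi}_{k^{(1)}}|\sim(k_3^{(1)})^{-2\beta}$. Since the quadratic bias difference $I(k_1,k_2)=\int f_{k_2}^2-\int f_{k_1}^2\sim k_1^{-2\beta}$ is of the same order upon identifying $k_1\leftrightarrow k_3^{(1)}$ and $k_2\leftrightarrow k_3^{(2)}$, this yields $|I_c(k_1,k_2)|\leq C\,I(k_1,k_2)$ uniformly over $f\in H(\beta,C)$ for sufficiently large resolutions, as needed.

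With both conditions verified, Theorem \ref{theorem_modified_lepski} delivers exactly the claimed adaptive bound $n^{-8\beta/(1+4\beta)}(\log n)^{4(\beta+\epsilon)/(1+4(\beta+\epsilon))}$ for any $\epsilon>0$ and any $\beta\in(0,1/4)$, with the deterministic constant $C_{\mathrm{opt}}$ inherited from that theorem. The main technical obstacle in this chain was already Theorem \ref{theorem_fcube_moment}: controlling $2q$-th central moments of a third-order U-statistic with wavelet-projection kernels required the binning argument over Haar partitions described in Section 4. Once that moment bound is in hand, and given that comparative-bias calculations for cubic functionals are a standard consequence of the Taylor expansion argument of \cite{tchetgen2008minimax}, the present theorem reduces to checking hypotheses.
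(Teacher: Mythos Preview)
Your proposal is correct and follows essentially the same route as the paper's own proof: both reduce the cubic case to Theorem~\ref{theorem_modified_lepski} by parametrizing the three truncation levels through the dominant resolution $k_3$, verifying the moment condition via Theorem~\ref{theorem_fcube_moment} and the comparative-bias condition via \eqref{eqn:bias_third_order} and \eqref{eqn:comparative_bias_third_order}. If anything, your write-up supplies more detail than the paper, which omits the hypothesis-checking ``for the sake of brevity.''
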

	
\begin{remark}
For the proof of Theorem \ref{theorem_fcube_moment}, the requirement of Haar wavelet arises at one specific instance in the proof. We expect that similar results continue to hold for more general compactly supported wavelets using arguments developed herein. However we do not further consider other wavelet bases. 
\end{remark}

\section{Minimax Adaptive Estimation of $\int T(f) d\mu$}\hspace*{\fill} \\
We now discuss adaptive estimation of a more general integral functional of the density. As mentioned earlier that such functionals arise naturally in information theory. In particular, a large body of work has focused on estimating the Shannon entropy \citep{beirlant1997nonparametric} and more recent works include estimation of Renyi and Tsallis entropies \citep{leonenko2010statistical,pal2010estimation}. As mentioned, we will use ideas developed so far for adaptive estimation of up to cubic functionals to come up with an adaptive estimator of smooth non-linear functionals. We only sketch the idea here and omit technical details. In particular, suppose that $T \left( f\right) $ admits an expansion around $f_{0}$ such that
\be T \left( f\right) \left( x\right)&=T \left( f_{0}\right) \left(
x\right) +T^{\left( 1\right) }\left( f_{0}\right) \left( x\right) \left(
f\left( x\right) -f_{0}\left( x\right) \right)+T ^{\left( 2\right)
}\left( f_{0}\right) \left( f\left( x\right) -f_{0}\left( x\right) \right)
^{2}\\
&+T ^{\left( 3\right) }\left( f_{0}\right) \left( f\left( x\right)-f_{0}\left( x\right) \right) ^{3}+O\left( \left( f\left( x\right)
-f_{0}\left( x\right) \right) ^{4}\right)
\ee as $f\left( x\right) \rightarrow
f_{0}\left( x\right) .$ The idea now is to sample split and obtain an adaptive estimator $\widehat{f}\left( x\right) $ of $f\left( x\right) $ so that $\left( \widehat{f}\left( x\right)
-f\left( x\right) \right) =O_{p}\left( n^{-2\beta /\left( 1+2\beta \right)}\right) $. Next,
\be
\phi \left( f\right) &=\int T \left( \widehat{f}\right) \left(
x\right) +\int T ^{\left( 1\right) }\left( \widehat{f}\right) \left(
x\right) \left( f\left( x\right) -\widehat{f}\left( x\right) \right) +\int
 T^{\left( 2\right) }\left( \widehat{f}\right) \left( f\left( x\right) -
\widehat{f}\left( x\right) \right) ^{2}\\
&+\int T ^{\left( 3\right) }\left( \widehat{f}\right) \left( f\left(
x\right) -\widehat{f}\left( x\right) \right) ^{3}+O\left( \int \left(
f\left( x\right) -\widehat{f}\left( x\right) \right) ^{4}\right) .
\ee
Note that, $\int \left( f\left( x\right) -\widehat{f}\left( x\right) \right)
^{4}=O_{P}\left( n^{-8\beta /\left( 1+2\beta \right) }\right) =o_p\left(n^{-8\beta
/\left( 1+4\beta \right) }\right)$. Therefore, we need only learn how to adapt to functionals of the form $\int
g_{1}\left( \widehat{f}\right) f\left( x\right) +\int g_{2}\left( \widehat{f}%
\right) f\left( x\right) ^{2}+\int g_{3}\left( \widehat{f}\right) f\left(
x\right) ^{3}.$
The linear functional estimation theory is well understood and the quadratic or cubic terms
are rather straightforward generalizations of our statistics. For the
quadratic term for instance, use $\mathbb{V}_{n}\left[ \int g_{2}\left( 
\widehat{f}\right) \left( x\right) K_k\left( x,X_{i_{1}}\right) K_k\left(
x,X_{i_{2}}\right) dx\right] $ instead of the one used earlier. 
\section{Adaptation Lower bound for Estimation of $\int T(f) d\mu$}\hspace*{\fill} \\
In this section, we implement ideas similar to \cite{efromovich1996bickel} to provide a lower bound on the required price to be paid for adaptation over $\beta<\frac{1}{4}$. \cite{efromovich1996bickel} proved that, while estimating a quadratic functional of the density, if an estimator achieves a parametric rate for $\beta\geq \frac{1}{4}$ then it must incur a heavier penalty than $(\log{n})^{\frac{4\beta}{1+4\beta}}$ for $\beta<\frac{1}{4}$. Here we provide a result of similar flavor regarding constraints on estimation rates at two points $\beta_1,\beta_2 <\frac{1}{4}$. 

We begin by describing the main tool in our proof, which is a general version of constrained risk inequality due to \cite{cai2011testing}, obtained as an extension of \cite{brown1996constrained}. For the sake of completeness, begin with a Summary of these results. Suppose $Z$ has distribution $\mathbb{P}_{\theta}$ where $\theta$ belongs to some parameter space $\Theta$. Let $\hat{Q}=\hat{Q}(Z)$ be an estimator of a function $Q(\theta)$ based on $Z$ with bias $B(\theta):=\mathbb{E}_{\theta}(\hat{Q})-Q(\theta)$. Now suppose that $\Theta_0$ and $\Theta_1$ form a disjoint partition of $\Theta$ with priors $\pi_0$ and $\pi_1$ supported on them respectively. Also, let $\mu_i=\int Q(\theta)d\pi_i$ and $\sigma_i^2=\int (Q(\theta)-\mu_i)^2d\pi_i$, $i=0,1$ be the mean and variance of $Q(\theta)$ under the two priors $\pi_0$ and $\pi_1$. Letting $\gamma_i$ be the marginal density with respect to some common dominating measure of $Z$ under $\pi_i$, $i=0,1$, let us denote by $\mathbb{E}_{\gamma_0}(g(Z))$ the expectation of $g(Z)$ with respect to the marginal density of $Z$ under prior $\pi_0$ and distinguish it from $\mathbb{E}_{\theta}(g(Z))$, which is the expectation under $\mathbb{P}_{\theta}$. Lastly, denote the chi-square divergence between $\gamma_0$ and $\gamma_1$ by $\chi=\left\{\mathbb{E}_{\gamma_0}\left(\frac{\gamma_1}{\gamma_0}-1\right)^2\right\}^{\frac{1}{2}}$. Then we have the following result.
\begin{prop}[\cite{cai2011testing}]\label{prop_cai}
If $\int \mathbb{E}_{\theta}\left(\hat{Q}(Z)-Q(\theta)\right)^2d\pi_0(\theta)\leq \epsilon^2$, then
$$\vert\int B(\theta)d\pi_1(\theta)-\int B(\theta)d\pi_0(\theta)\vert\geq |\mu_1-\mu_0|-(\epsilon+\sigma_0)\chi.$$
\end{prop}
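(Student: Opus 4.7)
The plan is to reduce the statement to a Cauchy--Schwarz estimate against the chi-square divergence $\chi$, after rewriting the left-hand side in terms of marginal expectations of $\hat{Q}$. First I would observe that for $i \in \{0,1\}$,
$$\mathbb{E}_{\gamma_i}(\hat{Q}(Z)) \;=\; \int \mathbb{E}_{\theta}(\hat{Q}(Z))\, d\pi_i(\theta) \;=\; \mu_i + \int B(\theta)\, d\pi_i(\theta),$$
so rearranging gives the identity
$$\int B\, d\pi_1 - \int B\, d\pi_0 \;=\; \bigl(\mathbb{E}_{\gamma_1}(\hat{Q}) - \mathbb{E}_{\gamma_0}(\hat{Q})\bigr) - (\mu_1 - \mu_0).$$
By the reverse triangle inequality it then suffices to establish the marginal bound $\bigl|\mathbb{E}_{\gamma_1}(\hat{Q}) - \mathbb{E}_{\gamma_0}(\hat{Q})\bigr| \leq (\epsilon + \sigma_0)\chi$.

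For the key estimate I would exploit that $\mathbb{E}_{\gamma_0}(\gamma_1/\gamma_0 - 1) = 0$, which allows me to subtract any deterministic constant inside the integrand without changing its value; the crucial choice is to center by $\mu_0$. This gives
$$\mathbb{E}_{\gamma_1}(\hat{Q}) - \mathbb{E}_{\gamma_0}(\hat{Q}) \;=\; \mathbb{E}_{\gamma_0}\!\left[(\hat{Q}(Z) - \mu_0)\left(\frac{\gamma_1(Z)}{\gamma_0(Z)} - 1\right)\right].$$
Cauchy--Schwarz then produces the factor $\chi$ alongside $\sqrt{\mathbb{E}_{\gamma_0}(\hat{Q}(Z) - \mu_0)^2}$. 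To bound the latter I would pass to the Bayesian joint distribution in which $\theta \sim \pi_0$ and $Z \mid \theta \sim \mathbb{P}_\theta$, and apply Minkowski's inequality to the split $\hat{Q}(Z) - \mu_0 = (\hat{Q}(Z) - Q(\theta)) + (Q(\theta) - \mu_0)$. Since $\hat{Q}(Z)$ depends only on $Z$, the marginal second moment under $\gamma_0$ equals the corresponding joint second moment; by the risk hypothesis the first summand has $L^2$-norm at most $\epsilon$, while the second has $L^2$-norm exactly $\sigma_0$. Combining these yields $\sqrt{\mathbb{E}_{\gamma_0}(\hat{Q} - \mu_0)^2} \leq \epsilon + \sigma_0$, and hence the desired bound.

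The only delicate step is the choice of centering constant in Cauchy--Schwarz. A naive choice such as $c = 0$ would leave $\sqrt{\mathbb{E}_{\gamma_0}(\hat{Q}^2)}$, which is generally too large because it conflates the intrinsic magnitude of $Q$ with the estimator's error. Centering by $\mu_0$ cleanly decouples the two contributions, letting the pointwise risk constraint enter through $\epsilon$ and the dispersion of $Q(\theta)$ under $\pi_0$ enter through $\sigma_0$. Once the correct centering is identified, the rest of the argument is a direct computation, so I do not anticipate a serious obstacle beyond bookkeeping with the joint-versus-marginal expectations.
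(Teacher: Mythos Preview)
Your proof is correct and is exactly the standard argument for this constrained risk inequality. The paper does not supply its own proof of this proposition at all---it is quoted as a tool from \cite{cai2011testing} (extending \cite{brown1996constrained})---so there is nothing to compare against beyond noting that your derivation via the centering $\hat{Q}-\mu_0$, Cauchy--Schwarz against $\gamma_1/\gamma_0-1$, and Minkowski on the Bayes joint is precisely the route taken in the original reference.
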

Since the maximum risk is always at least as large as the average risk, this immediately yields a lower bound on the minimax risk. In order to use this result, we will need to produce suitable priors on appropriate parameter spaces, so that we capture the price needed for adaptation. Since the uniform density $f_0\equiv \I(0,1)$ is in all the smoothness classes we consider here, we take $\Theta_0=\{f_0^{(n)}\}$ and the prior $\pi_0=\delta_{f_0^{(n)}}$, the dirac mass at the uniform density. Here and below $f^{(n)}$ refers to the joint density of $X_1,\ldots,X_n$ which are i.i.d with density $f$. Now let us construct the alternative parameter space along the lines of \cite{efromovich1996bickel}, which in turn relies on \cite{ingster1987minimax}. Take $h$ to be a function supported on $[0,1]$ such that $\int h =0$, $\int h^2=c$ for some $c>0$ to be specified later, $h \in H(1,C)$, and $1+h \geq 0$. Let $v_n$ be an increasing sequence of positive integers, to be specified later up to multiplicative constants, and denote by $\overline{a}$ the vector $(a_0,\ldots,a_{v_n-1})\in \{-1,+1\}^{v_n}$. Now define $$f_{\overline{a}}=f_0+\sum_{i=0}^{v_n-1}a_i v_n^{-\beta}h(v_n x-i),$$ and let $\Theta_1=\{f^{(n)}_{\overline{a}}: \overline{a}\in \{-1,+1\}^{v_n}\}.$ First,note that by construction one always has $f_{\overline{a}} \in H(\beta,C)$. Finally let $\pi_1$ be the uniform prior putting $\frac{1}{2^{v_n}}$ mass at each point of $\Theta_1$. In order to apply Proposition \ref{prop_cai}, we evaluate the following quantities. Using the notation introduced earlier, $$\mu_0=\int T(f_0(x))dx=T(1)$$ and $\sigma_0=0$. Also,
 $$\mu_1=\frac{1}{2^{v_n}}\mathlarger{\mathlarger{\sum}}\limits_{\overline{a}\in \{-1,+1\}^{v_n}}\int T\left(f_0(x)+\sum_{i=0}^{v_n-1}a_i v_n^{-\beta}h(v_n x-i)\right)dx.$$ Therefore $$|\mu_1-\mu_0|=\frac{1}{2^{v_n}}\mathlarger{\mathlarger{\sum}}\limits_{\overline{a}\in \{-1,+1\}^{v_n}}\int \left[T\left(1+\sum_{i=0}^{v_n-1}a_i v_n^{-\beta}h(v_n x-i)\right)-T(1)\right]dx.$$ Now by Taylor expansion, for each $\overline{a}$, 
\be
 \ & \int \left[T\left(1+\sum_{i=0}^{v_n-1}a_i v_n^{-\beta}h(v_n x-i)\right)-T(1)\right]\\
 &=T'(1)\sum_{i=0}^{v_n-1}a_i v_n^{-\beta}\int h(v_n x-i)dx+T''(1)\int \left[\sum_{i=0}^{v_n-1}a_i v_n^{-\beta} h(v_n x-i)\right]^2 dx \\
 &+ \int T'''(\xi(x))\left[\sum_{i=0}^{v_n-1}a_i v_n^{-\beta} h(v_n x-i)\right]^3 dx  \ \text{where}\ {\|\xi-1\|_{\infty}\leq 1}\\ 
&\geq C_T v_n^{-2\beta}
\ee
for some constant $C_T>0$ depending only on $T$. The above calculations follow by the inclusion of the support of $h$ in $[0,1]$, boundedness of $h$ and standard change of variables. Therefore, $|\mu_1-\mu_0| \geq C v_n^{-2\beta}$ as well, where we have dropped the subscript $T$ for notational convenience. Now suppose that the bias at $f_0$ is smaller than $\epsilon_n \lesssim \left(\frac{n}{\sqrt{\log{n}}}\right)^{-\frac{4\beta^*}{4\beta^*+1}}$ for some $\beta^* <\frac{1}{4}$. Then for any for any $\beta<\beta^*$, we have by proportion \ref{prop_cai} that $\int B(\theta)d\pi_1(\theta)\geq C v_n^{-2\beta}-\epsilon_n \chi$. Therefore, if we can show that for $v_n = \left(\frac{n}{\sqrt{c\log{n}}}\right)^{\frac{2}{1+4\beta}}$ one has $\chi\ll n^{c}$, then one gets the desired rate for the incurred penalty by choosing $c$ small enough. This can be derived following the arguments in \cite{ingster1987minimax}. Therefore, we have sketched the proof of the following theorem.
\begin{theorem}\label{theorem_lower_bound}
	Suppose  one has 
	$$\sup_{f \in H(\beta^*)}\Ef\left(\hat{\phi}-\phi(f)\right)^2 \lesssim \left(\frac{n^2}{\log{n}}\right)^{-\frac{4\beta^*}{1+4\beta^*}},$$
	for an estimator $\hat{\phi}$ of $\phi(f)=\int T(f)d\mu$ with $T$ having bounded third derivative. Then for any $\beta>\beta^*$, 
	 $$\sup_{f \in H(\beta)}\Ef\left(\hat{\phi}-\phi(f)\right)^2 \gtrsim \left(\frac{n^2}{\log{n}}\right)^{-\frac{4\beta}{1+4\beta}}.$$
\end{theorem}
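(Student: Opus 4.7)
The plan is to make the text's own sketch fully rigorous by applying Proposition~\ref{prop_cai} to the two-point prior construction already introduced before the theorem statement. Three quantitative claims need to be checked: (a) the hypothesized MSE rate at $H(\beta^*)$ transfers, via $f_0 \in H(\beta^*, C)$, to a bias bound $|B(f_0)| \leq \epsilon_n \lesssim (n/\sqrt{\log n})^{-4\beta^*/(1+4\beta^*)}$; (b) the Bayes gap satisfies $|\mu_1 - \mu_0| \asymp v_n^{-2\beta}$; and (c) the chi-square divergence between the $n$-fold marginal laws under $\pi_0$ and $\pi_1$ is at most $n^{c'}$ with $c'$ arbitrarily small. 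Claims (a) and (b) are essentially carried out in the preamble, the former by the standard bias-variance decomposition (since $\sigma_0 = 0$), and the latter by a third-order Taylor expansion of $T$ around $1$: the linear term vanishes because $\int h = 0$, disjoint supports reduce the quadratic term to the diagonal contribution $c\, T''(1)\, v_n^{-2\beta}$, and the cubic remainder is $O(v_n^{-3\beta})$ by the uniform bound on $T'''$.

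The main obstacle is claim (c). Here I would follow the Ingster-style chi-square computation. By the disjoint supports of the translated bumps $h(v_n \cdot - i)$ and the product structure of $f_{\overline{a}}^{(n)}$,
\begin{equation*}
\chi^2 + 1 = \mathbb{E}_{\overline{a}, \overline{a}'} \langle f_{\overline{a}}, f_{\overline{a}'}\rangle_{L_2}^n = \mathbb{E}_{\overline{a}, \overline{a}'} \Bigl(1 + c\, v_n^{-1-2\beta} \sum_{i=0}^{v_n-1} a_i a'_i\Bigr)^n,
\end{equation*}
the cross terms $i \neq i'$ dropping out by disjointness. Applying $\log(1+u) \leq u$, then independence of the Rademacher coordinates of $\overline{a}, \overline{a}'$ together with $\cosh(u) \leq e^{u^2/2}$, yields $\chi^2 + 1 \leq \exp(c^2\, n^2\, v_n^{-1-4\beta}/2)$. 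Inserting $v_n \asymp (n/\sqrt{c \log n})^{2/(1+4\beta)}$ makes the exponent a constant multiple of $c\, \log n$, hence $\chi \leq n^{c'}$ with $c'$ as small as desired by shrinking $c$. The delicate bookkeeping is to pick $c$ small enough that $\epsilon_n \chi = o(v_n^{-2\beta})$; this is exactly the source of the logarithmic factor baked into the final rate.

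Finally I would assemble the pieces. Proposition~\ref{prop_cai} gives
\begin{equation*}
\Bigl|\int B\, d\pi_1 - B(f_0)\Bigr| \geq C\, v_n^{-2\beta} - \epsilon_n \chi,
\end{equation*}
and the choice of $c$ above makes the right-hand side at least $(C/2)\, v_n^{-2\beta}$ for large $n$. Since $|B(f_0)| = o(v_n^{-2\beta})$ as well, this forces $|\int B\, d\pi_1| \gtrsim v_n^{-2\beta}$. Jensen's inequality bounds $(\int B\, d\pi_1)^2 \leq \int \mathbb{E}_f(\hat{\phi} - \phi(f))^2\, d\pi_1(f) \leq \sup_{f \in H(\beta, C)} \mathbb{E}_f(\hat{\phi} - \phi(f))^2$, since $\pi_1$ is supported on $H(\beta, C)$ by construction. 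Squaring the bias lower bound then delivers the claimed risk lower bound $\sup_{f \in H(\beta, C)} \mathbb{E}_f(\hat{\phi} - \phi(f))^2 \gtrsim v_n^{-4\beta} \asymp (n^2/\log n)^{-4\beta/(1+4\beta)}$.
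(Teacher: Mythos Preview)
Your proposal is correct and follows precisely the route the paper sketches: Cai--Low constrained risk inequality (Proposition~\ref{prop_cai}) with $\pi_0$ the point mass at the uniform density and $\pi_1$ the Rademacher mixture of bump alternatives, a third-order Taylor expansion of $T$ around $1$ to extract the $v_n^{-2\beta}$ gap, and the Ingster chi-square bound. The one place you add content beyond the paper is the explicit computation of $\chi$, which the paper simply attributes to \cite{ingster1987minimax}; your derivation via $\chi^2+1=\mathbb{E}_{\overline{a},\overline{a}'}\langle f_{\overline{a}},f_{\overline{a}'}\rangle^n$, the inequality $\log(1+u)\le u$, and $\cosh(u)\le e^{u^2/2}$ is the standard way to make that step precise and is correct.
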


\section{Discussions}
Our results are all provided for one dimension $d=1$ corresponding to a low smoothness regime ${\beta} < \frac{d}{4}$. Although we do not provide explicit details, our proofs can be easily extended to incorporate the case of $d>1$ and to consider $\sqrt{n}$ rate of convergence of the constructed estimators for $\beta>\frac{d}{4}$. The case of $\beta=\frac{d}{4}$ is a little more subtle; however this also can be done by adding one more level of discretization near the truncation level $k=n$. Since, the crux of the arguments remain the same, we do not elaborate on such proofs here.

Following the idea of possibly modifying Lepski's method to include a larger class of mechanisms to choose a tuning parameter for adaptive estimation, another potentially interesting question is to investigate which method provides a better performance- either in a finite sample sense or regarding constants of asymptotic mean squared error. Finally, the study of adaptive non-parametric divergence and entropy based on more than one densities is also of interest. Since, the quadratic functional estimator is a U-statistics is based on a bounded kernel, the results of \cite{houdre2003exponential} were readily applicable for deriving a suitable exponential tail bound. It is worth exploring how to modify such arguments for second order U-statistics estimators that arise in context of non-parametric regression problems and fail to be bounded in case of unbounded error distributions. We keep the study of such questions for possible future research.

\appendix
\section{Proof of Theorems}
\subsection*{Proof of Theorem \ref{theorem_two_point_quadratic}}
\begin{proof}
	Recall that,
	\begin{eqnarray*}
		&&\sup_{f}E\left[ U_{n,\widehat{j}}-\phi \left( f\right) \right] ^{2} \\
		&\leq &\sum\limits_{k=0}^{1}\sup_{f}E\left\{ \I\left\{ \widehat{j}%
		=k\right\} \left[ U_{n,\widehat{j}}-\phi \left( f\right) \right] ^{2}\right\}
		\\
		&\equiv & \sup_f R_{0}(\beta_f)+\sup_f R_{1}(\beta_f)
	\end{eqnarray*}
	
	\subsubsection*{Higher Smoothness {$(\beta_f=\beta_0)$}}\hspace*{\fill} \\
	(i) First let us consider,
	\be 
	\sup_f R_{0}(\beta_0)&=\sup_{f \in H(\beta_0)}\Ef\left[\I(\jhat=0)\left(U_{n,0}-\phi(f)\right)^2\right]\\
	&\leq \sup_{f \in H(\beta_0)}\Ef\left[\left(U_{n,0}-\phi(f)\right)^2\right]\lesssim n^{-\frac{8\beta_0}{1+4\beta_0}}
	\ee
	by our choice of $U_{n,0}$.
	
	(ii) Next we consider $\sup_{f \in H(\beta_0)}R_1(\beta_0)$. For this part, we will bound $R_1(\beta_0)$ from above uniformly in all $f \in H(\beta_0)$. First note that by Holder's Inequality, for any conjugate pair $(p,q)$ we have 
	\be 
	R_1(\beta_0)&=\Ef\left[\I(\jhat=1)\left(U_{n,1}-\phi(f)\right)^2\right]\\
	& \leq \left\{\Pf\left(\I(\jhat=1)\right)\right\}^{\frac{1}{p}}\left\{\Ef\left[\left(U_{n,1}-\phi(f)\right)^{2q}\right]\right\}^{\frac{1}{q}}
	\ee
	The proof of $\sup_{f \in H(\beta_0)}R_1(\beta_0) \lesssim n^{-\frac{8\beta_0}{1+4\beta_0}}$ now follows by the following two lemmas.
	\begin{lemma}\label{lemma_wrong_jhat}
		For kernels based on compactly supported wavelet bases,
		$$\sup_{f \in H(\beta_0)}\Pf\left(\I(\jhat=1)\right) \lesssim \frac{1}{n}$$.
	\end{lemma}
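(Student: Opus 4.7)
The plan is to show that under $f\in H(\beta_0)$, the event $\{\jhat=1\}=\{\ihatk > C\sqrt{\log n}\sqrt{k_*}/n\}$ is driven by a deviation of a second order U-statistic from its mean by more than $\sqrt{\log n}$ times its standard deviation, which can be controlled by the exponential inequality of \cite{gine2000exponential, houdre2003exponential}.

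First, decompose $\ihatk = (\ihatk - \Ef \ihatk) + \Ef \ihatk$. By the bias analysis of the sequence $U_{n,k}$ for the quadratic functional, under $f\in H(\beta_0)$ we have $|\Ef \ihatk| = |\phi(f_{k_*})-\phi(f_{k_0})| \leq C I(k_0,k_*) \lesssim k_0^{-2\beta_0} = n^{-4\beta_0/(1+4\beta_0)}$. The threshold satisfies $\sqrt{\log n}\sqrt{k_*}/n \asymp (\log n)^{\frac{1}{2}-\frac{1}{2(1+4\beta_1)}} n^{-4\beta_1/(1+4\beta_1)}$, and since $\beta_0 > \beta_1$ the bias is negligible compared to the threshold for large $n$; in particular it is bounded by half the threshold.

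Next, observe that $\ihatk - \Ef \ihatk = \frac{1}{n(n-1)}\sum_{i\neq j}\bigl[(K_{k_*} - K_{k_0})(X_i,X_j) - \Ef(K_{k_*} - K_{k_0})(X_i,X_j)\bigr]$ is a centered second order U-statistic with kernel $h = K_{k_*} - K_{k_0}$. Its Hoeffding decomposition yields a linear part and a degenerate bilinear part, whose variances are both bounded by $C k_*/n^2$ by direct computation using the orthogonality properties of the projection kernels and the boundedness of $f$. For the relevant deviation level $t\asymp\sqrt{\log n}\sqrt{k_*}/n$ we have $t^2/(k_*/n^2) \asymp \log n$, so the Gaussian branch in the Gin\'e--Houdr\'e inequality dominates the branches controlled by $\|h\|_\infty$ and the various Rademacher-type norms (each of which contributes at most a polynomial factor in $k$ that is dwarfed by the assumed growth of $t$ in $n$ and $\log n$). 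Consequently
\be
\Pf\!\left(|\ihatk - \Ef\ihatk| > t\right) \;\leq\; C\exp\!\left(-c\,\frac{t^2 n^2}{k_*}\right) \;\leq\; C\exp(-cC^2\log n),
\ee
and taking the constant $C=C_{\mathrm{opt}}$ in the threshold sufficiently large makes this $\leq 1/n$.

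Combining the bias absorption with the concentration bound gives $\sup_{f\in H(\beta_0)}\Pf(\jhat=1)\lesssim 1/n$, as claimed. The main obstacle is the final step above: one must verify that among the four tail branches in the Gin\'e--Houdr\'e inequality for degenerate U-statistics (variance, sup-norm of the kernel, sup-norm of the linearized Rademacher process, and $L^2\to L^2$ operator norm), the variance branch is the binding one for $t$ of the indicated order. This is where the explicit wavelet bounds $\|K_k\|=1$, $\iint K_k^2 = k$, and $\|K_k\|_\infty \lesssim k$ are used, together with the fact that $k_* \ll n^2$ in the low smoothness regime.
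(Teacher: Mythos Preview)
Your sketch is essentially correct and follows the same route as the paper: absorb the bias (which under $H(\beta_0)$ is $O(k_0^{-2\beta_0})\ll \sqrt{\log n}\sqrt{k_*}/n$), then use Hoeffding decomposition and an exponential inequality for the degenerate second-order part. The paper carries out the final verification you flag as the ``main obstacle'' by explicitly computing the four constants $A_1,\ldots,A_4$ in the Houdr\'e--Reynaud-Bouret inequality (their Lemma~\ref{lemma_evaluation_A1_to_A4}) and checking that with $t=\log n$ the $A_1\sqrt t$ term dominates; your bounds $\|K_k\|=1$, $\iint K_k^2=k$, $\|K_k\|_\infty\lesssim k$ are exactly what is needed there.

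One small point to be explicit about: the Gin\'e--Houdr\'e inequality applies only to the \emph{degenerate} piece $T_2$, so you still owe a separate tail bound for the linear Hoeffding term $T_1=\frac{2}{n}\sum_i(R_i-\Ef R_i)$. The paper handles $T_1$ by a high-moment Markov argument (Marcinkiewicz--Zygmund plus the uniform bound $|R_i|\leq C_{\psi_0}\|f\|_\infty^2$), but a Bernstein inequality would do just as well since $R_i$ is bounded and $k_*\geq n$ makes the threshold large relative to $\mathrm{Var}(T_1)=O(1/n)$. Just make sure your write-up does not fold $T_1$ into the U-statistic exponential inequality, since that inequality requires complete degeneracy of the kernel.
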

	
	\begin{lemma}\label{lemma_error_lqnorm}
		For kernels based on compactly supported wavelet bases,
		$$\sup_{f \in H(\beta_0)}\left\{\Ef\left[\left(U_{n,1}-\phi(f)\right)^{2q}\right]\right\}^{\frac{1}{q}} \lesssim \frac{k_1}{n^2}$$.
	\end{lemma}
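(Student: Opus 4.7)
My plan is to split $U_{n,1}-\phi(f)$ into a centered stochastic part and a bias, and bound each in $L^{2q}$ separately. Since the projection kernel gives $\Ef[U_{n,1}] = \phi(f_{k_1})$, Minkowski's inequality in $L^{2q}$ reduces everything to bounding $\Ef[(U_{n,1}-\phi(f_{k_1}))^{2q}]$ and $|\phi(f_{k_1})-\phi(f)|^{2q}$ separately. For the bias, orthogonality of the wavelet projection yields $\phi(f)-\phi(f_{k_1}) = \|f-f_{k_1}\|_2^2 \lesssim k_1^{-2\beta_0}$ for $f \in H(\beta_0)$; raising to the $2q$-th power and using $k_1 = n^{2/(1+4\beta_1)}$ with $\beta_0 > \beta_1$ gives $k_1^{1+4\beta_0}\geq n^2$, hence $k_1^{-4q\beta_0}\leq (k_1/n^2)^q$, as desired.

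For the centered stochastic term I would apply the Hoeffding decomposition, writing $U_{n,1}-\phi(f_{k_1}) = L_n + D_n$ with $L_n = (2/n)\sum_i(f_{k_1}(X_i)-\phi(f_{k_1}))$ a linear statistic and $D_n$ the canonical degenerate second-order U-statistic with kernel $g_2(x,y) = K_{k_1}(x,y) - f_{k_1}(x) - f_{k_1}(y) + \phi(f_{k_1})$. Compact support of the wavelets yields $\sup_x\int|K_{k_1}(x,y)|dy \leq C$ uniformly in $k_1$, hence $\|f_{k_1}\|_\infty \lesssim \fmax$ and the summand of $L_n$ is uniformly bounded; Rosenthal's inequality then gives $\Ef[L_n^{2q}] \lesssim_q n^{-q}$, and the bound $n^{-q} \leq (k_1/n^2)^q$ follows from $k_1 \geq n$, which holds since $\beta_1 < 1/4$.

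The main technical step is bounding $\Ef[D_n^{2q}]$ for the degenerate piece. Here my plan is to invoke the exponential deviation inequality for canonical second-order U-statistics from \cite{gine2000exponential,houdre2003exponential} and integrate the tail bound, yielding a moment estimate of Houdr\'e--Reynaud-Bousquet form
\[
\Ef[D_n^{2q}] \leq C_q\Bigl((A/n)^{2q}+(B/n^{3/2})^{2q}+(C/n^{5/3})^{2q}+(D/n^2)^{2q}\Bigr),
\]
where $A=\|g_2\|_{L^2(f\otimes f)}$, $B$ is the operator norm of $g_2$ on $L^2(f)$, $C=\sup_x\|g_2(x,\cdot)\|_{L^2(f)}$, and $D=\|g_2\|_\infty$. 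For the wavelet projection kernel one has $\int\int K_{k_1}^2 = k_1$, operator norm $1$, $\sup_x\int K_{k_1}^2(x,y)dy\lesssim k_1$ by compact support, and $\|K_{k_1}\|_\infty \lesssim k_1$; boundedness of $f$ transfers these from Lebesgue to $f\otimes f$. Substituting makes $(A/n)^{2q} = (k_1/n^2)^q$ the dominant term, and the other three contributions are easily checked to be at most $(k_1/n^2)^q$ using $n \leq k_1 \leq n^2$.

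The main obstacle is this kernel-norm bookkeeping for the degenerate part: each of the four norms must be estimated carefully using the compact support and boundedness structure of the wavelet basis, and each resulting contribution verified to be dominated by $(k_1/n^2)^q$ in the relevant regime $n\leq k_1\leq n^2$ available for $0<\beta_1<1/4$. Once bias, linear, and degenerate pieces are all bounded by $(k_1/n^2)^q$, applying the triangle inequality in $L^{2q}$ and taking $q$-th roots delivers the claim $\{\Ef[(U_{n,1}-\phi(f))^{2q}]\}^{1/q} \lesssim k_1/n^2$ uniformly over $f \in H(\beta_0)$.
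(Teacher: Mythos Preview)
Your approach is essentially identical to the paper's: split into bias plus centered part, Hoeffding-decompose the centered part, control the linear piece by a Rosenthal/Marcinkiewicz--Zygmund inequality using $\|f_{k_1}\|_\infty\lesssim\|f\|_\infty$, and handle the degenerate piece via the Gin\'e/Houdr\'e--Reynaud-Bouret exponential inequality together with the same four kernel-norm estimates ($\int\!\!\int K_{k_1}^2\asymp k_1$, operator norm $O(1)$, $\sup_x\int K_{k_1}^2(x,\cdot)\lesssim k_1$, $\|K_{k_1}\|_\infty\lesssim k_1$). The only discrepancy is in the powers of $n$ you quote in the moment bound for $D_n$: from the tail inequality the four normalized coefficients are of order $A/n$, $B/n$, $C/n^{3/2}$, $D/n^2$ (not $B/n^{3/2}$ and $C/n^{5/3}$), but since these subdominant terms are all bounded by $(k_1/n^2)^q$ in the regime $n\le k_1\le n^2$, the conclusion is unaffected.
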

	
	\subsubsection*{Lower Smoothness {$(\beta_f=\beta_1)$}}\hspace*{\fill} \\
	
	(i) First note that,
	\be 
	\sup_f R_{1}(\beta_1)&=\sup_{f \in H(\beta_1)}\Ef\left[\I(\jhat=1)\left(U_{n,1}-\phi(f)\right)^2\right]\\
	&\leq \sup_{f \in H(\beta_1)}\Ef\left[\left(U_{n,1}-\phi(f)\right)^2\right]\lesssim n^{-\frac{8\beta_1}{1+4\beta_1}}
	\ee
	by our choice of $U_{n,1}$.\\

	(ii) Now,
	\be
	R_{0}(\beta_1)=\Ef\left( \mathbf{1}\left\{ \widehat{I}^{2}\left( k_{0},k_{1}^{\ast
	}\right) <C\ln \left( n\right) \frac{k_{1}^{\ast }}{n^{2}}\right\} \left[
	U_{n,0}-\phi \left( f\right) \right] ^{2}\right).
	\ee
	We consider three cases as below with $\nu _{n}=\ln
	(n)^{-1/\left( 8\beta _{1}+2\right) }$. In the calculations below $C, C^{'}, C^{''}, C^{'''}$ are arbitrary constants which can be chosen for the calculations to be valid and can change from place to place. Also for controlling higher order moments of related second order U-statistics below, we use results along the lines of Lemma \ref{lemma_error_lqnorm} with obvious modifications to the proof.\\
	
		\textbf{Case 1:} $\mathbf{I^{2}\left( k_{0},k_{1}^{\ast }\right) >C^{'}\left( 1+\nu _{n}\right)
		\ln \left( n\right) \frac{k_{1}^{\ast }}{n^{2}}}$ \\
	
		In this case we have,
		\be
		\ & R_{0}(\beta)\\
		&= \Ef\left(  
		\mathbf{1}\left\{ \widehat{I}^{2}\left( k_{0},k_{1}^{\ast }\right) <C\ln
		\left( n\right) \frac{k_{1}^{\ast }}{n^{2}}\right\} 
		\mathbf{1}\left\{ I^{2}\left( k_{0},k_{1}^{\ast }\right) >C^{'}\left( 1+\nu
		_{n}\right) \ln \left( n\right) \frac{k_{1}^{\ast }}{n^{2}}\right\} \left[
		U_{n,0}-\phi \left( f\right) \right] ^{2}\right) \\		
		&\leq  \Ef\left(  \left( 
		\mathbf{1}\left\{ I\left( k_{0},k_{1}^{\ast }\right) -\widehat{I}\left(
		k_{0},k_{1}^{\ast }\right) >\left( C^{''}\nu _{n}\right) I\left(
		k_{0},k_{1}^{\ast }\right) \right\} 
		\left[ U_{n,0}-\phi \left( f\right) \right] ^{2}\right) \right) \\
		&\leq C^{'''}\left( \nu _{n}I\left( k_{0},k_{1}^{\ast }\right) \right)
		^{-2}\Ef\left( \left( I\left( k_{0},k_{1}^{\ast }\right) -\widehat{I}\left(
		k_{0},k_{1}^{\ast }\right) \right) ^{2}\left[ U_{n,0}-\phi \left( f\right) 
		\right] ^{2}\right) \\
		&\leq C^{'''}\left( \nu _{n}I\left( k_{0},k_{1}^{\ast }\right) \right) ^{-2}
		\Ef^{1/2}\left( \left( I\left( k_{0},k_{1}^{\ast }\right) -\widehat{I}\left(
		k_{0},k_{1}^{\ast }\right) \right) ^{4}\right) \Ef^{1/2}\left( \left[
		U_{n,0}-\phi \left( f\right) \right] ^{4}\right) \\
		& \leq C^{'''}\left( \nu _{n}I\left( k_{0},k_{1}^{\ast }\right) \right) ^{-2}\left(
		n^{-2}k_{1}^{\ast }\right) \left( I\left( k_{0},k_{1}^{\ast }\right) +\left(
		k_{1}^{\ast }\right) ^{-2\beta _{1}}\right) ^{2}\leq C^{'''}\left( \nu _{n}\right) ^{-2}\left( n^{-2}k_{1}^{\ast }\right) \\
		&=C^{'''}\ln (n)^{1/\left( 4\beta _{1}+1\right) }\left( \frac{n^{2}}{\ln \left(
			n\right) }\right) ^{1/\left( 1+4\beta _{1}\right) }n^{-2}=C^{'''}n^{2/\left( 1+4\beta _{1}\right) }n^{-2}=C^{'''}n^{-\frac{8\beta_1}{1+4\beta_1}}
		\ee
		
			\textbf{Case 2:} $\mathbf{I^{2}\left( k_{0},k_{1}^{\ast }\right)\leq C\frac{k_1}{n^2}} $\\
		
	      In this case, 
	       $$\Ef\left( \left[ U_{n,0}-\phi \left( f\right) \right] ^{2}\right)
		=k_{0}^{-4\beta _{1}}+\frac{k_{0}}{n^{2}}\asymp I^{2}\left(
		k_{0},k_{1}\right) +\frac{k_0}{n^2} \leq C^{'}n^{-\frac{8\beta_1}{1+4\beta_1}}. $$
		
			\textbf{Case 3:} $\mathbf{I^{2}\left( k_{0},k_{1}^{\ast }\right) \leq C\left( 1+\nu _{n}\right)
				\ln \left( n\right) \frac{k_{1}^{\ast }}{n^{2}}}$ \textbf{and}
			$\mathbf{I^{2}\left( k_{0},k_{1}^{\ast }\right)\leq C\frac{k_1}{n^2}} $\\

			  In this case, $I^{2}\left( k_{0},k_{1}^{\ast }\right) <\left( 1+\nu
		_{n}\right) \ln \left( n\right) \frac{k_{1}^{\ast }}{n^{2}}$ and $%
		I^{2}\left( k_{0},k_{1}\right) >\frac{k_{1}^{\ast }}{n^{2}} $, 
		so that 
		\be 
		 \Ef\left( \left[ U_{n,0}-\phi \left( f\right) \right] ^{2}\right)	&\leq C^{'}\left( I^{2}\left( k_{0},k_{1}^{\ast }\right) +\left( k_{1}^{\ast
		}\right) ^{-4\beta _{1}}+\frac{k_{0}}{n^{2}}\right)  C^{''}\left( I^{2}\left(
		k_{0},k_{1}^{\ast }\right) +\left( k_{1}^{\ast }\right) ^{-4\beta _{1}}+%
		\frac{k_{0}}{n^{2}}\right) \\
		&\leq C^{'''}\ln \left( n\right) ^{1-1/\left( 1+4\beta _{1}\right) }n^{-8\beta
			_{1}/\left( 1+4\beta _{1}\right) }=C^{'''}n^{-8\beta _{1}/\left( 1+4\beta
			_{1}\right) }\ln \left( n\right) ^{4\beta _{1}/\left( 1+4\beta _{1}\right)
		}.
		\ee

\end{proof}

\subsection*{Proof of Theorem \ref{theorem_modified_lepski}}
\begin{proof}
	Suppose $\beta \in (\beta_{j+1},\beta_j]$.
	Let $l_c(C^*)$ be the largest $l$ such that $I^2(k_l^*,k_j^*)>C^* \log{n} R(k_j^*)$. Let us explain the reason for existence of such an $l_c(C^*)$. Note that it is enough to show that the ratio $\frac{I^2(k_l^*,k_j^*)}{C^* \log{n} R(k_j^*)}$ is upper bounded by a quantity that $l$ increases through positive integers. By our choice of $\beta \in (\beta_{j+1},\beta_j]$, we have that $\frac{I^2(k_l^*,k_j^*)}{C^* \log{n} R(k_j^*)}$ is at most a constant times $\frac{(k_l^*)^{-4\beta_{j+1}}}{\frac{k_j^*}{n^2}}$. The power of $n$ in this ration is $\frac{8\beta_{j}}{1+4\beta_j}-\frac{8\beta_{j+1}}{1+4\beta_l}$ which indeed decreases as $l$ increases. Also, note that trivially $l_c(C^*)<j$. Also, by definition of $l_c(C^*)$, for any $l\leq l_c$, $I^2(k_l^*,k_j^*)>C^* \log{n} R(k_j^*)$ and for $l>l_c(C^*)$, $I^2(k_l^*,k_j^*)\leq C^* \log{n} R(k_j^*)$. Our proof relies on the fact that the testing procedure does not select an index $l \leq l_c(C^*)$ or $l>j+1$ with high probability. This is captured by the following two lemmas.
	\begin{lemma}\label{lemma_l_less_than_lc}
		For kernels based on compactly supported wavelet bases, $$\sup_{f \in H(\beta)}\Pf\left(\jhat =l\right)\leq \frac{\const}{n},$$
		for any $l \leq l_c(C^*)$ whenever $C^*>4C_{\mathrm{opt}}^2$.
	\end{lemma}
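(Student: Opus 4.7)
The plan is to show that the event $\{\jhat = l\}$ forces a large deviation of $\hat I(k_l^*, k_j^*)$ from its mean $I_c(k_l^*, k_j^*)$, and then to control that deviation through the $2q$-th moment bound of assumption~(1). Since $l \leq l_c(C^*) < j$, on $\{\jhat = l\}$ the constraint in the definition of $\jhat$ at the index $m = j$ must hold, so
\[
|\hat I(k_l^*, k_j^*)| \leq C_{\mathrm{opt}} \sqrt{\log n\, R(k_j^*)}.
\]

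Next I would establish a matching lower bound on $|I_c(k_l^*, k_j^*)|$. By the definition of $l_c(C^*)$, $I^2(k_l^*, k_j^*) > C^* \log n\, R(k_j^*)$. Assumption~(2), combined with its implicit converse $I \lesssim |I_c|$ (which for the concrete estimator sequences of interest follows from explicit leading-order bias calculations, e.g.\ \eqref{eqn:comparative_bias_third_order} in the cubic case), yields $|I_c(k_l^*, k_j^*)| \gtrsim |I(k_l^*, k_j^*)|$. Thus, provided $C^*$ is taken large enough to absorb the implicit constant and satisfies $C^* > 4 C_{\mathrm{opt}}^2$, we obtain $|I_c(k_l^*, k_j^*)| \geq 2 C_{\mathrm{opt}} \sqrt{\log n\, R(k_j^*)}$. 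Combining with the preceding display and the triangle inequality,
\[
\Pf(\jhat = l) \leq \Pf\!\left( \bigl|\hat I(k_l^*, k_j^*) - I_c(k_l^*, k_j^*)\bigr| \geq C_{\mathrm{opt}} \sqrt{\log n\, R(k_j^*)} \right).
\]
Writing $\hat I(k_l^*, k_j^*) - I_c(k_l^*, k_j^*) = (U_{n, k_j^*} - \Ef U_{n, k_j^*}) - (U_{n, k_l^*} - \Ef U_{n, k_l^*})$, the $c_r$-inequality together with assumption~(1) gives
\[
\Ef\bigl[(\hat I(k_l^*, k_j^*) - I_c(k_l^*, k_j^*))^{2q}\bigr] \lesssim R(k_j^*)^q
\]
(using $k_l^* \leq k_j^*$), whence Markov's inequality yields $\Pf(\jhat = l) \lesssim C_q (C_{\mathrm{opt}}^2 \log n)^{-q}$. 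Choosing $q$ sufficiently large, with the constants $C_q$ tracked through the problem-specific moment calculation (e.g.\ Theorem \ref{theorem_fcube_moment} for the cubic functional), produces the desired $\const/n$ bound.

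The main obstacle is twofold. First, assumption~(2) provides only the one-sided estimate $|I_c| \leq C I$, whereas the argument crucially requires the matching lower bound $I \lesssim |I_c|$; this must be verified directly for each specific estimator by comparing its leading bias term to the bias of the quadratic estimator. Second, upgrading the Markov bound $(\log n)^{-q}$ to $n^{-1}$ requires $q$ to be chosen large (and possibly mildly growing with $n$), so the $q$-dependence of the moment constant $C_q$ in assumption~(1) must be controlled using the specific U-statistic structure — this is precisely where the wavelet-based binning argument of Theorem~\ref{theorem_fcube_moment} enters in the concrete applications.
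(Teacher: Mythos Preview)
Your argument has a genuine gap rooted in a misidentification of which estimators enter the definition of $\jhat$. The quantity $\hat I(k_l^*,k_j^*)=U_{k_j^*}-U_{k_l^*}$ in the definition of $\jhat$ is built from the \emph{quadratic} U-statistics of Section~3, not from the general estimators $U_{n,k}$ that satisfy assumptions~(1)--(2) of Theorem~\ref{theorem_modified_lepski}. That is precisely the point of the modified Lepski scheme: selection is done with the second-order statistics, for which sharp exponential tails are available, and only the final plug-in uses the general estimator. Two consequences follow.

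First, your invocation of assumption~(1) to bound the $2q$-th moment of $\hat I-I_c$ is the wrong tool. Even if one had such a moment bound for the quadratic estimators (one does, cf.\ Lemma~\ref{lemma_error_lqnorm}), the Markov step yields only $C_q(C_{\mathrm{opt}}^{2}\log n)^{-q}$, as you note. Upgrading this to $n^{-1}$ would require $q\asymp \log n/\log\log n$, at which point the constant $C_q$ (which for U-statistics grows at least like $q^{cq}$) swamps the gain; the argument does not close. The paper instead exploits the Houdr\'e--Reynaud exponential inequality for degenerate second-order U-statistics (Lemma~\ref{lemma_exponential_inequality_u_statistics} together with the bounds of Lemma~\ref{lemma_evaluation_A1_to_A4}), which gives a tail of the form $6e^{-t}$; setting $t=\log n$ produces $6/n$ directly, exactly as in the proof of Lemma~\ref{lemma_wrong_jhat}.

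Second, your detour through $I_c$ and the ``implicit converse'' of assumption~(2) is unnecessary: for the quadratic estimators one has $\Ef\,\hat I(k_l^*,k_j^*)=I(k_l^*,k_j^*)$ exactly, so the centering is $\hat I-I$, and the lower bound $|I(k_l^*,k_j^*)|>\sqrt{C^*\log n\,R(k_j^*)}$ follows immediately from the definition of $l_c(C^*)$. The paper then splits into the two cases $I>0$ and $I<0$ and reduces each to a one-sided deviation of size at least $(\sqrt{C^*}-C_{\mathrm{opt}})\sqrt{\log n\,R(k_j^*)}$, to which the exponential inequality applies once $C^*>4C_{\mathrm{opt}}^2$.
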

	
	\begin{lemma}\label{lemma_l_bigger_than_jplusone}
		For kernels based on compactly supported wavelet bases, $$\sup_{f \in H(\beta)}\Pf\left(\jhat \geq j+2\right)\leq \frac{\const}{n},$$
		whenever $C^*>4C_{\mathrm{opt}}^2$.
	\end{lemma}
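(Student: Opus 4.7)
The plan is to rewrite $\{\jhat \geq j+2\}$ as a single ``failure at level $l=j+1$'' event, apply a union bound over its potential witnesses $m$, and then bound each summand by splitting the deterministic bias from the random fluctuation.

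First I would note that by the definition of $\jhat$, the event $\{\jhat \geq j+2\}$ is exactly the event that $l=j+1$ fails the selection criterion. Since $\hat{I}(k_{j+1}^*, k_{j+1}^*) = 0$ the $m = j+1$ term cannot witness a failure, so
$$\{\jhat \geq j+2\} \;=\; \bigcup_{m=j+2}^{N-1}\bigl\{|\hat{I}(k_{j+1}^*, k_m^*)| > C_{\mathrm{opt}}\sqrt{\log n \cdot R(k_m^*)}\bigr\}.$$
A union bound (of length at most $N-1 \lesssim \log n$) reduces the task to bounding each summand by $\const/(n\log n)$.

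Next I would control the deterministic part. Condition (2) of Theorem \ref{theorem_modified_lepski} together with the wavelet approximation bound $\|f - f_k\|_2 \lesssim k^{-\beta}$ for $f \in H(\beta)$ gives $|I_c(k_{j+1}^*, k_m^*)| \leq C\,(k_{j+1}^*)^{-2\beta}$. Since the truth satisfies $\beta > \beta_{j+1}$ strictly and $(k_{j+1}^*)^{-2\beta_{j+1}} = \sqrt{\log n \cdot R(k_{j+1}^*)}$ by the very definition of $k_{j+1}^*$, one obtains $|I_c(k_{j+1}^*, k_m^*)| \leq C\sqrt{\log n \cdot R(k_m^*)}$ for every $m \geq j+1$. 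Choosing $C_{\mathrm{opt}}$ at least twice the constant from condition (2), the triangle inequality upgrades the event $\{|\hat{I}| > C_{\mathrm{opt}}\sqrt{\log n \cdot R(k_m^*)}\}$ to $\{|\hat{I} - I_c| > (C_{\mathrm{opt}}/2)\sqrt{\log n \cdot R(k_m^*)}\}$.

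The final step is a moment-based deviation inequality. Writing $\hat{I} - I_c = (U_{n, k_m^*} - \Ef U_{n, k_m^*}) - (U_{n, k_{j+1}^*} - \Ef U_{n, k_{j+1}^*})$ and applying the $C_r$-inequality together with condition (1) of Theorem \ref{theorem_modified_lepski}, I expect $\Ef|\hat{I} - I_c|^{2q} \lesssim C_q^q R(k_m^*)^q$. Markov's inequality then yields a single-term bound of order $(C/(C_{\mathrm{opt}}^2 \log n))^q$. Letting $q$ grow mildly with $n$ (for example $q \asymp \log n/\log \log n$, which is legitimate provided the $C_q$ grow only polynomially in $q$, as supplied by Theorem \ref{theorem_fcube_moment} in the Haar-wavelet setting) drives the sum of the $O(\log n)$ single-term bounds below $\const/n$, which is the desired conclusion.

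The main obstacle is the regime $\beta \searrow \beta_{j+1}$: the bias $|I_c|$ is then of the same order as the detection threshold, so splitting the event into bias-plus-fluctuation only closes once $C_{\mathrm{opt}}$ is taken strictly larger than the multiplicative constant in condition (2). A secondary technical point is that condition (1) only delivers polynomial moment control, which at any fixed $q$ would yield mere polylog tail decay; reaching a genuine $1/n$ probability therefore requires letting the moment order grow with $n$, and this is only viable because the higher-moment constants inherited from the wavelet-binning argument behind Theorem \ref{theorem_fcube_moment} remain polynomial in $q$.
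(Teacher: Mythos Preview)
Your overall skeleton (rewrite $\{\jhat\ge j+2\}$ as ``level $j{+}1$ fails'', union-bound over $m$, split bias from fluctuation) matches the paper. The gap is in the fluctuation step, and it stems from a misreading of what $\hat I$ is.

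In the definition of $\jhat$, the statistic $\hat I(k_{j+1}^*,k_m^*)=U_{k_m^*}-U_{k_{j+1}^*}$ is built from the \emph{quadratic} second--order U-statistic of Section~3, not from the generic estimator $U_{n,k}$ appearing in conditions~(1) and~(2) of Theorem~\ref{theorem_modified_lepski}. That is precisely the content of the ``modified'' Lepski scheme: one selects with the quadratic estimator because for it a genuine exponential tail is available. Accordingly, the paper's proof does not use moment bounds at all; after the same union bound and bias reduction you describe (with $I$, not $I_c$, as the centering), it invokes the Houdr\'e--Reynaud-Bouret exponential inequality for degenerate second--order U-statistics (Lemmas~\ref{lemma_exponential_inequality_u_statistics} and~\ref{lemma_evaluation_A1_to_A4}), sets $t=\log n$, and reads off a bound $\lesssim 1/n$ per term---exactly as in the proof of Lemma~\ref{lemma_wrong_jhat}.

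Your alternative route through condition~(1) and Markov with $q\to\infty$ does not close. First, condition~(1) is stated for a \emph{fixed} $q$; nothing in the hypotheses of Theorem~\ref{theorem_modified_lepski} controls how $C_q$ grows, so invoking it with $q=q(n)$ is not licensed by the theorem. Second, your appeal to Theorem~\ref{theorem_fcube_moment} for ``polynomial $C_q$'' is misplaced: that theorem concerns the cubic estimator, whereas $\hat I$ here is quadratic, and in any case the constants in its proof come from Rosenthal-type inequalities whose growth in $q$ is not polynomial. Third, even granting $C_q\asymp q$ (which is what the $\Gamma(q)$ term in the first proof of Lemma~\ref{lemma_error_lqnorm} actually gives), the Markov bound is of order $(C_q/(c\log n))^q$; with your choice $q\asymp \log n/\log\log n$ this is only $n^{-o(1)}$, not $n^{-1}$. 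One would need $q\asymp\log n$ together with $C_{\mathrm{opt}}$ large relative to the hidden Rosenthal constants---but at that point you have essentially re-derived the exponential inequality, which is what the paper uses directly.
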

	Let us first complete the proof of Theorem \ref{theorem_modified_lepski} assuming the validity of Lemmas \ref{lemma_l_less_than_lc} and \ref{lemma_l_bigger_than_jplusone}. Note that,
	\be
	\Ef\left[\left(U_{n,k_{\jhat}^*}-\phi(f)\right)^2\right]&=\sum_{l=s^*}^{N-1}\Ef\left[\I(\jhat=l)\left(U_{n,k_{l}^*}-\phi(f)\right)^2\right]\\
	&=T_1+T_2+T_3, \label{eqn:main_modified_lepski}
	\ee
	where $T_1=\sum\limits_{l=s^*}^{l_c(C^*)}\Ef\left[\I(\jhat=l)\left(U_{n,k_{l}^*}-\phi(f)\right)^2\right]$, $T_2=\sum\limits_{l=l_c(C^*)+1}^{j+1}\Ef\left[\I(\jhat=l)\left(U_{n,k_{l}^*}-\phi(f)\right)^2\right]$ and $T_3= \sum\limits_{l=j+2}^{N-1}\Ef\left[\I(\jhat=l)\left(U_{n,k_{l}^*}-\phi(f)\right)^2\right] $. In the following we control the terms $T_h$, $h=1,2,3$ individually to show the desired result modulo the proofs of the above two lemmas. We then finish the proof by proving the lemmas. For the following let $(p,q)$ denote a pair of positive real numbers such that $\frac{1}{p}+\frac{1}{q}=1$. The specific choices of the pair will be clear from the proof. In particular, we will always choose $q$ to be an integer and $p$ will be sufficiently close to $1$.
	\subsection*{Control of $T_1$}
	\be
	T_1&=\sum\limits_{l=s^*}^{l_c(C^*)}\Ef\left[\I(\jhat=l)\left(U_{n,k_{l}^*}-\phi(f)\right)^2\right]\\
	& \leq \sum\limits_{l=s^*}^{l_c(C^*)}\Pf^{\frac{1}{p}}\left(\jhat=l\right)\Ef^{\frac{1}{q}}\left[\left(U_{n,k_{l}^*}-\phi(f)\right)^{2q}\right]\\
	& \leq \sum\limits_{l=s^*}^{l_c(C^*)} \left(\frac{\const}{n}\right)^{\frac{1}{p}}\left[\frac{k_l^*}{n^2}+(k_l^*)^{-2\beta}\right]\leq  \log{n}\left(\frac{\const}{n}\right)^{\frac{1}{p}}
	\ee
	where the second last inequality above follows from Lemma \ref{lemma_l_less_than_lc} and conditions of the theorem, and the last inequality follows from the choice of $N\lesssim \log{n}$. Therefore for $p$ sufficiently close to $1$, we have desired control over $T_1$. 
	\subsection*{Control of $T_2$}
	\be
	  T_2&=\sum\limits_{l=l_c(C^*)+1}^{j+1}\Ef\left[\I(\jhat=l)\left(U_{n,k_{l}^*}-\phi(f)\right)^2\right]\leq \sum\limits_{l=l_c(C^*)+1}^{j+1} \Pf^{\frac{1}{p}}\left(\jhat=l\right)\Ef^{\frac{1}{q}}\left[\left(U_{n,k_{l}^*}-\phi(f)\right)^{2q}\right]\\
	&\leq C_q \sum\limits_{l=l_c(C^*)+1}^{j+1} \Pf^{\frac{1}{p}}\left(\jhat=l\right)\left\{
	\begin{array}{c}
	\Ef^{\frac{1}{q}}\left[\left(U_{n,k_{l}^*}-\Ef\left(U_{n,k_{l}^*}\right)\right)^{2q}\right]
	\\+\Ef^{\frac{1}{q}}\left[\left(\Ef\left(U_{n,k_{l}^*}\right)-\Ef\left(U_{n,k_{j}^*}\right)\right)^{2q}\right]+\Ef^{\frac{1}{q}}\left[\left(U_{n,k_{j}^*}-\phi(f)\right)^{2q}\right]
	\end{array}\right\}\\
	&\leq C_q \sum\limits_{l=l_c(C^*)+1}^{j+1} \Pf^{\frac{1}{p}}\left(\jhat=l\right)\left\{\frac{k_l^*}{n^2}+I_c^2(k_j^*,k_l^*)+\frac{k_j^*}{n^2}+(k_j^*)^{-2\beta}\right\}\\
	&\leq C_q \sum\limits_{l=l_c(C^*)+1}^{j+1} \Pf^{\frac{1}{p}}\left(\jhat=l\right)\left\{\frac{k_l^*}{n^2}+I^2(k_j^*,k_l^*)+\frac{k_j^*}{n^2}\right\}\\
	&\leq C_q C^* \log{n} R(k_j^*)\sum\limits_{l=l_c(C^*)+1}^{j+1} \Pf^{\frac{1}{p}}\left(\jhat=l\right)\leq C_q C^* \log{n}^{1+\epsilon} R(k_j^*).
	\ee
	Above, the fourth last inequality follows from condition 1 of the theorem, the third last inequality follows condition 2 of the theorem and using the fact that $\beta_{j+1}< \beta \leq \beta_j$ (which implies that $(k_j^*)^{-2\beta} \leq (k_j^*)^{-2\beta_j}\left(\frac{n^2}{\log{n}}\right)^{\frac{2\beta_j-2\beta_{j+1}}{1+4\beta_j}}\leq C\frac{k_j^*}{n^2}n^{\frac{C}{\log{n}}}\leq C'\frac{k_j^*}{n^2} $). The second last inequality follows from our choice of $l_c(C^*)$. The last inequality follows since $\sum\limits_{l=l_c(C^*)+1}^{j+1} \Pf\left(\jhat=l\right)\leq 1$, we have that for $p$ sufficiently close to $1$, one also must have that $\sum\limits_{l=l_c(C^*)+1}^{j+1} \Pf^{\frac{1}{p}}\left(\jhat=l\right)\leq \log{n}^{\epsilon}$. Finally noting that $C_q C^* \log{n} R(k_j^*)\lesssim n^{-\frac{8\beta}{1+4\beta}}\log{n}^{\frac{4\beta}{1+4\beta}}$ completes the required control $T_2$.
	
	\subsection*{Control of $T_3$}
	\be
	T_3&=\sum\limits_{l=j+2}^{N-1}\Ef\left[\I(\jhat=l)\left(U_{n,k_{l}^*}-\phi(f)\right)^2\right]\\
	& \leq \sum\limits_{l=j+2}^{N-1}\Pf^{\frac{1}{p}}\left(\jhat=l\right)\Ef^{\frac{1}{q}}\left[\left(U_{n,k_{l}^*}-\phi(f)\right)^{2q}\right]\\
	& \leq \sum\limits_{l=j+2}^{N-1} \left(\frac{\const}{n}\right)^{\frac{1}{p}}\left[\frac{k_l^*}{n^2}+(k_l^*)^{-2\beta}\right]\leq  \log{n}\left(\frac{\const}{n}\right)^{\frac{1}{p}}
	\ee
	where the second last inequality above follows from Lemma \ref{lemma_l_bigger_than_jplusone} and conditions of the theorem, and the last inequality follows from the choice of $N\lesssim \log{n}$. Therefore for $p$ sufficiently close to $1$, we have desired control over $T_1$. 
\end{proof}

\subsection*{Proof of Theorem \ref{theorem_fcube_moment}}
\begin{proof}
The proof borrows ideas from \cite{rltv2015}. However, the computations are much more cumbersome since now we control moments of a third order U-statistics.

We decompose $U_{n,1}$ as,
$$\widehat{\psi }_{k_{1} ,k_{2},k_{3}}=V_{n,1}+R_{n,1},$$
where
$$V_{n,1}=\nthree \sumijs \left\{\ktilde(X_i,X_j,X_s)\I(\xijs \in \unionm \diagthree)\right\}$$
and 
$$R_{n,1}=\widehat{\psi }_{k_{1} ,k_{2},k_{3}}-V_{n,1}.$$
The sets $\chinm, m=1,\ldots,M_n$ are constructed according to \cite{rltv2015}. Therefore, for Haar basis in one dimension, we can take $M_n=n$ and $\chinm=[\frac{m-1}{n},\frac{m}{n})$ for $m=1,\ldots,M_n$. Therefore,
\be
\Ef\left[\left(\widehat{\psi }_{k_{1} ,k_{2},k_{3}}-\Ef(\widehat{\psi }_{k_{1} ,k_{2},k_{3}})\right)^{2q}\right] & \leq C_q \left\{\Ef\left[\left(V_{n,1}-\Ef(V_{n,1})\right)^{2q}\right] +\Ef\left[\left(R_{n,1}-\Ef(R_{n,1})\right)^{2q}\right] \right\}
\ee
Below, we only control the first summand i.e. $\Ef\left[\left(V_{n,1}-\Ef(V_{n,1})\right)^{2q}\right]$ which suffices for Haar basis. However, for non-Haar bases this is not sufficient. We believe, that for non-Haar bases one can show that the second summand has sub-optimal rate. 
\paragraph*{\textbf{Control of $\mathbf{\Ef\left[\left(V_{n,1}-\Ef(V_{n,1})\right)^{2q}\right]}$}}\hspace*{\fill} \\
Write $V_{n,1}=\sum_{m=1}^{M_n}V_{n,m}$ with 
$$V_{n,m}=\nthree \sumijs \ktilde(X_i,X_j,X_s)\I(\xijs \in \diagthree).$$ 
Following \cite{rltv2015}, we define the following membership based quantities. Let, 
$$I_{n,r}:=m \ \text{if}\ X_r \in \chinm, \ r=1,\ldots,n, \ m=1,\ldots,M_n,$$
$$N_{n,m}:=\# \left(r: I_{n,r}=m\right).$$
Then, $(\Nm,1\leq m \leq M_n) \sim \mathrm{Multinomial}(p_{nm},1\leq m \leq M_n),$
where
$$\pnm = \int_{\chinm} f(x)dx.$$
Note that $\pnm \in [\frac{\fmin}{n},\frac{\fmax}{n}]$ where $\fmin=\min f(x)$.
Given the vector $\In=(I_{n,1},\ldots,I_{n,M_n})$, the observations $X_1,\ldots,X_n$ are independent with distribution of $X_r|I_{n,r}=m$ being $\frac{I_{\chinm}dF}{\pnm}$. Now,
\be
\ & \Ef\left[\left(V_{n,1}-\Ef(V_{n,1})\right)^{2q}\right]\\&\leq C_q \left[\begin{array}{c} \Ef \left[\left(V_{n,1}-\Ef(V_{n,1}|\In)\right)^{2q}\right]\\+\Ef\left[\left(\Ef(V_{n,1}|\In)-\Ef(V_{n,1})\right)^{2q}\right]\end{array}\right]
\ee

\subparagraph*{\textbf{Control of $\mathbf{\Ef\left[\left(\Ef(V_{n,1}|\In)-\Ef(V_{n,1})\right)^{2q}\right]}$}}\hspace*{\fill} \\
Now, note that, 
$$\Ef(V_{n,1}|\In)-\Ef(V_{n,1})=\sum_{m=1}^{M_n}\left(\frac{\Nm(\Nm-1)(\Nm-2)}{n(n-1)(n-2)\pnm^3}-1\right)\alpha_{n,m},$$
where $\alpha_{n,m}=\int \int\ktilde(x_1,x_2,x_3)\I((x_1,x_2,x_3)\in \diagthree)d(F(x_1))d(F(x_2))d(F(x_3))$. The next lemma provides control of the terms $\alpha_{n,m}$.
\begin{lemma}\label{lemma_control_of_alphanm}
For compactly supported wavelet bases one has 
$$\max_{m}|\alpha_{n,m}|\leq \frac{\constf}{M_n}.$$
\end{lemma}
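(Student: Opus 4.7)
The plan is to reduce $\alpha_{n,m}$ to a sum of triple integrals of the form $\int (L_1 g)(x)(L_2 g)(x)(L_3 g)(x)\,dx$, where $g(y) := f(y)\I(y\in\chinm)$ and each $L_i$ is a projection kernel or difference of projection kernels appearing in $\ktilde$, and then to bound each factor via uniform $L^\infty$ and $L^1$ estimates tailored to the Haar structure.

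By Fubini, each of the five constituent summands in the expansion of $\ktilde$ rewrites, after absorbing the $x_i$-integrations against $dF$ restricted to $\chinm$, as $\int (L_1 g)(x)(L_2 g)(x)(L_3 g)(x)\,dx$ with each $L_i \in \{K_{k_1},\ K_{k_2}-K_{k_1},\ K_{k_3}-K_{k_1},\ K_{k_3}-K_{k_2}\}$. The two key estimates are (a) $\|K_k g\|_\infty \leq \fmax$ and (b) $\|K_k g\|_1 \leq \fmax/M_n$, both uniform in $k$. For (a), the Haar projection satisfies $K_k g(x) = k\int_{B^{(k)}(x)\cap \chinm} f(y)\,dy$, where $B^{(k)}(x)$ is the dyadic cell of resolution $k$ containing $x$; if $k\geq M_n$ the cell lies inside $\chinm$ and the integral is at most $\fmax/k$, while if $k<M_n$ the cell contains $\chinm$ and the integral is at most $\fmax/M_n$, so in both regimes the product is at most $\fmax$. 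For (b), positivity of $K_k$ together with $\int K_k(x,y)\,dx=1$ yields $\int |K_k g|\,dx=\int K_k g\,dx=\int g\,dx \leq \fmax/M_n$. Differences $K_{k_j}-K_{k_i}$ inherit both bounds up to a factor of $2$ by the triangle inequality.

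H\"{o}lder's inequality then gives, for each summand,
\[
\left|\int (L_1 g)(L_2 g)(L_3 g)\,dx\right| \leq \|L_1 g\|_\infty \|L_2 g\|_\infty \|L_3 g\|_1 \leq \frac{\constf}{M_n},
\]
and since $\ktilde$ expands into a finite, $\beta$-independent number of such summands, summation delivers the asserted bound uniformly in $m$.

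The main obstacle, and the reason the argument is confined to Haar, is estimate (a). For Haar, $K_k$ is a conditional-expectation operator, hence a positive contraction, making the pointwise bound $\|K_k g\|_\infty \leq \fmax$ essentially immediate. For a general compactly supported wavelet, $K_k$ is not positive, and some cancellation between mother-wavelet contributions would need to be quantified---or a different H\"{o}lder split with weaker pointwise control would be required, which is precisely the step that prevents an immediate extension of Theorem \ref{theorem_fcube_moment} beyond Haar.
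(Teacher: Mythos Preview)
Your argument is correct and takes a genuinely different route from the paper. The paper expands the triple kernel into wavelet sums, bounds each summand by $\constf/k_3$, and then counts that at most $\const\,k_3/M_n$ summands are nonzero (a support-overlap counting argument). You instead perform the $x_i$-integrations first via Fubini, reducing everything to $\int (L_1 g)(L_2 g)(L_3 g)\,dx$, and then apply H\"older together with the operator bounds $\|K_k\|_{L^\infty\to L^\infty}$ and $\|K_k\|_{L^1\to L^1}$. This is cleaner and avoids the combinatorics entirely; the paper's proof, on the other hand, makes explicit which wavelet indices interact, information that is useful elsewhere in the moment computations of Theorem~\ref{theorem_fcube_moment}.

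One correction to your closing paragraph: estimate (a) is \emph{not} what confines the argument to Haar. The bound $\|K_k g\|_\infty \leq C_{\psi_0}\|g\|_\infty$ holds for any compactly supported wavelet basis---it is precisely Lemma~\ref{lemma_wavelet_properties}(1) applied to $g$ in place of $f$ (the proof there uses only boundedness and compact support, not smoothness of $g$). Likewise (b) follows from $\int |K_k(x,y)|\,dx \leq C_{\psi_0}$ uniformly in $y$, which gives $\|K_k\|_{L^1\to L^1}\leq C_{\psi_0}$ for general compactly supported wavelets. So your argument in fact proves the lemma in the generality in which it is stated, matching the paper's own proof. The Haar restriction in Theorem~\ref{theorem_fcube_moment} enters at a different point in that proof, not here.
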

Therefore, we have $\sum_{m=1}^{M_n}|\alpha_{n,m}|\leq \const$. Now, since $\Nm$'s are negatively associated (see definition in \cite{joag1983negative}), we have by Theorem 2 of \cite{shao2000comparison} followed by Marcinkiewicz – Zygmund inequality that,
\be
\ & \Ef\left[\left(\Ef(V_{n,1}|\In)-\Ef(V_{n,1})\right)^{2q}\right] \\& =\Ef\left[\left(\frac{1}{M_n}\sum_{m=1}^{M_n}\left\{\frac{\Nm(\Nm-1)(\Nm-2)}{n(n-1)(n-2)\pnm^3}-1\right\}\alpha_{n,m}\right)^{2q}\right]M_n^{2q}\\
&\leq C_q M_n^{2q} M_n^{-\frac{2q}{2}}\frac{1}{M_n}\sum_{m=1}^{M_n}\Ef\left[\left(\frac{N_m(N_m-1)(\Nm-2)}{n(n-1)(n-2)\pnm^3}-1\right)^{2q}\right]\alpha_{n,m}^{2q}\\
&\leq C_q^{*} M_n^{q-1}\sum_{m=1}^{M_n}|\alpha_{n,m}|^{2q}  \label{eqn:multinomial_moment_3rdorder}\\
&\leq C_q^{*} M_n^{q-1}\{\max(\alpha_{n,m})\}^{2q-1}\sum_{m=1}^{M_n}|\alpha_{n,m}|\leq \const^{q} M_n^{q-1}\{\max(\alpha_{n,m})\}^{2q-1}
\ee
In the above display, equation \ref{eqn:multinomial_moment_3rdorder} follows from Lemma 5.6 of \cite{rltv2015}. Now, by Lemma \ref{lemma_control_of_alphanm},  $\{\max(\alpha_{n,m})\}^{2q-1}\leq (\constf)^{2q-1}\left(\frac{1}{M_n}\right)^{2q-1}$. Therefore, $\Ef\left[\left(\Ef(V_{n,1}|\In)-\Ef(V_{n,1})\right)^{2q}\right]\leq (\const\fmax)^{2q-1}\left(\frac{1}{M_n}\right)^{q}\leq (\const\fmax)^{2q-1}\left(\frac{k_3}{n^2}\right)^{q} $ for our choice of $M_n\asymp n$.

\subparagraph*{\textbf{Control of $\mathbf{\Ef\left[\left(V_{n,1}-\Ef(V_{n,1}|\In)\right)^{2q}\right]}$}}\hspace*{\fill} \\
Suppose, $V_{n,m}=\vmone+\vmtwo+\vmthree\vmcond$ denote the Hoeffding decomposition of $V_{n,m}$ w.r.t the conditional distribution given $\In$. Therefore,
\be 
\Ef\left[\left(V_{n,1}-\Ef(V_{n,1}|\In)\right)^{2q}\right]&=
\Ef\left[\left(\summ \left\{\vmone+\vmtwo+\vmthree \right\}\right)^{2q}\right]\\
\ee
Note that $\Ef\left(\vmone|\In\right)=0$, $\Ef\left(\vmtwo|\In\right)=0$, $\Ef\left(\vmthree|\In\right)=0$ and they are independent over $m$ conditional on $\In$. Hence by a conditional version of Rosenthal's Inequality (noting that the constants of the inequality does not depend on the underlying distribution), we have 
\be
\ & \Ef\left[\left(\summ \left\{\vmone+\vmtwo+\vmthree \right\}\right)^{2q}\right] \\&\leq C_q \Ef\left[\summ \Ef\left(|\vmone+\vmtwo+\vmthree|^{2q}|\In\right)+\left\{\summ \Ef\left(|\vmone+\vmtwo+\vmthree|^{2} \In \right)\right\}^{\frac{2q}{2}} \right] \\
&= C_q \left[\summ \Ef\left(|\vmone+\vmtwo+\vmthree|^{2q}\right)+\Ef\left\{\summ \Ef\left(|\vmone+\vmtwo+\vmthree|^{2}| \In \right)\right\}^{\frac{2q}{2}} \right] \\
\label{eqn:3rorder_qth_moment_main}
\ee
Below we control $$\summ \Ef\left(|\vmone+\vmtwo+\vmthree|^{2q}\right) \ \text{and} \ \Ef\left\{\summ \Ef\left(|\vmone+\vmtwo+\vmthree|^{2}| \In \right)\right\}^{\frac{2q}{2}}$$ separately, at the required rate.

\paragraph*{\textbf{Control of $\mathbf{\summ \Ef\left(|\vmone+\vmtwo+\vmthree|^{2q}\right)}$}}\hspace*{\fill} \\
We now provide a general control over terms like $\Ef\left(|\vmone+\vmtwo+\vmthree|^{2q}\right)$. To this end, note that, it is enough to control $\Ef\left((\vmone)^{2q}\right)$, $\Ef\left((\vmtwo)^{2q}\right)$, and $\Ef\left((\vmthree)^{2q}\right)$ separately. We have dropped the absolute value sign assuming without loss of generality that $q$ is a sufficiently large integer.
\subparagraph*{\textbf{Control of $\mathbf{\Ef\left((\vmone)^{2q}\right)}$}}\hspace*{\fill} \\

Note that, by Hoeffding decomposition of U-statistics with asymmetric kernel, we have
\be
\vmone&=\frac{\Nm(\Nm-1)(\Nm-2)}{n(n-1)(n-2)}\times\\
 & \frac{1}{\Nm}\sum_{i=1}^{\Nm}\left\{
\begin{array}{c}
	\Ejs \left[\ktilde\left(X_i,X_j,X_s\right)\I(\xijs \in \diagthree)|\In\right]\\
	+\Ejs \left[\ktilde\left(X_i,X_s,X_j\right)\I(\xijs \in \diagthree)|\In\right]\\
	+\Ejs \left[\ktilde\left(X_j,X_i,X_s\right)\I(\xijs \in \diagthree)|\In\right]\\
	+\Ejs \left[\ktilde\left(X_s,X_i,X_j\right)\I(\xijs \in \diagthree)|\In\right]\\
	+\Ejs \left[\ktilde\left(X_j,X_s,X_i\right)\I(\xijs \in \diagthree)|\In\right]\\
	+\Ejs \left[\ktilde\left(X_s,X_j,X_i\right)\I(\xijs \in \diagthree)|\In\right]\\
	-6\Ef \left[\ktilde\left(X_i,X_j,X_s\right)\I(\xijs \in \diagthree)|\In\right]
\end{array}
	\right\}
\ee
By an application of conditional version of Marcinkiewicz – Zygmund Inequality (noting that the constants of the inequality does not depend on the underlying distribution) we have that
\be
\ & \Ef\left((\vmone)^{2q}|\In\right)\\ &\leq C_q \frac{\Nm^{2q}(\Nm-1)^{2q}(\Nm-2)^{2q}}{n^{2q}(n-1)^{2q}(n-2)^{2q}}\frac{\Nm^{-\frac{2q}{2}}}{\Nm}\times \\
&\sum\limits_{i=1}^{\Nm}\Ef\left(\left\{
\begin{array}{c}
	\Ejs \left[\ktilde\left(X_i,X_j,X_s\right)\I(\xijs \in \diagthree)|\In\right]\\
	+\Ejs \left[\ktilde\left(X_i,X_s,X_j\right)\I(\xijs \in \diagthree)|\In\right]\\
	+\Ejs \left[\ktilde\left(X_j,X_i,X_s\right)\I(\xijs \in \diagthree)|\In\right]\\
	+\Ejs \left[\ktilde\left(X_s,X_i,X_j\right)\I(\xijs \in \diagthree)|\In\right]\\
	+\Ejs \left[\ktilde\left(X_j,X_s,X_i\right)\I(\xijs \in \diagthree)|\In\right]\\
	+\Ejs \left[\ktilde\left(X_s,X_j,X_i\right)\I(\xijs \in \diagthree)|\In\right]\\
	-6\Ef \left[\ktilde\left(X_i,X_j,X_s\right)\I(\xijs \in \diagthree)|\In\right]
\end{array}
\right\}^{2q}\mid\In\right)\\
& \leq  C_q \frac{\Nm^{2q}(\Nm-1)^{2q}(\Nm-2)^{2q}}{n^{2q}(n-1)^{2q}(n-2)^{2q}}\frac{\Nm^{-\frac{2q}{2}}}{\Nm}\times \\
&\sum\limits_{i=1}^{\Nm}\left(
\begin{array}{c}
	\Ef\left(\left\{\Ejs \left[\ktilde\left(X_i,X_j,X_s\right)\I(\xijs \in \diagthree)|\In\right]\right\}^{2q}|\In\right)\\
	+\Ef\left(\left\{\Ejs \left[\ktilde\left(X_i,X_s,X_j\right)\I(\xijs \in \diagthree)|\In\right]\right\}^{2q}|\In\right)\\
	+\Ef\left(\left\{\Ejs \left[\ktilde\left(X_j,X_i,X_s\right)\I(\xijs \in \diagthree)|\In\right]\right\}^{2q}|\In\right)\\
	+\Ef\left(\left\{\Ejs \left[\ktilde\left(X_s,X_i,X_j\right)\I(\xijs \in \diagthree)|\In\right]\right\}^{2q}|\In\right)\\
	+\Ef\left(\left\{\Ejs \left[\ktilde\left(X_j,X_s,X_i\right)\I(\xijs \in \diagthree)|\In\right]\right\}^{2q}|\In\right)\\
	+\Ef\left(\left\{\Ejs \left[\ktilde\left(X_s,X_j,X_i\right)\I(\xijs \in \diagthree)|\In\right]\right\}^{2q}|\In\right)\\
\end{array}
\right)\\
\label{eqn:vmone_threedim_hoeff_first_moment}
\ee
Above, the last inequality follows from conditional Jensen's Inequality. A typical term in the above summand looks like 
\be
 \frac{1}{\pnm^{4q+1}}\int_{\chinm}\left[\int_{\chinm}\int_{\chinm} \int \sum_{l_1}\sum_{l_2}\sum_{l_3}\left(\begin{array}{c} \psi_{l_1}^{k_1}(x)\psi_{l_1}^{k_1}(x_1)\psi_{l_2}^{k_2}(x)\psi_{l_2}^{k_2}(x_2) \\
 	\psi_{l_3}^{k_3}(x)\psi_{l_3}^{k_3}(x_3)f(x_2)f(x_3)\end{array}\right)dx  dx_2 dx_3\right]^{2q}f(x_1)dx_1 \\
 \label{eqn:vmone_threedim_hoeff_first_moment_typical}
\ee
for some tuple $k_1,k_2,k_3$. To evaluate each of the above, we consider three different ordering of $k_1,k_2,k_3$. Let us consider the the integrals in the square bracket first. Since here $x_1$ is fixed, we look for the subinterval of resolution $k_1$ where $x_1$ lies. In each of the cases we look at the specific subinterval containing $x_1$ for every fixed $x_1$. Further this subinterval needs to intersect $\chinm$. Say, we index that by $k_1$. Call this subinterval $S_{k_1}(x_1)$. Taking the integral inside the sum over location parameters, we have that each summand is bounded by $\constf \frac{k_1 k_2 k_3}{(\max(k_1,k_2,k_3))^3}$. For each fixed $x_1$, the number of summands contributing will be bounded by $\const\frac{\max(k_1,k_2,k_3)}{k_1}$. Therefore, whichever interval $x$ must lie it should intersect that containing $x_i$. Therefore, the term in the square bracket is bounded by $\const$ for each fixed $x_1$. Raising to the power of $2q$ and integrating with respect to $f$ over $\chinm$ yields the outer integral to be bounded by $\constf\pnm$. Therefore, \eqref{eqn:vmone_threedim_hoeff_first_moment_typical} is always bounded by $\frac{\constf}{\pnm^{4q}}$. Therefore, by \eqref{eqn:vmone_threedim_hoeff_first_moment} we have that almost surely,
\be
\Ef\left((\vmone)^{2q}|\In\right)& \leq C_q\constf \frac{\Nm^{2q}(\Nm-1)^{2q}(\Nm-2)^{2q}}{n^{2q}(n-1)^{2q}(n-2)^{2q}}\frac{\Nm^{-\frac{2q}{2}}}{\Nm}\times \frac{\Nm}{\pnm^{2q}}\\ \label{eqn:vmone_third_order_pre_final}
\ee
Now, noting that $\pnm \geq \frac{\fmin}{n}$ and using Lemma 5.6 of \cite{rltv2015}, we have that 
\be
\Ef\Ef\left((\vmone)^{2q}|\In\right)& \leq \frac{C_q(\psimax,\fmax,\fmin)}{n^{2q}}, \label{eqn:vmone_third_order_final_bound}
\ee
which is at the desired rate of control.
\subparagraph*{\textbf{Control of $\mathbf{\Ef\left((\vmtwo)^{2q}\right)}$}}\hspace*{\fill} \\
For the second term of the U-statistics it is enough to control for the expectation of 
\be
I^{(m)}_{21} &= \frac{\left(\Nm(\Nm-1)(\Nm-1)\right)^{2q}}{n^{2q}(n-1)^{2q}(n-2)^{2q}}\\
&\times \left[\frac{1}{\Nm}\left(\int_{\chinm}\int_{\chinm}(\Ei(\ktilde\xijs\I(\xijs \in \diagthree)|\In))^2\frac{f(x_j)f(x_s)}{\pnm^2}dx_j dx_s\right)^{\frac{2q}{2}}\right]
\ee
and 

\be
I^{(m)}_{22} &= \frac{\left(\Nm(\Nm-1)(\Nm-1)\right)^{2q}}{n^{2q}(n-1)^{2q}(n-2)^{2q}}\\
&\times \left[\frac{1}{\Nm}\left(\int_{\chinm}\int_{\chinm}(\Ei(\ktilde\xijs\I(\xijs \in \diagthree)|\In))^{2q}\frac{f(x_j)f(x_s)}{\pnm^2}dx_j dx_s\right)\right]
\ee

\subparagraph*{\textbf{Control of $I^{(m)}_{21}$}}\hspace*{\fill} \\
First note that one can argue by simply looking at orders of truncation that, 
\be
\ & \Ei(\ktilde\xijs\I(\xijs \in \diagthree)|\In)\\
&\asymp\frac{1}{\pnm}\left[\begin{array}{c}
  \sum_{l_1}\sum_{l_2}\sum_{l_3}\int_{\chinm} \int \psi_{l_1}^{k_1}(x)\psi_{l_1}^{k_1}(x_i)\psi_{l_2}^{k_3}(x)\psi_{l_2}^{k_3}(x_j)\psi_{l_3}^{k_3}(x)\psi_{l_3}^{k_3}(x_s)f(x_1)dx dx_1 \\
+  \sum_{l_1}\sum_{l_2}\sum_{l_3}\int_{\chinm} \int \psi_{l_1}^{k_3}(x)\psi_{l_1}^{k_3}(x_i)\psi_{l_2}^{k_1}(x)\psi_{l_2}^{k_1}(x_j)\psi_{l_3}^{k_3}(x)\psi_{l_3}^{k_3}(x_s)f(x_1)dx dx_1
\end{array}\right]
\ee
Let us control the first term in the square bracket first. For each fixed value of $(x_j,x_s)$, find the boxes of length $\frac{1}{k_3}$ which contains $x_j$ and $x_s$. If these boxes are disjoint then they contribute to the summand of the first term. Therefore, only way of getting a contribution in the first term is when $x_j$ and $x_s$ are in the same box of resolution $k_3$. Therefore, $x$ must also belong to the same box of resolution $k_3$ to contribute to the summand. Therefore, the first term inside the square bracket is always bounded by $ \constf\frac{k_1k_3^2}{k_3 M_n}$ for every choice $(x_j,x_s)$ lying in the same $k_3$ resolution box and $0$ otherwise. Therefore, the integral of the square of the first term over $(x_j,x_s)\in \diag$ is bounded by $\constf \left(\frac{k_1k_3^2}{k_3 M_n}\right)^2 \frac{1}{k_3^2}\frac{k_3}{M_n}=\constf \frac{k_1^2 k_3}{M_n^3}$. Now, let us look at the second term of the square bracket. Once again fix a  $(x_j,x_s)\in \diag$ and find the $k_3$ resolution box that $x_s$ belongs to. Now note that $x_i,x$ must belong to the same $k_3$ resolution box and hence the number of summands contributing to the sum in second term of the square bracket is again bounded by $\const$. Therefore, the second term inside the square bracket is always bounded by $ \constf\frac{k_1k_3^2}{k_3^2}$ for every choice $(x_j,x_s)$ with $x_s$ lying in the some $k_3$ resolution box and $0$ otherwise. Therefore, the integral of the square of the second term over $(x_j,x_s)\in \diag$ is bounded by $\constf \left(\frac{k_1k_3^2}{k_3^2}\right)^2 \frac{k_3}{M_n}\frac{1}{k_3}\frac{1}{M_n}=\constf \frac{k_1^2 }{M_n^2}$. Since, by our choice, $k_3 \gg M_n$, the first term of the square bracket dominates after squaring and integrating over over $(x_j,x_s)\in \diag$. Taking into account the division by $\pnm$'s, the final contribution to $I_{21}^{(m)}$ is bounded $\left(\constf \frac{k_1^2 k_3}{M_n^3}\right)^q\frac{1}{\pnm^{4q}}$. Therefore, 
\be
I_{21}^{(m)} & \leq C(\psimax,\fmax,\fmin)\left(\Nm(\Nm-1)(\Nm-1)\right)^{2q}\left(\frac{k_1^2 k_3}{M_n^3}\right)^q\frac{1}{n^{2q}}\\
&\leq C(\psimax,\fmax,\fmin)\left(\Nm(\Nm-1)(\Nm-1)\right)^{2q} \left(\frac{k_3}{n^3}\right)^{q}.\\
\label{eqn:Imtwoone_final}
\ee
Taking expectations by using Lemma 5.6 of \cite{rltv2015} yields the desired control.
\subparagraph*{\textbf{Control of $I^{(m)}_{22}$}}\hspace*{\fill} \\
The calculation technique of this term is similar to that of  $I_{21}^{(m)}$. The only difference is that instead of taking square of the square bracket term we take the $2q^{\mathrm{th}}$ power. Hence, the integral over $(x_j,x_s)\in \diag$ is bounded by $$\constf \left[\left(\frac{k_1k_3^2}{k_3 M_n}\right)^{2q} \frac{1}{k_3^2}\frac{k_3}{M_n}+\left(\frac{k_1k_3^2}{k_3^2}\right)^{2q} \frac{k_3}{M_n}\frac{1}{k_3}\frac{1}{M_n}\right].$$
Taking into account the division by $\pnm$'s, the final contribution to $I_{22}^{(m)}$ is bounded $$\constf \left[\left(\frac{k_1k_3^2}{k_3 M_n}\right)^{2q} \frac{1}{k_3^2}\frac{k_3}{M_n}+\left(\frac{k_1k_3^2}{k_3^2}\right)^{2q} \frac{k_3}{M_n}\frac{1}{k_3}\frac{1}{M_n}\right]\frac{1}{\pnm^{2q+2}}.$$ 
Therefore, 
\be
I_{21}^{(m)} & \leq C(\psimax,\fmax,\fmin)\left(\Nm(\Nm-1)(\Nm-2)\right)^{2q}\left[\left(\frac{k_1k_3^2}{k_3 M_n}\right)^{2q} \frac{1}{k_3^2}\frac{k_3}{M_n}+\left(\frac{k_1k_3^2}{k_3^2}\right)^{2q} \frac{k_3}{M_n}\frac{1}{k_3}\frac{1}{M_n}\right]\frac{1}{n^{4q-2}}\\
&\leq C(\psimax,\fmax,\fmin)\left(\Nm(\Nm-1)(\Nm-2)\right)^{2q}\frac{1}{M_n} \left(\frac{k_3}{n^2}\right)^{2q-1}.\\
\label{eqn:Imtwotwo_final}
\ee
Taking expectations by using Lemma 5.6 of \cite{rltv2015} yields the desired control.

\subparagraph*{\textbf{Combining Controls of $I^{(m)}_{21}$ and $I^{(m)}_{22}$ for control of $\mathbf{\Ef\left((\vmtwo)^{2q}\right)}$}}\hspace*{\fill} \\
We get 
\be
\mathbf{\Ef\left((\vmtwo)^{2q}\right)}& \leq C(\psimax,\fmax,\fmin)\left[\frac{1}{M_n} \left(\frac{k_3}{n^2}\right)^{2q-1}+\left(\frac{k_3}{n^3}\right)^{q}\right]. \label{eqn:vmtwo_third_order_final_bound}
\ee
\subparagraph*{\textbf{Control of $\mathbf{\Ef\left((\vmthree)^{2q}\right)}$}}\hspace*{\fill} \\
For this part we will need a moment bound for third order U-statistics. We do that in the following Lemma.

\begin{lemma}\label{lemma_moment_third_order_U_stat}
	For any $q>2,$ there exists a constant C$_{q}$ such that for any i.i.d
	random variables $X_{1},X_{2},...,X_{m}$ and degenerate symmetric kernel $K.$%
	\begin{eqnarray*}
		&&E\left\vert \frac{1}{m\left( m-1\right) \left( m-2\right) }%
		\sum\limits_{i_{1}\neq i_{2}\neq i_{3}}K\left(
		X_{i_{1}},X_{i_{2}},X_{i_{3}}\right) \right\vert ^{q} \\
		&\leq &C_{q}m^{-q}E\left\vert K\left( X_{i_{1}},X_{i_{2}},X_{i_{3}}\right)
		\right\vert ^{q}
	\end{eqnarray*}
\end{lemma}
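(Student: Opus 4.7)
The plan is to combine decoupling for U-statistics with an iterated application of Rosenthal's inequality, using the degeneracy of $K$ in each coordinate to preserve the mean-zero property at each peeling step. First, by the symmetrized decoupling inequality for U-statistics of de la Pena and Gine, there is a constant $D_q$ depending only on $q$ such that
$$E\left|\sum_{i_1\neq i_2\neq i_3} K(X_{i_1},X_{i_2},X_{i_3})\right|^{q} \leq D_q\, E\left|\sum_{i_1,i_2,i_3} K(X^{(1)}_{i_1},X^{(2)}_{i_2},X^{(3)}_{i_3})\right|^{q},$$
where $(X^{(j)}_i)_i$ for $j=1,2,3$ are three independent copies of the original sample. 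Denote the decoupled triple sum on the right by $S$. The goal then is to show $E|S|^q \leq C_q\, m^{3q/2}\, E|K|^q$, since dividing by $(m(m-1)(m-2))^q \asymp m^{3q}$ yields $E|U|^q \leq C_q\, m^{-3q/2}\,E|K|^q \leq C_q\, m^{-q}\, E|K|^q$, the last inequality using $q>2$.

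To bound $E|S|^q$ I would peel off the three coordinates one at a time. Condition on $(X^{(2)},X^{(3)})$ and write $S=\sum_{i_1}Z_{i_1}$ with $Z_{i_1}=\sum_{i_2,i_3}K(X^{(1)}_{i_1},X^{(2)}_{i_2},X^{(3)}_{i_3})$. Conditionally the $Z_{i_1}$ are i.i.d.\ as functions of the independent variables $X^{(1)}_{i_1}$, and degeneracy of $K$ in its first argument forces $E[Z_{i_1}\mid X^{(2)},X^{(3)}]=0$. Rosenthal's inequality then gives
$$E\bigl[|S|^q\,\big|\,X^{(2)},X^{(3)}\bigr]\leq R_q\left(m^{q/2}\bigl(E[Z_1^{2}\mid X^{(2)},X^{(3)}]\bigr)^{q/2}+m\,E\bigl[|Z_1|^q\,\big|\,X^{(2)},X^{(3)}\bigr]\right).$$
Repeating the argument on $Z_1$ by conditioning on $(X^{(1)}_1,X^{(3)})$ and summing over $i_2$ (using degeneracy in the second coordinate), and then once more on the innermost sum over $i_3$ by conditioning on $(X^{(1)}_1,X^{(2)}_1)$ (using degeneracy in the third coordinate), and finally taking unconditional expectations while using Jensen's inequality to dominate each conditional $L_2$ piece by the corresponding conditional $L_q$ piece, one obtains
$$E|S|^{q}\leq C'_q\, m^{3q/2}\, E|K(X_1,X_2,X_3)|^{q},$$
which is exactly what was required.

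The main bookkeeping obstacle is that each Rosenthal step generates two terms, so across three iterations one gets a sum of $2^3=8$ products of $m^{q/2}$ and $m$ factors. The dominant term is $m^{q/2}\cdot m^{q/2}\cdot m^{q/2}=m^{3q/2}$, and every other combination contributes a strictly smaller power of $m$ for $q\geq 2$, so all are absorbed into $C_q$. A minor additional subtlety is the restriction to pairwise-distinct indices in the original sum, which is handled by exchangeability combined with inclusion–exclusion, the repeated-index contributions being of strictly smaller order. The key conceptual point, and the step that must be verified at each iteration, is that degeneracy of $K$ in each individual coordinate is preserved after summing over the other coordinates, so that the conditional mean-zero hypothesis of Rosenthal's inequality continues to hold; this is where complete (rather than merely first-order) degeneracy of $K$ is essential.
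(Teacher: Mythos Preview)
Your proof is correct but follows a different route from the paper. The paper invokes Proposition~2.4 of Gin\'e, Lata{\l}a and Zinn (2000) as a black box, obtaining a four-term bound involving conditional second moments and maxima of the kernel, and then collapses each term to $E|K|^q$ via the crude estimate $E[\max_{i} Z_i]\le m\,E[Z_1]$ together with Jensen's inequality (this Jensen step is where the paper also uses $q/2\ge 1$). Your argument instead starts from decoupling and then peels the three coordinates one at a time via Rosenthal's inequality, applying Jensen conditionally at each stage to absorb the $L_2$ piece into the $L_q$ piece; this yields $E|S|^q\le (2R_q)^3 m^{3q/2}E|K|^q$ directly.

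Two small remarks. First, the de la Pe\~na--Gin\'e decoupling inequality preserves the index set, so the right-hand side of your displayed decoupling should a priori still run over pairwise-distinct indices; you acknowledge this and your inclusion--exclusion remark is the right fix, since for the decoupled samples the diagonal contributions (e.g.\ $i_1=i_2$) are lower-order degenerate sums whose $q$-th moment is $O(m^q E|K|^q)=o(m^{3q/2}E|K|^q)$ by the same iterated-Rosenthal logic with one fewer free index. Second, your approach is in spirit a direct re-derivation of the relevant case of the Gin\'e--Lata{\l}a--Zinn bound rather than a genuinely new mechanism, which is why both proofs land on the same dominant exponent $m^{3q/2}$; the trade-off is that the paper's version is a two-line citation plus a cleanup, while yours is self-contained and makes transparent exactly where complete degeneracy is used at each peel.
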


 We will now apply Lemma \ref{lemma_moment_third_order_U_stat} to control moments of the degenerate part of the kernel $K(X_i,X_j,X_s)=\int K_{k_1}(x,X_i)K_{k_3}(x,X_j)K_{k_3}(x,X_s)I(\xijs \in \diagthree)dx$. Standard arguments show that this is the term with the highest contribution among all the different terms of $\vmthree$. Denoting the degenerate part of this kernel by $K_d^{(m)}$ we can show by Lemma \ref{lemma_moment_third_order_U_stat} and a standard contraction of norm under conditional expectation argument that 
 \be
 \ & \Ef\left((\vmthree)^{2q}\mid \In\right) \\
 & \leq \frac{\constf^q\left(\Nm(\Nm-1)(\Nm-2)\right)^{2q}}{n^{2q}(n-1)^{2q}(n-2)^{2q}}\\
 & \times \frac{E\left\vert K_d^{(m)}(X_1,X_2,X_3)\right\vert ^{2q}}{\Nm^{2q}}\\
 & \leq \frac{\constf^q\left(\Nm(\Nm-1)(\Nm-2)\right)^{2q}}{n^{2q}(n-1)^{2q}(n-2)^{2q}}\\
 & \times \frac{E\left\vert \int K_{k_1}(x,X_1)K_{k_3}(x,X_2)K_{k_3}(x,X_3)I(\xijs \in \diagthree)dx
 	\right\vert ^{2q}}{\Nm^{2q}}\\
 &=\frac{\constf^q\left((\Nm-1)(\Nm-2)\right)^{2q}}{n^{2q}(n-1)^{2q}(n-2)^{2q}}\\
 & \times E\left\vert \int K_{k_1}(x,X_1)K_{k_3}(x,X_2)K_{k_3}(x,X_3)I(\xijs \in \diagthree)dx
 	\right\vert ^{2q} \\ \label{eqn:vmthree_main}
\ee
Therefore it is enough to control $$E\left\vert \int K_{k_1}(x,X_1)K_{k_3}(x,X_2)K_{k_3}(x,X_3)I(\xijs \in \diagthree)dx
\right\vert ^{2q}$$ at a level so that the expectation of the right hand side of \eqref{eqn:vmthree_main} is controlled at the desired level of $\left(\frac{k_3}{n^2}\right)^q$. We do this below.

\paragraph*{\textbf{Control of $\mathbf{E\left\vert \int K_{k_1}(x,X_1)K_{k_3}(x,X_2)K_{k_3}(x,X_3)I(\xijs \in \diagthree)dx
			\right\vert ^{2q}}$}}\hspace*{\fill} \\

\be
\ & {E\left\vert \int K_{k_1}(x,X_1)K_{k_3}(x,X_2)K_{k_3}(x,X_3)I(\xijs \in \diagthree)dx
	\right\vert ^{2q}}\\
&\leq  \int_{\chinm}\int_{\chinm}\int_{\chinm}\left[\sum_{l_1}\sum_{l_2}\sum_{l_3}\int
	\left\vert \begin{array}{c}
	\psi_{l_1}^{k_1}(x)\psi_{l_1}^{k_1}(x_1)\psi_{l_2}^{k_3}(x)\\
	\psi_{l_2}^{k_3}(x_2)\psi_{l_3}^{k_3}(x)\psi_{l_3}^{k_3}(x_3)
	\end{array}\right\vert dx	
\right]^{2q}\frac{f(x_1)f(x_2)f(x_3)}{\pnm^3} dx_1dx_2 dx_3
\ee
Now, for each fixed value of $(x_1,x_2,x_3)$, find the boxes of length $\frac{1}{k_3}$ which contains $x_2$ and $x_3$. Therefore, only way of getting a contribution is when $x_2$ and $x_3$ are in the same box of resolution $k_3$. Therefore, $x$ must also belong to the same box of resolution $k_3$ to contribute to the summand. Therefore, the term inside the square bracket is always bounded by $ \left(\const\frac{k_1k_3^2}{k_3}\right)^{2q}$ for every choice $(x_2,x_3)$ lying in the same $k_3$ resolution box and $0$ otherwise. Therefore, the final integral with respect to $(x_1,x_2,x_3)$ is bounded by $\constf^q\frac{\left(\frac{k_1k_3^2}{k_3}\right)^{2q}}{\pnm^3}\frac{1}{M_n}\frac{1}{k_3^2}\frac{k_3}{n}=C_q(\psimax,\fmax,\fmin){k_3^{2q-1}n^{2q+1}}$.

\paragraph*{\textbf{Combining with \eqref{eqn:vmthree_main}}}
Therefore, from \eqref{eqn:vmthree_main}, we have that almost surely, 
\be
\ & \Ef\left((\vmthree)^{2q}\mid \In\right) \\
& \leq \frac{\constf^q\left((\Nm-1)(\Nm-2)\right)^{2q}}{n^{2q}(n-1)^{2q}(n-2)^{2q}}\\
& \times E\left\vert \int K_{k_1}(x,X_1)K_{k_3}(x,X_2)K_{k_3}(x,X_3)I(\xijs \in \diagthree)dx
\right\vert ^{2q} \\
& \leq C_q(\psimax,\fmax,\fmin) \frac{\left((\Nm-1)(\Nm-2)\right)^{2q}}{n^{2q}(n-1)^{2q}(n-2)^{2q}}{k_3^{2q-1}n^{2q+1}}\\
&\leq C_q(\psimax,\fmax,\fmin)\left((\Nm-1)(\Nm-2)\right)^{2q} \frac{k_3^{2q-1}}{n^{4q-1}}\\
\label{eqn:vmthree_third_order_pre_final_bound}
\ee
Therefore, by Lemma 5.6 of \cite{rltv2015} we have that,
\be
\Ef\left((\vmthree)^{2q}\right) & \leq C_q(\psimax,\fmax,\fmin) \frac{k_3^{2q-1}}{n^{4q-1}}=C_q(\psimax,\fmax,\fmin)\left(\frac{k_3}{n^2}\right)^{2q-1}\frac{1}{n} \\ \label{eqn:vmthree_third_order_final_bound}
\ee

Combining \eqref{eqn:vmone_third_order_final_bound}, \eqref{eqn:vmtwo_third_order_final_bound} and \eqref{eqn:vmthree_third_order_final_bound} and summing over $M_n=O(n)$ partitions we get the desired bound that
\be
\summ \Ef\left(|\vmone+\vmtwo+\vmthree|^{2q}\right) & \leq  C_q(\psimax,\fmax,\fmin) \left(\frac{k_3}{n^2}\right)^{2q-1}.
\ee

\paragraph*{\textbf{Control of $\mathbf{\Ef\left\{\summ \Ef\left(|\vmone+\vmtwo+\vmthree|^{2} |\In \right)\right\}^{\frac{2q}{2}}}$}}\hspace*{\fill} \\
Unlike the previous case, here we will will employ a conditional version of Gine-Latala inequality for moments of U-statistics along with the observation that the constants of the inequality does not depend on the underlying distribution. The only difference will be that we will have to do an extra step of taking the expectation of the moment of sum of functions of multinomial coordinate statistics. To deal with this we again use the Marcinkiewicz – Zygmund Inequality following an use of Theorem 2 of \cite{shao2000comparison}. First, note that it is enough to control $\Ef\left\{\summ \Ef\left((\vmone)^{2} |\In \right)\right\}^{\frac{2q}{2}}$,\\ $\Ef\left\{\summ \Ef\left((\vmtwo)^{2} |\In \right)\right\}^{\frac{2q}{2}}$, $\Ef\left\{\summ \Ef\left((\vmthree)^{2} |\In \right)\right\}^{\frac{2q}{2}}$ separately at the desired rate.
\subparagraph*{\textbf{Control of $\mathbf{\Ef\left\{\summ \Ef\left((\vmone)^{2} |\In \right)\right\}^{\frac{2q}{2}}}$}}\hspace*{\fill} \\
Note that by \eqref{eqn:vmone_third_order_pre_final},
\be
\Ef\left((\vmone)^{2q}|\In\right)& \leq \constf^q \frac{\Nm^{q}(\Nm-1)^{2q}(\Nm-2)^{2q}}{\pnm^{2q}n^{2q}(n-1)^{2q}(n-2)^{2q}}\\
& \leq C_q(\psimax,\fmax, \fmin) \frac{\Nm^{q}(\Nm-1)^{2q}(\Nm-2)^{2q}}{(n-1)^{2q}(n-2)^{2q}}
\ee
Therefore,
\be
\Ef\left((\vmone)^{2}|\In\right)&\leq  C_2(\psimax,\fmax, \fmin) \frac{\Nm(\Nm-1)^{2}(\Nm-2)^{2}}{(n-1)^{2}(n-2)^{2}}
\ee

Therefore, by Theorem 2 of \cite{shao2000comparison}, 
\be
\Ef\left\{\summ \Ef\left((\vmone)^{2} |\In \right)\right\}^{\frac{2q}{2}} & \leq C_2^*\Ef\left\{\summ\frac{\Nm'(\Nm'-1)^{2}(\Nm'-2)^{2}}{n^{4}}\right\}^{\frac{2q}{2}}
\ee
where $\Nm'$, $m=1,\ldots,M_n$ are independent with the same marginals as $\Nm$, $m=1,\ldots,M_n$. Now, 
\be
\ & \Ef\left\{\summ\frac{\Nm'(\Nm'-1)^{2}(\Nm'-2)^{2}}{n^{4}}\right\}^{\frac{2q}{2}} \\& \leq C_q \left[\begin{array}{c}\Ef\left\{\summ\left(\frac{\Nm'(\Nm'-1)^{2}(\Nm'-2)^{2}}{n^{4}}-\Ef\left(\frac{\Nm'(\Nm'-1)^{2}(\Nm'-2)^{2}}{n^{4}}\right)\right)\right\}^{q}\\+\left\{\summ\Ef\left(\frac{\Nm'(\Nm'-1)^{2}(\Nm'-2)^{2}}{n^{4}}\right)\right\}^{q}\end{array}\right]\\
\label{eqn:vmone_third_order_conditional_main}
\ee
The first term in the above summand can be controlled by Marcinkiewicz – Zygmund Inequality as follows,
\be
\ &  \Ef\left\{\summ\left(\frac{\Nm'(\Nm'-1)^{2}(\Nm'-2)^{2}}{n^{4}}-\Ef\left(\frac{\Nm'(\Nm'-1)^{2}(\Nm'-2)^{2}}{n^{4}}\right)\right)\right\}^{q}\\
& \leq C_q M_n^{q}M_n^{-\frac{q}{2}}\frac{1}{M_n}\summ 
\Ef\left(\frac{\Nm'(\Nm'-1)^{2}(\Nm'-2)^{2}}{n^{4}}-\Ef\left(\frac{\Nm'(\Nm'-1)^{2}(\Nm'-2)^{2}}{n^{4}}\right)\right)^q\\
& \leq C_q' M_n^{\frac{q}{2}-1}\summ \Ef\left\{\left(\frac{\Nm'(\Nm'-1)^{2}(\Nm'-2)^{2}}{n^{4}}\right)\right\}^q \label{eqn:vmone_third_order_conditional_part1}
\ee
Therefore, it is enough to control $\Ef\left\{\left(\frac{\Nm'(\Nm'-1)^{2}(\Nm'-2)^{2}}{n^{4}}\right)\right\}^q$ for this part of the proof. But, by Lemma 5.6 of \cite{rltv2015}, we have that there exists a constant $C_q(\fmax,\psi_0)$ such that
\be
\Ef\left\{\left(\frac{\Nm'(\Nm'-1)^{2}(\Nm'-2)^{2}}{n^{4}}\right)\right\}^q \leq \frac{C_q(\fmax,\psi_0)}{n^{4q}}. \label{eqn:vmone_third_order_conditional_part1_final}
\ee
By a similar argument,
\be
\Ef\left(\frac{\Nm'(\Nm'-1)^{2}(\Nm'-2)^{2}}{n^{4}}\right) & \leq \frac{C_q(\fmax,\psi_0)}{n^4}\label{eqn:vmone_third_order_conditional_part2_final}
\ee
Combining \eqref{eqn:vmone_third_order_conditional_main} with \eqref{eqn:vmone_third_order_conditional_part1}, \eqref{eqn:vmone_third_order_conditional_part1_final} and \eqref{eqn:vmone_third_order_conditional_part2_final}, we have, 
\be
\ & \Ef\left\{\summ\frac{\Nm'(\Nm'-1)^{2}(\Nm'-2)^{2}}{n^{4}}\right\}^{\frac{2q}{2}} \\
& \leq C_q\left[M_n^{\frac{q}{2}-1}M_n \frac{C_q(\fmax,\psi_0)}{n^{4q}}+\left(M_n \frac{C_q(\fmax,\psi_0)}{n^4}\right)^{q} \right]\leq C_q'(\fmax,\psi_0)\frac{k_3}{n^2}.
\ee
This completes the control of $\Ef\left\{\summ \Ef\left((\vmone)^{2} |\In \right)\right\}^{\frac{2q}{2}}$.

\subparagraph*{\textbf{Control of $\mathbf{\Ef\left\{\summ \Ef\left((\vmtwo)^{2} |\In \right)\right\}^{\frac{2q}{2}}}$}}\hspace*{\fill} \\
Note that by \eqref{eqn:Imtwoone_final} and \eqref{eqn:Imtwotwo_final},
\be
\ & \Ef\left\{\summ \Ef\left((\vmtwo)^{2} |\In \right)\right\}^{\frac{2q}{2}}\\
& \leq C_q(\psimax,\fmax,\fmin)\Ef\left[\begin{array}{c}\left(\frac{k_3}{n^3}\right)\summ \left(\Nm(\Nm-1)(\Nm-2)\right)^{2}\\+\left(\frac{k_3}{n^2}\right)\frac{1}{M_n} \summ\left(\Nm(\Nm-1)(\Nm-2)\right)^{2}\end{array}\right]^q\\
& \leq C_q(\psimax,\fmax,\fmin)\Ef\left[\begin{array}{c}\left(\frac{k_3}{n^3}\right)\summ \left(\Nm'(\Nm'-1)(\Nm'-2)\right)^{2}\\+\left(\frac{k_3}{n^2}\right)\frac{1}{M_n} \summ\left(\Nm'(\Nm'-1)(\Nm'-2)\right)^{2}\end{array}\right]^q\\
\label{eqn:vmtwo_third_order_contribution_square_term}
\ee
where the last inequality in the above display is again by Theorem 2 of \cite{shao2000comparison}, 
 with $\Nm'$, $m=1,\ldots,M_n$ are independent with the same marginals as $\Nm$, $m=1,\ldots,M_n$. Proceeding as the last section, we can apply the Marcinkiewicz – Zygmund Inequality to a centered version of the right hand side of above and obtain that 
 \be
 \ & \Ef\left\{\summ \Ef\left((\vmtwo)^{2} |\In \right)\right\}^{\frac{2q}{2}}\\
  & \leq C_q(\psimax,\fmax,\fmin)\Ef\left[\begin{array}{c}\left(\frac{k_3}{n^3}\right)\summ \left(\Nm'(\Nm'-1)(\Nm'-2)\right)^{2}\\+\left(\frac{k_3}{n^2}\right)\frac{1}{M_n} \summ\left(\Nm'(\Nm'-1)(\Nm'-2)\right)^{2}\end{array}\right]^q\\
  & \leq C_q(\psimax,\fmax,\fmin)\left[\begin{array}{c}\left(\frac{k_3}{n^3}\right)^q\left\{M_n^q M_n^{-\frac{q}{2}-1}+M_n^q\right\} \\+ \left(\frac{k_3}{n^2}\right)^q\left\{ M_n^{-\frac{q}{2}-1}+1\right\}\end{array}\right] \\
  &\leq C_q(\psimax,\fmax,\fmin) \left(\frac{k_3}{n^2}\right)^q
 \ee
\subparagraph*{\textbf{Control of $\mathbf{\Ef\left\{\summ \Ef\left((\vmthree)^{2} |\In \right)\right\}^{\frac{2q}{2}}}$}}\hspace*{\fill} \\
Note that by \eqref{eqn:vmthree_third_order_pre_final_bound},
\be
\Ef\left((\vmthree)^{2}\mid \In\right) & \leq C_2(\psimax,\fmax,\fmin)\left((\Nm-1)(\Nm-2)\right)^{2q} \frac{k_3}{n^3} 
\ee
which is similar to the first term of \eqref{eqn:vmtwo_third_order_contribution_square_term} which we controlled earlier.

\end{proof}

\subsection*{Proof of Theorem \ref{theorem_adptation_cubic}}
\begin{proof}
The proof is a simple application of Theorem \ref{theorem_modified_lepski} with slight modification to account for the fact that the estimator depends on three truncation points $(k_1,k_2,k_3)$ instead of single level of truncation parametrization considered in Theorem \ref{theorem_modified_lepski}. However, as noted in equations \eqref{eqn:bias_third_order}, \ref{eqn:comparative_bias_third_order}, and Theorem \ref{theorem_fcube_moment}, the required problem can be equivalently parametrized by the highest order of truncation i.e. $k_3$. Since, $k_3$ decides a corresponding smoothness index $\beta$ as the solution of $k_3=n^{\frac{1}{1+4\beta}}$, this also decides the other two levels of truncation. As a result, the proof of Theorem \ref{theorem_modified_lepski} goes through in a similar fashion. We do not provide the details here for the sake of brevity.
\end{proof}

\section{Proof of Lemmas}
\subsection*{Proof of Lemma \ref{lemma_wrong_jhat}}
\begin{proof}
\be
\Pf\left(\I(\jhat=1)\right) & \leq \Pf\left[|\ihatk-\Ik|\geq C\sqrt{\log{n}}\frac{\sqrt{\kstar}}{n}\right]
\ee
since there exists a constant $C_{\psi}$ such that $\Ik \leq C_{\psi}k^{-2\beta_0}\ll \frac{\sqrt{\kstar}}{n}$. Now, by Hoeffding decomposition, we have
\be 
\ & \ihatk-\Ik \\&=U_{n,*}-U_{n,0}-\Ef\left(U_{n,*}-U_{n,0}\right)\\
&=\frac{1}{n(n-1)}\sum_{i \neq j}\left[K_{*}(X_i,X_j)-K_0(X_i,X_j)-\Ef\left(K_{*}(X_i,X_j)-K_0(X_i,X_j)\right)\right]
\\&=\frac{2}{n}\sum_{i=1}^n\left[\mathbb{E}_{X_j}(K_{*}(X_i,X_j)-K_0(X_i,X_j))-\Ef\left(K_{*}(X_i,X_j)-K_0(X_i,X_j)\right)\right]
\\&+\frac{1}{n(n-1)}\sum_{i \neq j}\left[
\begin{array}{c}
K_{*}(X_i,X_j)-K_0(X_i,X_j)-\mathbb{E}_{X_j}(K_{*}(X_i,X_j)
-K_0(X_i,X_j))\\
-\mathbb{E}_{X_i}(K_{*}(X_i,X_j)-K_0(X_i,X_j))+\Ef\left(K_{*}(X_i,X_j)-K_0(X_i,X_j)\right)
\end{array}
\right]\\
&:=T_1+T_2
\ee
Therefore, 
\be
\ & \Pf\left[|\ihatk-\Ik|\geq C\sqrt{\log{n}}\frac{\sqrt{\kstar}}{n}\right]\\
&=\Pf\left[|T_1+T_2|\geq C\sqrt{\log{n}}\frac{\sqrt{\kstar}}{n}\right]\\
&\leq \Pf\left[|T_1|\geq C\delta_n\sqrt{\log{n}}\frac{\sqrt{\kstar}}{n}\right]+\Pf\left[|T_2|\geq C(1-\delta_n)\sqrt{\log{n}}\frac{\sqrt{\kstar}}{n}\right]
\ee
for some sequence $\delta_n \geq 0$ to be specified later. 
\subsubsection*{Control of $\Pf\left[|T_1|\geq C\delta_n\sqrt{\log{n}}\frac{\sqrt{\kstar}}{n}\right]$}
\be
\ &\Pf\left[|T_1|\geq C\delta_n\sqrt{\log{n}}\frac{\sqrt{\kstar}}{n}\right] \\&= \Pf\left[|\frac{2}{n}\sum_{i=1}^n\left[\mathbb{E}_{X_j}(K_{*}(X_i,X_j)-K_0(X_i,X_j))-\Ef\left(K_{*}(X_i,X_j)-K_0(X_i,X_j)\right)
	\right]|\geq C\delta_n\sqrt{\log{n}}\frac{\sqrt{\kstar}}{n}\right]
\ee
Let $R_i=\mathbb{E}_{X_j}(K_{*}(X_i,X_j)-K_0(X_i,X_j))$. Then the above display can be bounded from above by Markov's Inequality as follows,
\be
\ & \Pf\left[\frac{2}{n}|\sum_{i=1}^n\left(R_i-\Ef(R_i)\right)|\geq c\delta_n\sqrt{\log{n}}\frac{\sqrt{\kstar}}{n}\right]\\& \leq \frac{\Ef\left[\sum_{i=1}^n|R_i-\Ef(R_i)|\right]^{2r}}{\left(c\delta_n\sqrt{\log{n}}\frac{\sqrt{\kstar}}{n}n\right)^{2r}}\\
& \leq C_r n^{2r} \frac{1}{n^r}\sum_{i=1}^n\Ef\left[|R_i-\Ef(R_i)|^{2r}\right]\frac{1}{\left(c\delta_n\sqrt{\log{n}}\frac{\sqrt{\kstar}}{n}n\right)^{2r}}\\
& \leq C_r n^{2r} \frac{n}{n^r}\Ef\left[|R_i|^{2r}\right]\frac{1}{\left(c\delta_n\sqrt{\log{n}}\frac{\sqrt{\kstar}}{n}n\right)^{2r}}\\ \label{eqn:moment_first_order_degenrate_kernel}
\ee
where the last two lines is by applications of Marcinkiewicz – Zygmund Inequality followed by Jensen's Inequality. Next, we evaluate $\Ef(|R|^{2r})$.
\be
\Ef(|R|^{2r})&=\mathbb{E}_{f_{X_1}}\left[\left\{\mathbb{E}_{f_{X_2}}\left(K_{*}(X_1,X_2)-K_{0}(X_1,X_2)\right)\right\}^{2r}\right]\\
&\leq 2^{2r-1}\left[\mathbb{E}_{f_{X_1}}\left\{\mathbb{E}_{f_{X_2}}\left(K_{*}(X_1,X_2)\right)\right\}^{2r}+\mathbb{E}_{f_{X_1}}\left\{\mathbb{E}_{f_{X_2}}\left(K_{0}(X_1,X_2)\right)\right\}^{2r}\right]\\
\ee
Let us evaluate $\mathbb{E}_{f_{X_1}}\left\{\mathbb{E}_{f_{X_2}}\left(K_{k}(X_1,X_2)\right)\right\}^{2r}$ for a general $k$. We first consider,
\be
\mathbb{E}_{f_{X_2}}\left(K_{k}(X_1,X_2)\right)&=\sum_{l=1}^k{\psi_{k,l}(X_1)\mathbb{E}_{f_{X_2}}(\psi_{k,l}(X_2))}\\
&=\sum_{l=1}^k{\psi_{k,l}(X_1)\alpha_{k,l}}\leq (\sup_{l}\alpha_{k,l})\sum_{l=1}^k{|\psi_{k,l}(X_1)|}\\
& \leq C_{\psi_0}\|f\|_{\infty}^2
\ee
where the last line follows by Lemma \ref{lemma_wavelet_properties}. Therefore, for some constant $C_{\psi_0}>0$, 
$$\mathbb{E}_{f_{X_1}}\left\{\mathbb{E}_{f_{X_2}}\left(K_{k}(X_1,X_2)\right)\right\}^{2r}\leq C_{\psi_0}^{r}\|f\|_{\infty}^2.$$
Therefore from equation \ref{eqn:moment_first_order_degenrate_kernel}, we have
\be
\ & \Pf\left[\frac{2}{n}|\sum_{i=1}^n\left(R_i-\Ef(R_i)\right)|\geq c\delta_n\sqrt{\log{n}}\frac{\sqrt{\kstar}}{n}\right]\\&\leq C_r n^{2r} \frac{n}{n^r}\Ef\left[|R|^{2r}\right]\frac{1}{\left(c\delta_n\sqrt{\log{n}}\frac{\sqrt{\kstar}}{n}n\right)^{2r}}\\
& \leq C_{\psi_0}^{r}\|f\|_{\infty}^2 \frac{n^{r+1}}{\left(c\delta_n\sqrt{\log{n}}\sqrt{\kstar}\right)^{2r}} \leq  \frac{C_{\psi_0}^{r}\|f\|_{\infty}^2}{n}
\ee
by choosing $r$ large enough and $\delta_n>0$ at most a sub-algebraic sequence going to $0$. To see this note that $\frac{n^{r+1}}{\left(\delta_n\sqrt{\log{n}}\sqrt{\kstar}\right)^{2r}}=\frac{1}{\left(\delta_n\sqrt{\log{n}}\right)^{2r}}\frac{(\log{n})^{\frac{r}{1+4\beta_1}}}{n^{-(r+1)+\frac{2r}{1+4\beta_1}}}$. Choosing $r \geq \frac{2}{1-4\beta_1}$ yields the desire result.
\subsubsection*{\textbf{Control of $\mathbf{\Pf\left[|T_2|\geq C(1-\delta_n)\sqrt{\log{n}}\frac{\sqrt{\kstar}}{n}\right]}$}}

Lemma \ref{lemma_exponential_inequality_u_statistics} and Lemma \ref{lemma_evaluation_A1_to_A4} imply that for some deterministic constant $C_{\psi_0}$ and all $t>0$,
\be
\Pf\left[|\frac{1}{n(n-1)}\sum_{i \neq j}{H(X_i,X_j)}|> \frac{C_{\psi_0}}{n-1}\left(\sqrt{k_{*} t}+t+\sqrt{\frac{\kstar}{n}}t^{\frac{3}{2}}+\frac{\kstar}{n}t^2\right)\right]& \leq 6 e^{-t}
\ee
Using, $2t^{\frac{3}{2}}\leq t+t^2$ we have,
\be
\Pf\left[|T_2|> \frac{C_{\psi_0}}{n-1}\left(\sqrt{k_{*} t}+t+\sqrt{\frac{\kstar}{4n}}(t+t^2)+\frac{\kstar}{n}t^2\right)\right]& \leq 6 e^{-t}
\ee
Setting $t=\log{n}$ we have,
\be
\Pf\left[|T_2|> \frac{C_{\psi_0}}{n-1}\left(\sqrt{k_{*} \log{n}}+\log{n}+\sqrt{\frac{\kstar}{4n}}(\log{n}+(\log{n})^2)+\frac{\kstar}{n}(\log{n})^2\right)\right]& \leq \frac{6}{n}
\ee

Now, since $\frac{\sqrt{k_{*} \log{n}}}{n-1}\geq \max\left\{\frac{\log{n}}{n-1}, \frac{1}{n-1}\sqrt{\frac{\kstar}{4n}}(\log{n}+(\log{n})^2), \frac{\kstar}{n(n-1)}(\log{n})^2\right \}$ and $\frac{1}{n-1}>\frac{1}{2n}$ for sufficiently large $n$, we have,

\be
\Pf\left[|T_2|> 2C_{\psi_0}\frac{\sqrt{k_{*} \log{n}}}{n}\right]& \leq \frac{6}{n}
\ee
This is enough to prove the desired result and ends the proof of Lemma \ref{lemma_wrong_jhat}.
\end{proof}

\subsection*{Proof of Lemma \ref{lemma_error_lqnorm}}
\be
\Ef\left[\left(U_{n,1}-\phi(f)\right)^{2q}\right]& \leq 2^{2q-1}\Ef\left[\left(U_{n,1}-\Ef\left(U_{n,1}\right)\right)^{2q}\right]+2^{2q-1}\Ef\left[\left(\Ef\left(U_{n,1}-\phi(f)\right)\right)^{2q}\right]\\
&\leq 2^{2q-1}\Ef\left[\left(U_{n,1}-\Ef\left(U_{n,1}\right)\right)^{2q}\right]+C_{\psi_0}^{2q}\fmax^{2q}k_1^{-2q\beta_0}\\
&\leq 2^{2q-1}\Ef\left[\left(U_{n,1}-\Ef\left(U_{n,1}\right)\right)^{2q}\right]+C_{\psi_0}^{2q}\fmax^{2q}\left(\frac{k_1}{n^2}\right)^{q}
\ee
where the second last inequality follows from Lemma \ref{lemma_bias_truncated_ustatistics}. Therefore, it is enough to show that
\be
\sup_{f \in H(\beta_0)}\Ef\left[\left(U_{n,1}-\Ef\left(U_{n,1}\right)\right)^{2q}\right] \leq C_{\psi_0}^q \fmax^{Cq}\left(\frac{k_1}{n^2}\right)^q 
\label{eqn:qth_moment_ustat}
\ee
We will give two proofs of inequality \ref{eqn:qth_moment_ustat}. The first will be valid for any kernels based on compactly supported wavelet bases. The second will be valid only for Haar wavelets. 

\subsubsection*{\textbf{First Proof of Inequality \ref{eqn:qth_moment_ustat}}} \hspace*{\fill} \\
\be
\ & \Ef\left[\left(U_{n,1}-\Ef\left(U_{n,1}\right)^{2q}\right)\right] \\ & \leq C_q
\left\{\begin{array}{c}\Ef\left[\left(\frac{2}{n}\sumin \left\{\Ej(K_1(X_i,X_j))-\Ef(K_1(X_i,X_j))\right\}\right)^{2q}\right]\\+\Ef\left[\left(\frac{1}{n(n-1)}\sumij H_1(X_i,X_j)\right)^{2q}\right]\end{array}\right\}\\
&=C_q(I_1+I_2)
\ee
where $H_1(X_i,X_j)=K_1(X_i,X_j)-\Ei(K_1(X_i,X_j))-\Ej(K_1(X_i,X_j))+\Ef(K_1(X_i,X_j))$. We first bound $I_1$ by Marcinkiewicz – Zygmund Inequality. To this end, first write $R_i=\Ej(K_1(X_i,X_j))$. Therefore, by Marcinkiewicz – Zygmund Inequality and Jensen's Inequality, we have,
\be
I_1&=2^{2q}\Ef\left[\left(\frac{1}{n}\sumin \left\{R_i-\Ef(R_i)\right\}\right)^{2q}\right]\\
&\leq C_q n^{-2q/2}\frac{1}{n}\sumin \Ef\left[\left(R_i-\Ef(R_i)\right)^{2q}\right]\\
&\leq  2 C_q n^{-q} \Ef(|R_1|^{2q})
\ee 
Therefore, it is enough to control $\Ef(|R_i|^{2q})=\Ei\left[\mid Ej\left\{K_1(X_i,X_j)\right\} \mid^{2q}\right]$.
\be
\mid Ej\left\{K_1(X_i,X_j)\right\} \mid &=\mid \sumlk \psilk(X_i)\Ej(\psilk(X-j)) \mid\\
&=\mid \sumlk \alphalk \psilk(X_i) \mid \leq \sup (|\alphalk|) \sumlk |\psilk(X_i)|\\
&\leq C_{\psi_0}\fmax \frac{1}{\sqrt{k_1}}C_{\psi_0}\fmax \sqrt{k_1}=C_{\psi_0}^2\fmax^2
\ee
by Lemma \ref{lemma_wavelet_properties}. Therefore, 
$$\Ef(|R_i|^{2q})\leq C_{\psi_0}^2\fmax^2 $$
as well. This in turn implies 
$$I_1 \leq 2 C_q n^{-q} \Ef(|R_1|^{2q}) \leq 2 C_q C_{\psi_0}^2\fmax^2 n^{-q}\leq 2 C_q C_{\psi_0}^2\fmax^2 \left(\frac{k_1}{n^2}\right)^q$$
as required. 
Therefore, it is enough to show that 
\be
I_2\leq C_{\psi_0}^q \fmax^{Cq}\left(\frac{k_1}{n^2}\right)^q 
\ee 
i.e. we need to control $\Ef \left(\n2 \sumij H_1(X_i,X_j)\right)^{2q}$ where $H_1(X_i,X_j)=K_1(X_i,X_j)-\Ej(K_1(X_i,X_j))-\Ei(K_1(X_i,X_j))+\Ef(K_1(X_i,X_j))$. Following the arguments of Lemma \ref{lemma_exponential_inequality_u_statistics} and Lemma \ref{lemma_evaluation_A1_to_A4}, we can also show that for all $t>0$,
$$\Pf\left[|\n2 \sumij H_1(X_i,X_j)|>\frac{\const}{n-1}\left(\sqrt{k_1 t}+t+\sqrt{\frac{k_1}{n}}t^{\frac{3}{2}}+\frac{k_1}{n^2}t^2\right)\right]\leq 6e^{-t}.$$
Using $2t^{\frac{3}{2}}\leq t^2+t$, we have,
$$\Pf\left[|\n2 \sumij H_1(X_i,X_j)|>\frac{\const}{n-1}\left(\sqrt{k_1 t}+t\left(\sqrt{\frac{k_1}{4n}}+1\right)+\left(\sqrt{\frac{k_1}{4n}}+\frac{k_1}{n^2}\right)t^2\right)\right]\leq 6e^{-t}.$$
Calling, $\n2 \sumij H_1(X_i,X_j)=Z$, we have an exponential inequality of the form,
$$\Pf\left[|Z|>a_1\sqrt{t}+a_2 t +a_3 t^2\right]\leq 6 e^{-t},$$
with $a_1=\frac{\const}{n-1}\sqrt{k_1},a_2=\frac{\const}{n-1}\left(\sqrt{\frac{k_1}{4n}}+1\right),a_3=\frac{\const}{n-1}\left(\sqrt{\frac{k_1}{4n}}+\frac{k_1}{n^2}\right)$. From this we need to estimate,
$$\Ef(|Z|^{2q})=2q\int_0^{\infty}x^{2q-1}\Pf(|Z|\geq x)dx.$$
We do this as follows. Suppose, $t(x)$ solve for $a_1\sqrt{t(x)}+a_2 t(x) +a_3 t^2(x)=x$. Now, trivially $a_1\sqrt{t}+a_2 t +a_3 t^2$ is an increasing function of $t \in \mathbb{R}^{+}$. Therefore, if we can find a function $h(x)\geq 0$ such that $a_1\sqrt{h(x)}+a_2 h(x) +a_3 h^2(x)\leq x$ for all $x \geq 0$, then $h(x)\leq t(x)$ for all $x \geq 0$.It is not too difficult to see that one such $h(x)$ is given by $h(x)=b_1 x^2 \wedge b_2 x \wedge b_3 \sqrt{x}$ where for $b_1=\frac{c_1}{a_1^2}, b_2=\frac{c_2}{a_2},b_3=\frac{c_3}{\sqrt{a_3}} $ for fixed constants $c_1,c_2,c_3>0$. Therefore,
\be
\Ef(|Z|^{2q})&=2q\int_0^{\infty}x^{2q-1}\Pf(|Z|\geq x)dx\\
&\leq 2q \int_0^{\infty}x^{2q-1}\Pf(|Z|\geq a_1\sqrt{h(x)}+a_2 h(x) +a_3 h^2(x))dx\\
&\leq 12q \int_0^{\infty}x^{2q-1} e^{-h(x)}dx\\
& = 12q \int_0^{\infty}x^{2q-1} e^{-\left\{b_1 x^2 \wedge b_2 x \wedge b_3 \sqrt{x}\right\}}dx\\
&\leq 12q \left[\int_0^{\infty}x^{2q-1} e^{-b_1 x^2}dx+\int_0^{\infty}x^{2q-1} e^{- b_2 x}dx+\int_0^{\infty}x^{2q-1} e^{-b_3 \sqrt{x}}dx\right]\\
&=12q\left(\frac{\Gamma(q)}{2b_1^{q}}+\frac{\Gamma(2q)}{b_2^{2q}}+\frac{2\Gamma(4q)}{b_3^{4q}}\right)\leq \const^q \left(\frac{k_1}{n^2}\right)^{q}
\ee
by our choices of $b_1,b_2,b_3$. This completes the first proof.

\subsection*{Proof of Lemma \ref{lemma_l_less_than_lc}}
Pick any $l \leq l_c(C^*)$. Then 
\be
\ & \Pf(\jhat=l)\leq \Pf(\hat{I}^2(k_l^*,k_j^*)\leq C_{\mathrm{opt}}^2 \log{n} R(k_j^*))\\
&=\Pf\left({\hat{I}(k_l^*,k_j^*)-I(k_l^*,k_j^*)}\leq C_{\mathrm{opt}} \sqrt{\log{n}}\sqrt{R(k_j^*)}-{I(k_l^*,k_j^*)}\right)\\&-\Pf\left({\hat{I}(k_l^*,k_j^*)-I(k_l^*,k_j^*)}<- C_{\mathrm{opt}} \sqrt{\log{n}}\sqrt{R(k_j^*)}-{I(k_l^*,k_j^*)}\right)\\ \label{eqn:l_less_than_lc}
\ee
Recall that by $l\leq l_c(C^*)$, we have that $I^2(k_l^*,k_j^*)>C^* \log{n} R(k_j^*)$. We consider two cases.
\subsubsection*{\textbf{Case 1: $\mathbf{I(k_l^*,k_j^*)>\sqrt{C^* \log{n} R(k_j^*)}}$}}\hspace*{\fill} \\
In this case we have that the right hand side of \eqref{eqn:l_less_than_lc} is bounded by 
\be
\ & \Pf\left({\hat{I}(k_l^*,k_j^*)-I(k_l^*,k_j^*)}\leq C_{\mathrm{opt}} \sqrt{\log{n}}\sqrt{R(k_j^*)}-\sqrt{C^* \log{n} R(k_j^*)}\right)\\
&+\Pf\left({\hat{I}(k_l^*,k_j^*)-I(k_l^*,k_j^*)}<- C_{\mathrm{opt}} \sqrt{\log{n}}\sqrt{R(k_j^*)}-\sqrt{C^* \log{n} R(k_j^*)}\right)\\
& \leq \frac{\const}{n},
\ee
where the last line follows from the proof of Lemma \ref{lemma_wrong_jhat} along the lines of Lemma \ref{lemma_exponential_inequality_u_statistics} and \ref{lemma_evaluation_A1_to_A4}, provided $C_{\mathrm{opt}}$ is according to the constant specified in Theorem \ref{theorem_two_point_quadratic} and $C^*>4C_{\mathrm{opt}}^2$.
\subsubsection*{\textbf{Case 2: $\mathbf{I(k_l^*,k_j^*)<-\sqrt{C^* \log{n} R(k_j^*)}}$}}\hspace*{\fill} \\
In this case we have that the right hand side of \eqref{eqn:l_less_than_lc} equals,
\be
\ & 1-\Pf\left({\hat{I}(k_l^*,k_j^*)-I(k_l^*,k_j^*)}> C_{\mathrm{opt}} \sqrt{\log{n}}\sqrt{R(k_j^*)}-\sqrt{C^* \log{n} R(k_j^*)}\right)\\
&-1+\Pf\left({\hat{I}(k_l^*,k_j^*)-I(k_l^*,k_j^*)}\geq- C_{\mathrm{opt}} \sqrt{\log{n}}\sqrt{R(k_j^*)}-\sqrt{C^* \log{n} R(k_j^*)}\right)\\
& \leq \frac{\const}{n},
\ee
where the last line once again follows from the proof of Lemma \ref{lemma_wrong_jhat} along the lines of Lemma \ref{lemma_exponential_inequality_u_statistics} and \ref{lemma_evaluation_A1_to_A4}, provided $C_{\mathrm{opt}}$ is according to the constant specified in Theorem \ref{theorem_two_point_quadratic} and $C^*>4C_{\mathrm{opt}}^2$.

\subsection*{Proof of Lemma \ref{lemma_l_bigger_than_jplusone}}
\begin{proof}
\be
\Pf(\jhat \geq j+2)&\leq \Pf\left(\exists l>j: \hat{I}^2(k_{j+1}^*,k_l^*)>C_{\mathrm{opt}}^2 \log{n} R(k_l^*)\right)\\
& \leq \sum\limits_{l=j+2}^{N-1}\Pf\left(\hat{I}^2(k_{j+1}^*,k_l^*)>C_{\mathrm{opt}}^2 \log{n} R(k_l^*)\right)
\ee
Now, for any $l\geq j+2$,
\begin{eqnarray*}
\ & \Pf\left(\hat{I}^2(k_{j+1}^*,k_l^*)>C_{\mathrm{opt}}^2 \log{n} R(k_l^*)\right)\\&= \Pf\left({\hat{I}(k_{j+1}^*,k_l^*)-I(k_{j+1}^*,k_l^*)}> C_{\mathrm{opt}} \sqrt{\log{n}}\sqrt{R(k_l^*)}-{I(k_{j+1}^*,k_l^*)}\right)\\
&+\Pf\left({\hat{I}(k_{j+1}^*,k_l^*)-I(k_{j+1}^*,k_l^*)}<- C_{\mathrm{opt}} \sqrt{\log{n}}\sqrt{R(k_l^*)}-{I(k_{j+1}^*,k_l^*)}\right).
\end{eqnarray*}
Thereafter, note that $\frac{I^2(k_{j+1}^*,k_l^*)}{R(k_l^*)}\leq \const \frac{(k_{j+1}^*)^{-4\beta_f}}{\frac{k_l^*}{n^2}}$, which has a power of $n$ equal to $\frac{8\beta_l}{1+4\beta_l}-\frac{8\beta_f}{1+4\beta_{j+1}}\leq 0$ since $\beta_l<\beta_{j+1}\leq \beta_f$. Hence arguing as  in proof of Lemma \ref{lemma_wrong_jhat} along the lines of Lemma \ref{lemma_exponential_inequality_u_statistics} and \ref{lemma_evaluation_A1_to_A4}, we have the desired result provided $C_{\mathrm{opt}}$ is according to the constant specified in Theorem \ref{theorem_two_point_quadratic}.
\end{proof}

\subsection*{Proof of Lemma \ref{lemma_control_of_alphanm}}
\begin{proof}
We begin by noting that it is enough to control 
\be
\int_{\chinm} \int_{\chinm}\int_{\chinm} \int K_{k_1}(x,x_1)K_{k_2}(x,x_2)K_{k_3}(x,x_3)f(x_1)f(x_2)f(x_3)dx dx_1 dx_2 dx_3
\ee
for arbitrary $M_n\leq k_1\leq k_2 \leq k_3$. We assume that $k_1$ divides $k_2$, $k_2$ divides $k_3$ and $M_n$ divides $k_1$. The proof for a general tuple follows by similar arguments with suitable obvious modifications. The above integral equals,
\be
\int_{\chinm} \int_{\chinm}\int_{\chinm} \int \sum_{l_1}\sum_{l_2}\sum_{l_3}\left\{\begin{array}{c} \psi_{l_1}^{k_1}(x)\psi_{l_1}^{k_1}(x_1)\psi_{l_2}^{k_2}(x)\\
	\psi_{l_2}^{k_2}(x_2)\psi_{l_3}^{k_3}(x)\psi_{l_3}^{k_3}(x_3)\end{array}\right\}\prod_{i=1}^3f(x_i)dx_i
\ee
where $\psi_{l}^{k}(x)$ is $k$ dilated an $l$ shifted wavelet bases. Taking the integral inside the summation, any of the summand equals at most $\constf \frac{1}{k_3}k_1 k_2 k_3\frac{1}{k_1 k_2 k_3}=\frac{\constf}{k_3}$. The reason being, each of the summand corresponds to an integral of $x_1, x_2,x_3$ over intervals of length $\frac{1}{k_1}$, $\frac{1}{k_2}$, and $\frac{1}{k_3}$ respectively.  Moreover, since $k_3$ is the finest refinement, the integral over x is simply over an interval of length $\frac{1}{k_3}$. The bound on the summand then follows. Therefore, we now need to count the number of summands that contribute to the sum above. This can be argued as follows. For every given subinterval $\chinm$ there are less than $\const \frac{k_1}{M_n}$ subintervals of length $\frac{\const}{k_1}$ with support intersecting that of some $\psi_{l_1}^{k_1}$. For each given subinterval of length $\frac{\const}{k_1}$ with support intersecting that of some $\psi_{l_1}^{k_1}$, there are less than $\const \frac{k_2}{k_1}$ subintervals of length $\frac{\const}{k_2}$ with support intersecting that of some $\psi_{l_2}^{k_2}$. Finally, for each given subinterval of length $\frac{\const}{k_2}$ with support intersecting that of some $\psi_{l_2}^{k_2}$, there are less than $\const \frac{k_3}{k_2}$ subintervals of length $\frac{\const}{k_3}$ with support intersecting that of some $\psi_{l_3}^{k_3}$. Therefore, total number of terms contributing to the sum above is at most $\const \frac{k_1}{M_n}\frac{k_2}{k_1}\frac{k_3}{k_2}=\const\frac{k_3}{M_n}$. This implies that in absolute value, each $\alpha_{n,m}$ is at most $\constf \frac{1}{k_3}\frac{k_3}{M_n}=\constf \frac{1}{M_n}$, as promised.
\end{proof}

\subsection*{Proof of Lemma \ref{lemma_moment_third_order_U_stat}}

\begin{proof}
	We evoke Proposition 2.4 of \cite{gine2000exponential} (page 9) which implies	that for a universal constant depending on $q$
	\be
		\ & E\left\vert \frac{1}{m\left( m-1\right) \left( m-2\right) }%
		\sum\limits_{i_{1}\neq i_{2}\neq i_{3}}K\left(
		X_{i_{1}},X_{i_{2}},X_{i_{3}}\right) \right\vert ^{q}   \\
		&\lesssim \left( m\left( m-1\right) \left( m-2\right) \right)
		^{-q}C_{q}\left\{ 
		\begin{array}{c}
			m^{3q/2}E\left[ \left\vert E\left( K^{2}\left(
			X_{i_{1}},X_{i_{2}},X_{i_{3}}\right) |X_{i_{3}}\right) \right\vert ^{q/2}%
			\right] \\ 
			+m^{q}E\left[ \max_{i_{3}}\left\vert E\left( K^{2}\left(
			X_{i_{1}},X_{i_{2}},X_{i_{3}}\right) |X_{i_{3}}\right) \right\vert ^{q/2}%
			\right] \\ 
			+m^{q/2}E\left[ \max_{i_{3},i_{2}}\left\vert E\left( K^{2}\left(
			X_{i_{1}},X_{i_{2}},X_{i_{3}}\right) |X_{i_{3}},X_{i_{2}}\right) \right\vert
			^{q/2}\right] \\ 
			+E\left[ \max_{i_{3},i_{2},i_{1}}\left\vert \left( K\left(
			X_{i_{1}},X_{i_{2}},X_{i_{3}}\right) \right) \right\vert ^{q}\right]%
		\end{array}%
		\right\} \\ \label{eqn:third_order_bound}
	\ee
	
	Now, we have that for $Z_{1},...,Z_{m}>0$ identically distributed, possibly
	dependent with $E\left( Z_{\left( m\right) }\right) \leq mE\left(
	Z_{1}\right) =E\left( \sum\limits_{i}Z_{i}\right) $ where $Z_{\left(
		m\right) }=\max_{i\in \left\{ 1,...,m\right\} }Z_{i}.$ \ So that \ref{eqn:third_order_bound} is bounded above by 
	\begin{eqnarray*}
		&&C_{q}\left( m\left( m-1\right) \left( m-2\right) \right) ^{-q}C_{q}\left\{ 
		\begin{array}{c}
			m^{3q/2}E\left[ \left\vert K\left( X_{i_{1}},X_{i_{2}},X_{i_{3}}\right)
			\right\vert ^{q}\right] \\ 
			+m^{q+1}E\left[ \left\vert K\left( X_{i_{1}},X_{i_{2}},X_{i_{3}}\right)
			\right\vert ^{q}\right] \\ 
			+m^{q/2+2}E\left[ \left\vert K\left( X_{i_{1}},X_{i_{2}},X_{i_{3}}\right)
			\right\vert ^{q}\right] \\ 
			+m^{3}E\left[ \left\vert K\left( X_{i_{1}},X_{i_{2}},X_{i_{3}}\right)
			\right\vert ^{q}\right]%
		\end{array}%
		\right\} \\
		&\lesssim &C_{q}m^{-q}E\left[ \left\vert K\left(
		X_{i_{1}},X_{i_{2}},X_{i_{3}}\right) \right\vert ^{q}\right]
	\end{eqnarray*}
\end{proof}

\section{Technical Lemmas}
The following lemma regarding the bias of $U_n^{k}$ will be used regularly.
\begin{lemma}\label{lemma_bias_truncated_ustatistics}
	Suppose $f \in H(\beta)$. Then for compactly supported wavelet bases 
	$$\sup_{f}\mid \Ef(U_n^k)-\phi(f)\mid \leq C_{\psi_0}\fmax k^{-2\beta_f}.$$
\end{lemma}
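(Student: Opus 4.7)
The plan is to compute $\Ef(U_n^{(k)})$ explicitly, recognize it as the squared $L_2[0,1]$ norm of the wavelet projection $f_k$, and then invoke the wavelet approximation bound for H\"older balls that is recalled immediately after the definition of $f_k$ in Section~2.

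First I would use the fact that $U_n^{(k)}$ is a U-statistic based on the symmetric kernel $K_k$, so by linearity of expectation and the i.i.d.\ structure,
\begin{equation*}
\Ef\bigl(U_n^{(k)}\bigr) \;=\; \Ef\bigl[K_k(X_1,X_2)\bigr]
\;=\; \int\!\!\int K_k(x_1,x_2)\,f(x_1)\,f(x_2)\,dx_1\,dx_2.
\end{equation*}
Because $K_k$ is a projection kernel onto $\overline{\psi}=\mathrm{lin}\{\psi_{k,j}:j=1,\dots,k\}$, the inner integral $\int K_k(x_1,x_2)f(x_2)\,dx_2$ equals $f_k(x_1)$ by definition of $f_k$. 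Hence $\Ef(U_n^{(k)})=\int f_k f\,d\mu$. Since $f_k$ is the orthogonal $L_2$-projection of $f$ onto $\overline{\psi}$, we have $\langle f-f_k,f_k\rangle_{L_2}=0$, which yields $\int f_k f\,d\mu=\int f_k^2\,d\mu$.

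Combining these identities with $\phi(f)=\int f^2\,d\mu$ and the Pythagorean decomposition $\|f\|_2^2=\|f_k\|_2^2+\|f-f_k\|_2^2$ gives
\begin{equation*}
\Ef\bigl(U_n^{(k)}\bigr)-\phi(f) \;=\; \int f_k^2\,d\mu - \int f^2\,d\mu \;=\; -\,\|f-f_k\|_2^{2}.
\end{equation*}
The approximation property of compactly supported wavelet bases with sufficiently many vanishing moments, recalled in Section~2, asserts $\sup_{f\in H(\beta,C)}\|f-f_k\|_2\lesssim k^{-\beta}$, where the implicit constant depends only on the parent wavelet $\psi_0$ and on $C$ (hence on the a priori bound $\fmax\le M^\ast$ that defines $H(\beta)$). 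Squaring the approximation inequality gives the claimed bound $|\Ef(U_n^{(k)})-\phi(f)|\le C_{\psi_0}\fmax\,k^{-2\beta_f}$ uniformly over $f\in H(\beta)$.

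There is really no main obstacle here: the only substantive input is the orthogonal-projection identity $\int f_k f=\int f_k^2$, which converts the bias into a squared approximation error, and all quantitative content then comes from the standard wavelet approximation theorem quoted in Section~2. The constants are tracked through the approximation statement; no probabilistic argument is needed since only the first-moment behavior of the U-statistic is being controlled.
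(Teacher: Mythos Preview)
Your proof is correct and is exactly the ``simple algebra and properties of compactly supported wavelet bases'' that the paper has in mind (the paper in fact omits the argument entirely). The key steps you give --- computing $\Ef(U_n^{(k)})=\langle f,f_k\rangle=\|f_k\|_2^2$ via the projection property, then applying Pythagoras and the wavelet approximation rate $\|f-f_k\|_2\lesssim k^{-\beta}$ --- are precisely the intended ones.
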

\begin{proof}
The proof  involves simple algebra and properties of compactly supported wavelet bases \citep{hardle1998wavelets} and hence is omitted.

We will be using the following lemma about properties of compactly supported wavelets.
\begin{lemma}\label{lemma_wavelet_properties}
	For kernels based on compactly supported wavelets, as defined by Equation \eqref{eqn:wavelet_expansion}, the following hold.
	\begin{enumerate}
		\item $\|f_k\|_{\infty}\leq C_{\psi_0}\|f\|_{\infty}.$
		\item $\sup_{l}|\alpha_{k,l}|\leq \frac{C_{\psi_0}\|f\|_{\infty}}{k}.$
		\item $\sup_{x}\sum_{l=1}^k|\psi_{k,l}(x)|\leq  C_{\psi_0}. k$
	\end{enumerate}
\end{lemma}
\begin{proof}
	Once again, the proofs follow from simple algebra and properties of compactly supported wavelet bases \citep{hardle1998wavelets}.
\end{proof}
\end{proof}
We will also need the following asymptotic normality of the quadratic estimators \citep{rltv2015}. 
\begin{lemma}\label{lemma_asymp_norm}
	For $f \in H(\beta,c)$ we have $\frac{U_n^{k(\beta)}-\Ef\left(U_n^{k(\beta)}\right)}{\sqrt{Var_f\left(U_n^{k(\beta)}\right)}} \Rightarrow N(0,1)$ where $Var_f\left(U_n^{k(\beta)}\right) \sim \frac{k(\beta)}{n^2}$.
\end{lemma}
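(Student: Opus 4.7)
\subsection*{Proof plan for Lemma \ref{lemma_asymp_norm}}

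The plan is to combine the Hoeffding decomposition of $U_n^{k(\beta)}$ with a central limit theorem for degenerate second-order U-statistics (of Hall or de Jong type). Writing $k = k(\beta)$ throughout, I would first decompose
\begin{equation*}
U_n^{k}-\Ef(U_n^{k}) \;=\; L_n + D_n,
\end{equation*}
with linear part $L_n = (2/n)\sum_{i=1}^n\bigl[f_k(X_i)-\Ef(f_k(X_1))\bigr]$ and degenerate part $D_n = \frac{1}{n(n-1)}\sum_{i\neq j} H(X_i,X_j)$, where $H(x,y)=K_k(x,y)-f_k(x)-f_k(y)+\Ef(K_k(X_1,X_2))$ is the doubly centered projection kernel. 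Here we use $\Ej[K_k(x,X_j)] = f_k(x)$, which follows from the definition of the projection kernel.

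Next I would compute variances. Because $f_k$ is uniformly bounded (Lemma \ref{lemma_wavelet_properties}(1)), $\mathrm{Var}_f(L_n) = O(1/n)$. For the degenerate part, orthonormality of the wavelet basis gives
\begin{equation*}
\Ef\bigl[H^2(X_1,X_2)\bigr] \;=\; \Ef[K_k^2(X_1,X_2)] - \|f_k\|_{L_2(f)}^2 \;\asymp\; k,
\end{equation*}
so $\mathrm{Var}_f(D_n) \asymp k/n^2$. Since $k(\beta)=n^{2/(1+4\beta)} \gg n$ for $\beta<\tfrac14$, $D_n$ dominates $L_n$ in $L^2$, giving the claimed order of the variance and reducing the CLT to one for $D_n$ alone (the contribution of $L_n/\sqrt{\mathrm{Var}_f(U_n^k)}$ vanishes in probability and a standard CLT handles $L_n$ on its own scale if needed).

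For the limit law of $D_n/\sqrt{\mathrm{Var}_f(D_n)}$, I would invoke Hall's CLT for degenerate $U$-statistics, which requires
\begin{equation*}
\frac{\Ef[G_k^2(X_1,X_2)] + n^{-1}\Ef[H^4(X_1,X_2)]}{\bigl\{\Ef[H^2(X_1,X_2)]\bigr\}^2} \;\longrightarrow\; 0,
\end{equation*}
with $G_k(x,y)=\Ef[H(X_1,x)H(X_1,y)]$. Using the compact support of the wavelets, $|\psi_{k,l}|\leq C\sqrt{k}$, and the fact that at most $O(1)$ bases $\psi_{k,l}$ are nonzero at a given point, a counting argument of the type already used in the proof of Theorem \ref{theorem_fcube_moment} shows $\Ef[H^4]=O(k^3)$ and $\Ef[G_k^2]=O(k^2)$. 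The ratio above is then $O(1/k) + O(k/n)$, which tends to zero whenever $k\to\infty$ and $k=o(n^2)$, both of which hold for $\beta\in(0,1/4)$. Slutsky's lemma then delivers the stated convergence. The main obstacle is the moment arithmetic in the last step: verifying $\Ef[G_k^2]=o(k^2)$ requires exploiting the near-diagonal support structure of $H$ coming from the compact support of $\psi_{k,l}$, which is precisely the mechanism used repeatedly in Section 5 of the paper; the rest is routine.
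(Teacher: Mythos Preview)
Your decomposition into linear and degenerate parts is correct, as is the variance bookkeeping showing $D_n$ dominates. The gap is in the last step. Hall's sufficient condition, exactly as you wrote it, requires
\[
\frac{n^{-1}\Ef[H^4]}{(\Ef[H^2])^2}\longrightarrow 0,
\]
and with $\Ef[H^4]\asymp k^3$, $\Ef[H^2]\asymp k$ this ratio is $k/n$, not $k/n^2$. In the regime of the paper, $\beta<1/4$, one has $k(\beta)=n^{2/(1+4\beta)}\gg n$, so $k/n\to\infty$ and Hall's condition \emph{fails}. (Your own sentence ``tends to zero whenever $k=o(n^2)$'' does not follow from $O(k/n)$; there is a factor of $n$ missing somewhere in your arithmetic, and separately the claim $\Ef[G_k^2]=O(k^2)$ should be $O(k)$ to match your $O(1/k)$.)

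The lemma is still true, but you need a sharper CLT than Hall's crude fourth-moment bound. One route is de~Jong's criterion: verify directly that $\Ef[D_n^4]/(\Ef[D_n^2])^2\to 3$, which amounts to showing that the ``four identical pairs'' contribution $n^2\Ef[H^4]\asymp n^2k^3$, the ``4-cycle'' contribution $\asymp n^4 k$, and the 3-index contributions $\asymp n^3 k^2$ are all $o(n^4k^2)$; each of these holds precisely when $k=o(n^2)$. This uses the near-diagonal structure of $H$ in a finer way than the single bound $\Ef[H^4]=O(k^3)$. The paper itself does not give a proof but cites \cite{rltv2015}, whose argument proceeds via the binning/conditioning technique you see in the proof of Theorem~\ref{theorem_fcube_moment}: condition on the multinomial cell counts, apply a CLT to the conditionally independent within-cell pieces, and control the between-cell part separately.
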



We will need the following lemma about tails of second order U-statistics. 

\begin{lemma}\label{lemma_exponential_inequality_u_statistics}
	Let,
	\be
	H(X,Y)&=K_{*}(X,Y)-K_0(X,Y)-\mathbb{E}_{Y}(K_{*}(X,Y)
	-K_0(X,Y))\\
	&-\mathbb{E}_{X}(K_{*}(X,Y)-K_0(X,Y))+\Ef\left(K_{*}(X,Y)-K_0(X,Y)\right)
	\ee.
	Then the following exponential inequality holds for every $t>0$ and a deterministic constant $C>0$.
	$$\Pf\left[|\sum_{i \neq j}{H(X_i,X_j)}|\geq C(A_1\sqrt{t}+A_2 t^2+A_3 t^{\frac{3}{2}}+A_4 t^2)\right]\leq 5.6 e^{-t},$$
	where
	$$A_1^2=n(n-1)\Ef[H^2(X_1,X_2)],$$
	\be
	A_2&=\sup\left\{\begin{array}{c}|\Ef\left[\sum_{i=1}^n\sum_{j=1}^{i-1}{H(X_1,X_2)a_i(X_1)b_j(X_2)}\right]|:\\\Ef\left(\sum_{i=1}^n{a_i^2(X_1)}\right)\leq 1, \Ef\left(\sum_{j=1}^n{b_j^2(X_2)}\right)\leq 1\end{array} \right\},
	\ee
	$$A_3^2=n \sup_x\{\mathbb{E}_{f_{X_2}}(H^2(x,X_2))\},$$
	$$A_4=\sup_{x,y}|H(x,y)|.$$
\end{lemma}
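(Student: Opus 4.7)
\textbf{Plan for Lemma \ref{lemma_exponential_inequality_u_statistics}.} The plan is to recognize this as a direct application of the by-now-classical exponential inequality for completely degenerate (canonical) U-statistics of order two with bounded kernel, due to Giné, Latała, and Zinn \cite{gine2000exponential} in one form and Houdré and Reynaud-Bouret \cite{houdre2003exponential} in the form that produces exactly the four parameters $A_1,A_2,A_3,A_4$ with the absolute constant $5.6$. The first step is to verify that $H$ satisfies the hypotheses of that inequality: by construction $H$ is the pure degenerate part of the Hoeffding decomposition of $K_*(X,Y)-K_0(X,Y)$, so $\mathbb{E}[H(X,Y)\mid X]=0$ and $\mathbb{E}[H(X,Y)\mid Y]=0$ almost surely, and by the sup-bound on the projection kernels $K_0,K_*$ (which is linear in their dimension), $H$ is bounded.

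Having established these two structural properties, I would then invoke the Houdré--Reynaud-Bouret inequality (Theorem 3.4 of \cite{houdre2003exponential}) for the degenerate U-statistic $\sum_{i\neq j} H(X_i,X_j)$. That theorem says that for every $t>0$,
\begin{equation*}
\Pf\Bigl[\,\Bigl|\sum_{i\neq j} H(X_i,X_j)\Bigr| \geq C\bigl(C_1\sqrt{t}+C_2\,t+C_3\,t^{3/2}+C_4\,t^{2}\bigr)\Bigr] \leq 5.6\,e^{-t},
\end{equation*}
where $C_1,C_2,C_3,C_4$ are precisely the four ``variance-like'' quantities: the Wald (weak) variance $\sqrt{n(n-1)\mathbb{E}_f H^2}$, the weak-operator (bilinear form) norm, the conditional variance $\sqrt{n\sup_x \mathbb{E} H^2(x,\cdot)}$, and the uniform bound $\|H\|_\infty$, which match the definitions of $A_1,A_2,A_3,A_4$ in the statement verbatim.

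The last step is bookkeeping: the four quantities $A_1$--$A_4$ in the lemma are defined to coincide with $C_1$--$C_4$ in \cite{houdre2003exponential}; in particular $A_2$ is stated as the supremum of $\mathbb{E}_f[\sum_{i,j}H(X_1,X_2)a_i(X_1)b_j(X_2)]$ over unit balls in $\ell^2(L^2)$, which is the standard dual characterization of the operator norm appearing in the Houdré--Reynaud-Bouret bound. After this identification the stated inequality is immediate with the quoted constant $5.6$; absorbing the Houdré--Reynaud-Bouret universal constant into the deterministic $C>0$ of the lemma statement completes the proof.

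The only genuinely non-routine aspect is the matching of $A_2$ with the correct operator-norm quantity from the reference; once the four Hoeffding-type parameters are correctly identified, no further work beyond citation is required, so the ``main obstacle'' is really just careful parameter identification rather than any new analytic estimate.
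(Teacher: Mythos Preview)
Your proposal is correct and matches the paper's approach exactly: the paper's proof is a single sentence stating that the inequality follows directly from Theorem~3.4 of \cite{houdre2003exponential}. Your additional verification that $H$ is canonical (degenerate) and bounded, and your explicit parameter matching, are appropriate elaborations of what the paper leaves implicit.
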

\begin{proof}
	This follows  directly from Theorem 3.4 of \cite{houdre2003exponential}.
\end{proof}
We now estimate the terms $A_1, A_2, A_3, A_4$ in the following lemma.

\begin{lemma}\label{lemma_evaluation_A1_to_A4}
	For kernels based on compactly supported wavelet bases, there exists deterministic constant $C_{\psi_0}$ (depending on $\psi_0$ and $\|f\|_{\infty}$) such that,
	$$A_1^2 \leq C_{\psi_0}^2 n(n-1)(k_{*}+k_0), A_2 \leq C_{\psi_0} n, A_3^2 \leq C_{\psi_0}n(k_{*}+k_0), A_4 \leq  C_{\psi_0}(k_{*}+k_0).$$
\end{lemma}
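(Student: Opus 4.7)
The plan is to bound each of $A_1,A_2,A_3,A_4$ by exploiting the orthogonal projection structure of $K_0$ and $K_*$ together with the compact support of the parent wavelets. The recurring tools will be the reproducing identity $\int K_k^2(x,y)\,dy=K_k(x,x)$, the trace identity $\iint K_k^2=k$, the pointwise bound $|K_k(x,y)| \leq C_{\psi_0}k$ (since at any fixed $x$ only $O(1)$ of the $\psi_{k,l}$ are nonzero and each is $O(\sqrt{k})$), and the mapping property $\|K_k f\|_\infty \leq C_{\psi_0}\|f\|_\infty$ recorded in Lemma \ref{lemma_wavelet_properties}. Throughout, the Hoeffding-type centerings that make up $H$ are dominated in $L^p$ norms by the uncentered kernel up to a universal constant, so I can consistently reduce to estimates on $K_*-K_0$.

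I would first handle $A_4$, $A_3$, and $A_1$, which are essentially routine. For $A_4$ the pointwise bound gives $|K_*(x,y)-K_0(x,y)|\leq C_{\psi_0}(k_*+k_0)$, while the three centering terms in $H$ are each at most $C_{\psi_0}\|f\|_\infty$ by the mapping property, establishing the claim. For $A_3$ I would combine $\mathbb{E}_Y H^2(x,Y)\leq C\,\mathbb{E}_Y(K_*(x,Y)-K_0(x,Y))^2$ with $\mathbb{E}_Y K_k^2(x,Y)\leq\|f\|_\infty K_k(x,x)\leq C_{\psi_0}\|f\|_\infty k$, which immediately gives $A_3^2\leq C_{\psi_0} n(k_*+k_0)$. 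The bound for $A_1$ is analogous via the trace identity: $\mathbb{E} H^2\leq C\|f\|_\infty^2 \iint(K_*-K_0)^2 \leq C_{\psi_0}^2(k_*+k_0)$, using that $K_0$ and $K_*$ are nested orthogonal projections.

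The hard part will be $A_2$, and my plan is to identify the supremum with an operator norm. Viewing $H$ as the kernel of an operator $T_H:L^2(f)\to L^2(f)$ via $(T_H g)(y)=\int H(x,y)\,g(x)\,f(x)\,dx$, one has for any admissible $(a_i),(b_j)$ and any pair $i<j$
$$\big|\mathbb{E}\,H(X_i,X_j)\,a_i(X_i)\,b_j(X_j)\big| \leq \|T_H\|_{\mathrm{op}}\,\|a_i\|_{L^2(f)}\,\|b_j\|_{L^2(f)}.$$
Summing and applying Cauchy--Schwarz twice with the unit-energy constraints $\mathbb{E}\sum_i a_i^2\leq 1$, $\mathbb{E}\sum_j b_j^2\leq 1$ produces $\sum_{i<j}\|a_i\|\|b_j\|\leq n$. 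It then remains to show $\|T_H\|_{\mathrm{op}}=O(1)$, which I would argue as follows: $K_*$ and $K_0$ are $L^2$-Lebesgue projections (operator norm $1$); under the assumptions $f_{\min}\geq M_*>0$ and $\|f\|_\infty\leq M^*$, the weighted space $L^2(f)$ is equivalent to $L^2$-Lebesgue with constants depending only on $M_*,M^*$; and the Hoeffding centering that produces $H$ from $K_*-K_0$ is a bounded linear operation, inflating the operator norm by at most a universal multiplicative constant. Combining these steps gives $A_2\leq C_{\psi_0} n$ and completes the plan.
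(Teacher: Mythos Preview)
Your argument is correct and reaches the same bounds, but it is organized differently from the paper's (whose printed proof merely cites Proposition~2 of \cite{bull2013adaptive}; the source also contains, after \verb|\end{document}|, a detailed draft computation). That draft argument expands $H$ in the wavelet basis as $\sum_l(\psi_{k,l}(X_1)-\alpha_{k,l})(\psi_{k,l}(X_2)-\alpha_{k,l})$ and bounds each of $A_1$--$A_4$ by explicit term-by-term calculation, repeatedly invoking the compact-support facts $\sup_l|\alpha_{k,l}|\lesssim k^{-1/2}$ and $\sum_l|\psi_{k,l}(x)|\lesssim k^{1/2}$. Your route is more structural: for $A_1,A_3,A_4$ you reduce to the reproducing identity $\int K_k^2(x,y)\,dy=K_k(x,x)$, the trace $\iint K_k^2=k$, and the pointwise bound $|K_k|\lesssim k$; for $A_2$ you identify the supremum with $n$ times the $L^2(f)\!\to\!L^2(f)$ operator norm of the integral operator with kernel $H$, and then observe that this norm is $O(1)$ because $K_0,K_*$ are Lebesgue orthogonal projections and $L^2(f)$ is equivalent to Lebesgue $L^2$ under the standing bounds on $f$. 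This operator-norm treatment of $A_2$ is cleaner than the paper's three-term expansion and explains conceptually why $A_2$ does not grow with $k$. One small caveat: the pointwise inequality $\mathbb{E}_Y H^2(x,Y)\le C\,\mathbb{E}_Y(K_*-K_0)^2(x,Y)$ you invoke for $A_3$ is not literally valid for every $x$ (the $O(1)$ centering terms need not vanish where the kernel difference does); however, since those terms are uniformly bounded while the main term is of order $k$, the bound on $\sup_x\mathbb{E}_Y H^2(x,Y)$ follows just the same.
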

\begin{proof}
	The proof follows along the same line of arguments as those laid down in Proposition 2 of \cite{bull2013adaptive}.
\end{proof}

\bibliographystyle{imsart-nameyear}
\bibliography{biblio_adaptation}

\end{document}